\theoremstyle{plain}
\newtheorem{thm}[equation]{Theorem}
\newtheorem{prop}[equation]{Proposition}
\newtheorem{lem}[equation]{Lemma}
\newtheorem{cor}[equation]{Corollary}
\theoremstyle{definition}
\newtheorem{defn}[equation]{Definition}
\newtheorem{rem}[equation]{Remark}
\def\defeq{\overset{\mathrm{def}}=}
\def\goth{\mathfrak}
\def\Ga{\Gamma}
\def\cF{\mathcal{F}}
\def\FF{\mathbb{F}}
\def\GG{\mathbb{G}}
\def\SS{\mathbb{S}}
\def\Z{\mathbb{Z}}
\def\ZZ{{{\Z}}}
\def\Aut{\mathrm{Aut}}
\def\End{{{\mathrm{End}}}}
\def\Ext{\mathrm{Ext}}
\def\Gal{\mathrm{Gal}}
\def\Hom{\mathrm{Hom}}
\def\Tor{\mathrm{Tor}}
\def\colim{\mathrm{colim}}
\def\holim{\mathrm{holim}}
\def\lim{\mathrm{lim}}
\def\longr{{{\longrightarrow\ }}}
\def\map{{{\mathrm{map}}}}
\def\det{{{\mathrm{det}}}}
\numberwithin{equation}{section}
\def\ZZ{{{\mathbb{Z}}}}
\def\FF{{{\mathbb{F}}}}
\def\mm{{{\mathfrak{m}}}}
\def\Hom{{{\mathrm{Hom}}}}
\def\Ext{{{\mathrm{Ext}}}}
\def\WW{{{\mathbb{W}}}}
\def\KK{{\mathbb{K}}}
\def\SS{{{\mathbb{S}}}}
\def\kappabar{\overline{\kappa}} 
\date{}
\begin{document}
\title[Topological resolutions in $K(2)$-local homotopy theory]{Topological resolutions in $K(2)$-local
homotopy theory at the prime $2$}
\address{School of Mathematics, 
Institute for Advanced Study, Princeton, NJ 08540, U.S.A.}
\email{ibobkova@math.ias.edu}

\address{Department of Mathematics, 
Northwestern University, Evanston, IL 60208, U.S.A.}
\email{pgoerss@math.northwestern.edu}  
\begin{abstract}
We provide a topological duality resolution for the spectrum $E_2^{h\SS_2^1}$, which itself
can be used to build the $K(2)$-local sphere. The resolution is built from spectra of 
the form $E_2^{hF}$ where $E_2$ is the Morava spectrum for the formal group
of a supersingular curve at the prime $2$ and $F$ is a finite subgroup of the automorphisms 
of that formal group. The results are in complete analogy with
the resolutions of \cite{GHMR} at the prime $3$, but the methods are of necessity
very different.  As in the prime $3$ case, the main difficulty is in identifying the top
fiber; to do this, we make calculations using
Henn's centralizer resolution.
\end{abstract}

\author{Irina Bobkova}
\author{Paul G. Goerss}
\thanks{The second author was partially supported by the National Science Foundation. A portion of the work
was done at the Hausdorff Institute of Mathematics, during the Trimester Program ``Homotopy Theory, 
Manifolds, and Field Theories.'' }

\maketitle

Chromatic stable homotopy theory uses the algebraic geometry of smooth one-parameter formal groups
to organize calculations and the search for large scale phenomena. In particular, the chromatic  filtration on
the category of the $p$-local finite spectra corresponds to the height filtration for formal groups. The layers
of the chromatic filtration are given by localization with respect to the Morava $K$-theories $K(n)$, with
$n \geq 0.$ Thus, to understand the homotopy type of a finite spectrum $X$ we begin by addressing $L_{K(n)}X$ for 
all prime numbers  $p$ and all $0 \leq n < \infty.$ A useful  and inspirational 
guide to this point of view can be found in the table in section 2 of \cite{HG}. 

If $n=0,$ $K(0)=H\mathbb{Q}$ and $L_0X$ is the rational homotopy type of $X.$
For $n\geq 1,$ the basic computational tool in $K(n)$-local homotopy theory is the $K(n)$-local
Adams-Novikov Spectral Sequence
$$
H^s(\GG_n,(E_n)_tX) \Longrightarrow \pi_{t-s}L_{K(n)}X.
$$
Here $\GG_n$ is the automorphism group of a pair $(\FF_q,\Gamma_n)$ where $\FF_q$ is a finite field
of characteristic $p$ and $\Gamma_n$ is a chosen formal group of height $n$ over $\FF_q$. Then
$E_n$ is the Morava (or Lubin-Tate) $E$-theory defined by $(\FF_q,\Gamma_n)$. We will give
more details and make precise choices in section \ref{ch:Knstuff}. 

First suppose $p$ is large with respect to $n$. (To be precise, we may take $2p-2 > \mathrm{max}\{n^2,2n+2\}$ or
$p > 3 $ if $n$=2.) Then the  Adams-Novikov Spectral Sequence for $X=S^0$
collapses and will have no extensions, so the problem becomes algebraic, although by no means easy. 
See for example, \cite{ShimomuraLarge}, \cite{BehrensRevisited}, or \cite{Lader}, for the case $n=2$ and $p > 3$. 
However, if $p$ is small with respect to $n$, the group $\GG_n$ has finite subgroups of $p$-power order
and the spectral sequence will usually have differentials and extensions, so the problem is no longer 
purely algebraic. At this point, topological resolutions become a useful way to organize the contributions
of the finite subgroups.  The key to unlocking this idea is the Hopkins-Miller theorem, which implies
that $\GG_n$, and hence all of its finite subgroups, act on $E_n$ and that $L_{K(n)}S^0 \simeq E_n^{h\GG_n}$.
See \cite{DH} for this and more. 

The prototypical example is at $n=1$ and $p=2$. Adams and Baird \cite{B}, and Ravenel \cite{R} showed that 
here we have a fiber sequence
$$
L_{K(1)}S^0 \rightarrow KO \xrightarrow{\psi^3-1} KO 
$$
where $KO $ is 2-complete real $K$-theory. For a suitable choice of a height one formal group $\Gamma_1$ we can 
take $E_1 = K$, where $K$ is $2$-complete complex $K$-theory. Then
 $\GG_1 = Aut(\Gamma_1, \FF_2) \cong \ZZ_2^\times$ is the units in the $2$-adic integers, and $C_2 = \{\pm 1\}
\subseteq \ZZ_2^\times$ acts through complex conjugation. We can then rewrite
this fiber sequence as
\[
L_{K(1)}S^0 = E_1^{h\mathbb{G}_1}\longr E_1^{hC_2} \xrightarrow{\psi^3-1} E_1^{hC_2},
\]
and $\psi^3$ is a topological generator of $\ZZ_2^{\times}/\{\pm 1\}\simeq \ZZ_2$. 

For higher heights the topological resolutions will not be simple fiber sequences, but finite towers
of fibrations with the successive fibers built from  $E^{hF_i}$ where $F_i$ runs over various
finite subgroups of $\GG_n$.  

In~\cite{GHMR} the authors generalized the fiber sequence of the $K(1)$-local case to the case $n=2$ and $p=3$. 
One way to say what they proved is the following. Let us write $E=E_2$ to simplify notation.
First we have a split short exact sequence of groups
$$
\xymatrix{
\{1\} \to \GG_2^1 \rto & \GG_2 \rto^-N & \ZZ_3 \to \{1\}
}
$$
where $N$ is obtained from a determinant (See (\ref{norm-defined})) and hence a fiber sequence
\[
L_{K(2)}S^0 \longr E^{h\mathbb{G}_2^1}  \xrightarrow{\psi-1} E^{h\mathbb{G}_2^1},
\]
where $\psi$ is any element of $\mathbb{G}_2$ which maps by $N$ to a topological generator of $\mathbb{Z}_3$.
Then, second, there exists a resolution of $E^{h\GG_2^1}$
$$
E^{h\mathbb{G}_2^1} \to E^{hG_{24}} \to \Sigma^8 E^{hSD_{16}} \to \Sigma^{40} E^{hSD_{16}} \to \Sigma^{48}
E^{hG_{24}}
$$
Here {\it resolution} means each successive composition is null-homotopic and all possible Toda brackets
are zero modulo indeterminacy; thus, the sequence refines to a tower of fibrations
\begin{equation*} \label{eq:3tower}
\xymatrix{
E^{h\mathbb{G}_2^1} \ar[r] & X_2 \ar[r] & X_1 \ar[r] &E^{hG_{24}}\\
\Sigma^{45}E^{hG_{24}} \ar[u] & \Sigma^{38}E^{hSD_{16}}\ar[u] & \Sigma^{7}E^{hSD_{16}}\ar[u]
}
\end{equation*}

The maximal finite subgroup of $\GG_2$ of $3$-power order is a cyclic group $C_3$ of order $3$;
it is unique up to conjugation in $\GG_2^1$.
The group $G_{24}$ is the maximal finite subgroup of $\GG_2$ containing $C_3$. The subgroup $SD_{16}$ is
the semidihedral group of order $16$. Because of the symmetry of this resolution, and because $\GG_2^1$ is
a virtual Poincar\'e duality group of dimension $3$, this is called a {\it duality resolution}. This resolution
and related resolutions were instrumental in exploring the $K(2)$-local category at primes $p > 2$.
See \cite{GHMV1}, \cite{HKM}, \cite{BehrensQ}, \cite{GHMCSC}, \cite{GHMRPic}, \cite{bcdual}, and \cite{Lader}. 
The latter paper makes a thorough exploration of what happens at $p > 3$. 

The main theorem of this paper provides an analog of the duality resolution at $n=2$ and $p=2$. The
prime $2$ is much harder for a number of reasons. First, the maximal finite $2$-subgroup of $\GG_2$ is
not cyclic, but isomorphic to $Q_8$, the quaternion group of order $8$. Second, every finite subgroup
of $\GG_2$ that we consider contains the central $C_2 = \{\pm 1\}$ of $\GG_2$; therefore, the homotopy groups
of the relevant fixed point spectra $E^{hF}$ are much more complicated. This means that the strategy of proof
of \cite{GHMR} won't work and we need to find another way. 

We now state our main result. As our chosen formal group $\Gamma_2$ we will use the formal group
of a supersingular elliptic curve over $\FF_4$. The curve will be defined over $\FF_2$, so that
$\GG_2 \cong \SS_2 \rtimes \Gal(\FF_4/\FF_2)$ where $\SS_2 = \Aut(\Gamma_2/\FF_4)$ is
the group of automorphisms of $\Gamma_2$ over $\FF_4$. Once again we have fiber sequence
\[
L_{K(2)}S^0 \longr E^{h\mathbb{G}_2^1}  \xrightarrow{\psi-1} E^{h\mathbb{G}_2^1}.
\]
We have $E^{h\mathbb{G}_2^1} = (E^{h\mathbb{S}_2^1})^{h\Gal}$ where $\Gal = \Gal(\FF_4/\FF_2)$
and $\SS_2^1 = \SS_2 \cap \GG_2^1$. For many
computational applications, the difference between $E^{h\GG_2^1}$ 
and $E^{h\SS_2^1}$ is innocuous. See Lemma \ref{more-split}.
Then our main result is this:

\noindent\begin{thm}\label{main} There exists a resolution of $E^{h\SS_2^1}$ in the $K(2)$-local category at the prime 2
$$
E^{h\mathbb{S}_2^1} \to E^{hG_{24}} \to E^{hC_6} \to \Sigma^{48} E^{hC_6} \to \Sigma^{48}E^{hG_{24}}\ .
$$
\end{thm}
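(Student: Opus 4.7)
The plan is to follow the two-step strategy pioneered in \cite{GHMR}: first construct an algebraic resolution of length $3$ (matching the virtual cohomological dimension of $\SS_2^1$)
\[
0 \to P_3 \to P_2 \to P_1 \to P_0 \to \ZZ_2 \to 0
\]
of the trivial module $\ZZ_2$ over the completed group ring $\ZZ_2[[\SS_2^1]]$, with each $P_i$ a permutation module $\ZZ_2[[\SS_2^1/F]]$ for $F \in \{G_{24}, C_6\}$ (and $P_3$ twisted by a character accounting for $\Sigma^{48}$); then realize this resolution topologically as a tower of fibrations whose associated graded is the desired sequence of fixed-point spectra. The shift at the top should emerge as the topological incarnation of the dualizing character of the virtual Poincar\'e duality group $\SS_2^1$, computed from the determinant representation on the finite subgroups.

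On the algebraic side, I would fix conjugacy-class representatives of $G_{24}$ and $C_6$ inside $\SS_2^1$, build the differentials by analyzing the relevant double cosets, and verify exactness by comparison with Henn's centralizer resolution of $\ZZ_2$; the same comparison should also pin down the character twist. To then realize this algebraic data topologically, I would invoke Hopkins-Miller obstruction theory in the $K(2)$-local category. Existence, uniqueness, and null-composition of the maps $E^{hF_i} \to \Sigma^{k_i}E^{hF_{i+1}}$ are governed by $\Ext$ groups between the associated Morava modules, which reduce via Shapiro's lemma to continuous cohomology of the finite subgroups with coefficients in $(E_2)_*$. These are tractable because $\pi_* E^{hG_{24}}$ and $\pi_* E^{hC_6}$ are essentially known. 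Vanishing of the Toda brackets needed to refine the algebraic resolution into an honest tower of fibrations is checked in the same framework.

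The main obstacle, as flagged in the abstract, is the identification of the top fiber. After realizing the first three maps and forming iterated fibers one obtains a spectrum $C$, and the theorem reduces to proving $C \simeq \Sigma^{48}E^{hG_{24}}$. Unlike at $p=3$, the central subgroup $\{\pm 1\} \subset \SS_2$ forces $\pi_* E^{hG_{24}}$ to be intricate, with a dense web of differentials and exotic extensions in the homotopy fixed point spectral sequence, so the direct $\pi_*$-level comparison used in \cite{GHMR} is not available. The strategy will instead be to apply Henn's centralizer resolution against the partial tower to compute $(E_2)_* C$ as a Morava module, recognize it abstractly as the Morava module of $\Sigma^{48} E^{hG_{24}}$, and then construct a map realizing this isomorphism; $K(2)$-local rigidity then upgrades the isomorphism of Morava modules to an equivalence of spectra.
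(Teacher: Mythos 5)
Your outline of the first half (the length-three algebraic resolution by permutation modules, its topological realization, vanishing of compositions and Toda brackets) is broadly the route the paper takes, but two points in your plan for the hard step are off. First, in the algebraic duality resolution there is no character twist on the top term: it is the honest permutation module $\ZZ_2[[\SS_2^1/G_{24}']]$ for a conjugate $G_{24}'=\pi G_{24}\pi^{-1}$, and the shift $\Sigma^{48}$ is invisible at the level of Morava modules. Indeed $\Delta \in H^0(G_{48},E_{24})$ is invertible, so $E_\ast\Sigma^{48}E^{hG_{24}}\cong E_\ast E^{hG_{24}}\cong E_\ast\bigl(\Sigma^{24i}E^{hG_{48}}\vee\Sigma^{24j}E^{hG_{48}}\bigr)$ for all $i,j$ (Remark \ref{Goodfor48}). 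The paper explicitly states it has no conceptual explanation for the $48$, so deriving it from the dualizing character of the virtual Poincar\'e duality group is not something you can take for granted; it is precisely what must be proved by computation.

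This is where your plan for the top fiber has a genuine gap: computing $(E_2)_\ast C$ as a Morava module is immediate from the construction of the tower and gives $E_\ast E^{hG_{24}}$, but by the above this does not distinguish $\Sigma^{48}E^{hG_{24}}$ from $E^{hG_{24}}$, nor from wedges $\Sigma^{24i}E^{hG_{48}}\vee\Sigma^{24j}E^{hG_{48}}$ with $i\not\equiv j$ mod $8$, so ``recognizing the Morava module abstractly'' plus rigidity cannot produce the suspension. Moreover, rigidity only upgrades a \emph{map} inducing an $E_\ast$-isomorphism to an equivalence; constructing such a map is the whole difficulty, and the Hurewicz map $\pi_0F(\Sigma^{48}E^{hG_{24}},C)\to\Hom$ of Morava modules is far from an isomorphism here because $G_{24}$ has abundant higher cohomology (unlike the $C_6$-targets used to realize the truncated resolution). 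What the paper actually does is homotopy-theoretic: it compares the duality tower with Henn's centralizer tower \emph{in homotopy}, using the sparseness of $\pi_\ast E^{hC_2}$, $\pi_\ast E^{hC_4}$, $\pi_\ast E^{hC_6}$ in degrees $45$--$47$ to produce classes in $\pi_{192k+48}C$ detected by $f(j)\Delta^{8k+2}\in H^0(G_{24},E_{192k+48})$, then uses a $d_3$-differential argument with $\mu$, $\eta$, and $\kappabar$-multiplications to force $f(0)\not\equiv 0$ mod $(2,j)$; finally a mapping-space version of the same comparison produces actual maps $\Sigma^{48}E^{hG_{48}}\to C$, and \emph{two} such classes spanning $\FF_4$ over $\FF_2$ are needed (to exclude the wedge possibilities), assembled by a determinant/Nakayama argument over $E^0[[K]]$ and Galois descent into the equivalence $\Sigma^{48}E^{hG_{24}}\simeq C$. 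Without some substitute for this detection of permanent cycles in degrees $192k+48$, your argument cannot identify the top fiber.
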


The spectrum $E^{hC_6}$ is $48$-periodic, so $E^{hC_6}\simeq \Sigma^{48}E^{hC_6}$; this suspension is there
only to emphasize the symmetry in the resolution. 

Once again resolution means each successive composition is null-homotopic and all possible Toda brackets
are zero modulo indeterminacy; thus, the sequence  again refines to a tower of fibrations. 
This result has already had applications: it is an ingredient in Agn\`es Beaudry's analysis of the Chromatic
Splitting Conjecture at $p=n=2$. See \cite{beaudrychromatic}.

The apparent similarity of Theorem \ref{main}  with the prime $3$ analog is tantalizing, especially the 
suspension factor on the last term, but we as yet have no conceptual explanation. The proof at the prime
$2$ can be adapted to the prime $3$ but in both cases it comes down to a very specific, prime dependent
calculation.

A very satisfying feature of chromatic height $2$ is the connection with the theory of elliptic curves. The subgroup
$G_{24} \subseteq \SS_2$ is the automorphism group of our chosen supersingular curve inside the automorphisms 
of its formal group. Appealing to Strickland \cite{Strick} we can use this to get formulas for the action of 
$G_{24}$ on $E_\ast$, a necessary beginning to group cohomology calculations. Furthermore, we know that
$$
E^{hG_{48}} \simeq (E^{hG_{24}})^{h\Gal} \simeq L_{K(2)}Tmf
$$
where $Tmf$ is the global sections of the sheaf of $E_\infty$-ring spectra on the compactified stack
of generalized elliptic curves provided by Hopkins and Miller. See \cite{tmfbook}. Similarly, $E^{hC_6}$ is
the localization of global sections of the similar sheaf for elliptic curves with a level $3$ structure. See \cite{MR}. We
won't need anything like the full power of the Hopkins-Miller theory here, although we do use some of the 
calculations that arise from this point of view. See Section \ref{ch:coho}. 

We will prove Theorem \ref{main} in three steps.

First we prove, in Section \ref{ch:Tower}, that there exists a resolution
$$
E^{h\mathbb{S}_2^1} \to E^{hG_{24}} \to E^{hC_6} \to E^{hC_6} \to X
$$
where
$$
E_\ast X \cong E_\ast E^{hG_{24}}
$$
as twisted $\GG_2$-modules. 
This resolution and the necessary algebraic preliminaries were announced in \cite{HennRes}, and grew
out of the work surrounding \cite{GHMR}. The algebraic preliminaries are discussed in detail in
\cite{beaudry}. 

Second, in section \ref{ch:homotopy} we examine the Adams-Novikov Spectral Sequence
$$
H^\ast (G_{24},E_\ast) \Longrightarrow \pi_\ast X.
$$
Using a comparison of our resolution with a second resolution, due
to Henn, we show, roughly, that certain classes $\Delta^{8k+2} \in H^0(G_{24},E_{192k + 48})$ are permanent
cycles---which would certainly be necessary if our main result is true. The exact result is
in Corollary \ref{describer}. These calculations 
were among the main results in the first author's thesis \cite{thesis} and the key
ideas for the entire project can be found there. This ratifies a comment of Mark Mahowald that
there is a class in $\pi_{45}L_{K(2)}S^0$ which supports non-zero multiplications
by $\eta$ and $\kappabar$ and the only way this could happen is if $\Delta^2$ is a permanent 
cycle. This insight was, we think, the result of long hours of contemplation of the results of
Shimomura and Wang \cite{Shimat2} and, indeed, one reason for this entire project is to find some way to 
catch up with those amazing calculations. We will have more to say about this element
of Mahowald's in Remark \ref{whatisthisclass}.

Third and finally,  in section \ref{ch:mapping} we use a variation of this same comparison argument to produce
the equivalence $\Sigma^{48}E^{hG_{24}} \simeq X$. See Theorem \ref{maintheorem}. At the very 
end we add a remark about the possibility or impossibility of resolutions for $L_{K(2)}S^0$ itself. See
Remark \ref{notsphere}. 

There is a number of sections of preliminaries. Section \ref{ch:Knstuff} provides the usual background
on the $K(n)$-local category as well as some more specific information on mapping spaces. 
Section \ref{ch:coho} pulls together what we need about the homotopy groups of various fixed point spectra. This
draws from many sources, and we try to be complete there.

An essential ingredient in our argument is the existence of the other topological resolution for
$E^{h\mathbb{S}_2^1}$, Henn's centralizer resolution from \S 3.4 of \cite{HennRes}. We have
less control over the maps in this resolution, 
but, as has happened before (\cite{GHMV1}; see also \cite{bcdual}), this resolution provides essential
information needed to solve ambiguities in the duality resolution. It is also much closer to being an
Adams-Novikov-style resolution as it is based on relative homological algebra. We cover some of this
material at the end of section 3.

\subsection*{Acknowledgements} There is a wonderful group of people working on $K(2)$-local homotopy theory
and this paper is part of an on-going dialog within that community. We are grateful in particular to
Agn\`es Beaudry, Charles Rezk, and especially Hans-Werner Henn. Of course, others have helped as well.
In particular, the key idea---to look for
homotopy classes with particularly robust multiplicative properties---is due to Mark Mahowald. We miss 
his insight,  irreplaceable knowledge, and friendship every day. Finally, we would like to give a heartfelt
thanks to the referee, whose thorough and thoughtful report made this a better paper. 
\tableofcontents

\section{Recollections on the $K(n)$-local category}\label{ch:Knstuff}

We begin with the standard material on the $K(n)$-local category, the Morava stabilizer group,
and Morava $E$-theory, also known as Lubin-Tate theory. We then
get specific at $n=2$ and $p=2$, discussing the role of formal groups arising from supersingular
elliptic curves. We review some material on the homotopy type of the spectrum of maps between various fixed point 
spectra derived from Morava $E$-theory and then spell out some details of the $K(n)$-local
Adams-Novikov Spectral Sequence. Finally, we discuss, the role of the Galois group.

\subsection{The $K(n)$-local category} 

Fix a prime $p$ and let $n\geq 1.$ Let $\Gamma_n$ be a formal group of height $n$ over the finite field $\FF_p$ 
of $p$ elements. Then for any finite extension $i:\FF_p \subseteq \FF_q$ of $\FF_p$, we form the
group $\Aut(\Gamma_n/\FF_q)$ of the automorphisms of $i^\ast \Gamma_n$ over $\FF_q$. We fix
a choice of $\Gamma_n$ with the property that any extension $\FF_{p^n} \subseteq \FF_q$ 
gives an isomorphism
\begin{equation}\label{eq:stabilize-autos}
\Aut(\Gamma_n/\FF_{p^n}) \mathop{\longr}^{\cong} \Aut(\Gamma_n/\FF_{q}).
\end{equation}
The usual Honda formal group satisfies these criteria: this has a formal group law which is $p$-typical and with
$p$-series $[p](x)=x^{p^n}$. However, if $n=2$, then the formal group of a supersingular
elliptic curve defined over $\FF_p$ will also do, and this will be our preferred choice at $p=2$.
Define
\begin{equation}\label{morava-stab}
\SS_n = \Aut(\Gamma_n/\FF_{p^n}).
\end{equation}
If we choose a coordinate for $\Gamma_n$, then any element of $\SS_n$ defines a power series
$\phi(x) \in x\FF_{p^n}[[x]],$ invertible under composition, and the assignment $\phi(x) \mapsto \phi'(0)$ defines a map
$$
\SS_n \longr \FF_{p^n}^\times.
$$
This is a split surjection and we 
define $S_n$ to be the kernel of this map; this is the $p$-Sylow subgroup of the profinite group $\SS_n$.
We then get an isomorphism $S_n \rtimes \FF_{p^n}^\times \cong \SS_n$.

Define the (big) {\it Morava stabilizer group} $\GG_n$ as the automorphism group of the pair $(\FF_{p^n},\Ga_n)$.
Since $\Ga_n$ is defined over $\FF_p$, there is an isomorphism
\begin{equation}\label{big-morava-stab}
\GG_n \cong \Aut(\Gamma/\FF_{p^n}) \rtimes \Gal(\FF_{p^n}/\FF_p) = \SS_n \rtimes \Gal(\FF_{p^n}/\FF_p).
\end{equation}
We will often write $\Gal = \Gal(\FF_{p^n}/\FF_p)$ when the field extension is understood.

We next must define Morava $K$-theory. There are many variants, all of which have the same Bousfield
class and define the same localization; we will choose a variant which works well with Morava $E$-theory.
Let $K(n) = K(\FF_{p^n},\Ga_n)$ be the $2$-periodic ring spectrum with homotopy groups
$$
K(n)_\ast = \FF_{p^n}[u^{\pm 1}]
$$
and with associated formal group $\Ga_n$. Here the class $u$ is in degree $-2$. The group
$F_0 = \FF_{p^n}^\times \rtimes \Gal(\FF_{p^n}/\FF_p)$ acts on $K(n)$ and 
$$
\FF_p[v_n^{\pm 1}] \cong (K(n)^{hF_0})_\ast = K(n)^{F_0}_\ast
$$
where $v_n = u^{-(p^n-1)}$. The spectrum $K(n)^{hF_0}$ is thus a more classical version of Morava
$K$-theory.

We will spend a great deal of time working in the $K(n)$-local category and, when doing so, all our
spectra will implicitly be localized. In particular, we emphasize that we often write
$X \wedge Y$ for $L_{K(n)}(X \wedge Y)$, as this is the smash product
internal to the  $K(n)$-local category. 

We now define the Morava spectrum $E=E_n=E(\FF_{p^n},\Ga_n)$. (We will suppress the
$n$ on $E_n$ whenever possible to help ease the notation.) This is the complex
oriented, Landweber exact, $2$-periodic, $E_\infty$-ring spectrum with
\begin{equation}\label{lubin-tate}
E_\ast = (E_n)_\ast \cong \WW[[u_1,\cdots,u_{n-1}]][u^{\pm1 }]
\end{equation}
with $u_i$ in degree $0$ and $u$ in degree $-2$. Here $\WW = W(\FF_{p^n})$ is the ring of Witt
vectors on $\FF_{p^n}$. Note that
$E_0$ is a complete local ring with residue field $\FF_{p^n}$;
the formal group over $E_0$ is a choice of universal deformation
of the formal group $\Ga_n$ over $\FF_{p^n}$. (We will be specific about this choice
at $n=p=2$ below in subsection
\ref{definecurve}.) The group $\GG_n =    \Aut(\Ga_n) \rtimes \Gal(\FF_{p^n}/\FF_p)$
acts on $E=E_n$, by the Hopkins-Miller theorem \cite{GH1} and we have (see Section 1.5) a spectral sequence
for any closed subgroup $F \subseteq \GG_n$,
\begin{equation}\label{ANSS}
H^s(F,E_t) \Longrightarrow \pi_{t-s}E^{hF}.
\end{equation}
We will collectively call these by the name Adams-Novikov Spectral Sequence. See also Lemma
\ref{reduce-to-finite} below. If $F=\GG_n$
itself, then $E^{h\GG_n}\simeq L_{K(n)}S^0$ and we are computing the homotopy groups of
the $K(n)$-local sphere. 

Various subgroups of $\GG_n$ will play a role in this paper, especially at $n=2$ and $p=2$.
The right action of $\Aut(\Ga_n)$ on $\End(\Ga_n)$  defines a determinant map
$\det\colon \SS_n=\Aut(\Ga_n/\FF_{p^n}) \to \ZZ_p^\times$ which extends to a determinant map
\begin{equation}\label{det-defined}
\xymatrix@C=35pt{
\GG_n \cong  \SS_n \rtimes \Gal(\FF_{p^n}/\FF_p)
\rto^-{ \det \times 1} &  \ZZ_{p}^\times \times \Gal(\FF_{p^n}/\FF_p)
\rto^-{p_1} & \ZZ_p^\times.
}
\end{equation}
Define the {\it reduced determinant} (or {\it reduced norm}) $N$ to be the composition
\begin{equation}\label{norm-defined}
\xymatrix{
\GG_n \ar@/_1pc/[rr]_-N \rto^{\det} & \ZZ_p^\times \rto & \ZZ_p^\times/C \cong \ZZ_p.
}
\end{equation}
where $C \subseteq \ZZ_p^\times$ is the maximal finite subgroup. For example, $C = \{\pm 1\}$ if $p=2$.
There are isomorphisms $\ZZ_p^\times/C \cong \ZZ_p$ and we choose one.
Write $\GG_n^1$ for the kernel of $N$, $\SS_n^1 = \SS_n \cap \GG_n^1$, and
$S_n^1 = S_n \cap \GG_n^1$. The map $N:S_n \to \ZZ_p$ is split surjective and we have
semi-direct product decompositions for each of the groups $\GG_n$, $\SS_n$, and $S_n$; for example,
there is an isomorphism
$$
\SS_n^1 \rtimes \ZZ_p \cong \SS_n.
$$
If $n$ is prime to $p$, we can choose a central splitting and the semi-direct product is actually
a product, but that is not the case of interest here.

\subsection{Deformations from elliptic curves}\label{definecurve}
Here we spell out what we need from the theory of elliptic curves at $p=2$; this will give us
a preferred formal group and a preferred universal deformation. Choose $\Gamma_2$ to be the formal group 
obtained from the elliptic curve $C_0$ over $\FF_2$ defined by the Weierstrass equation
\begin{equation}\label{ss-curve}
y^2+y=x^3.
\end{equation}
This is a standard representative for the unique isomorphism class of supersingular curves
over $\overline{\FF}_2$; see \cite{Silverman}, Appendix A. Because $C_0$ is supersingular,
$\Gamma_2$ has height
$2$, as the notation indicates. Following \cite{Strick} let $C$ be the elliptic curve over 
$\WW(\mathbb{F}_4)[[u_1]]$ defined by the Weierstrass equation
\begin{equation}\label{ss-curve-def}
y^2+3u_1xy + (u_1^3-1)y=x^3.
\end{equation}
This reduces to $C_0$ modulo the maximal ideal $\mm = (2,u_1)$; the formal group $G$ of $C$ is a choice of 
the universal deformation of $\Gamma_2$. 

Again turning to \cite{Silverman}, Appendix A, we have
\begin{equation}\label{ss-curve1}
G_{24} \defeq \Aut(C_0/\FF_4) \cong Q_8 \rtimes \FF_4^\times
\end{equation}
where $\FF_4^\times \cong C_3$ acts on $Q_8$ as the $3$-Sylow subgroup of $\Aut(Q_8)$. Define
\begin{equation}\label{ss-curve2}
G_{48} \defeq \Aut(\FF_4,C_0) \cong \Aut(C_0/\FF_4) \rtimes \Gal(\FF_4/\FF_2).
\end{equation}
Since any automorphism of the pair $(\FF_4,C_0)$ induces an automorphism of  the
pair $(\FF_4,\Ga_2)$ we get a map $G_{48} \to \GG_2$. This map is an injection 
and we identify $G_{48}$ with its image. 

\begin{rem}\label{rem:levelstructures} Let  $C_0[3]$ be the subgroup scheme of $C_0$ consisting of points of order 
$3$;  over  $\FF_4$, this becomes abstractly isomorphic to $\Z/3 \times \Z/3$.  The group $G_{48}$ acts 
linearly on $C_0[3]$ and choosing  a basis for the $\FF_4$-points of $C_0[3]$ determines an isomorphism 
$G_{48} \cong GL_2(\Z/3)$. This restricts to an isomorphism group $G_{24} \cong Sl_2(\Z/3)$.
\end{rem}

\begin{rem}\label{the-subs} The following subgroups will play an important role in this
paper.
\begin{enumerate}

\item $C_2 = \{\pm 1\} \subseteq Q_8$;

\item $C_6 = C_2 \times \FF_4^\times$;

\item $C_4$, any of the subgroups of order $4$ in $Q_8$;

\item $G_{24}$ and $G_{48}$ themselves.
\end{enumerate} 
The subgroup $C_4$ is not unique, but it is unique up to conjugation in $G_{24}$ and in $\mathbb{G}_{2}$.
In particular, the homotopy type of $E^{hC_4}$ is well-defined.
\end{rem}

\begin{rem}\label{all-the-splittings} We have been discussing $G_{24}$ as a subgroup of $\SS_2$,
but it can also be thought of as a quotient. 
Inside of $\SS_2$ there is a normal torsion-free pro-$2$-subgroup $K$ which has the property that the composition 
$$
G_{24} \longr \SS_2 \longr \SS_2/K
$$
is an isomorphism. Thus we have a decomposition $K \rtimes G_{24} \cong \SS_2$.  See \cite{beaudry} for details. The group 
$K$ is a Poincar\'e duality group of dimension $4$.
\end{rem}

\def\mm{{\goth m}}

\subsection{The functor $E_\ast X$} We define
$$
E_\ast X = \pi_\ast L_{K(n)}(E \wedge X).
$$
Despite the notation, $E_\ast(-)$ is not a homology theory, as it does not take arbitrary wedges to sums,
but it is our most sensitive algebraic invariant on the $K(n)$-local category. 

Here are some properties of $E_\ast (-)$. See \cite{HvStr}, \S 8 and Appendix A for more
details. Let $\mm \subseteq E_0$ be the
maximal ideal and  $L = L_0$ be the 0th derived functor of completion at $\mm$. Recall that an $E_0$-module is
$L$-complete if the natural map $M \to L(M)$ is
an isomorphism. If $M = L(N)$, then $M$ is $L$-complete; that is, $L(N) \to L^2(N)$ is an isomorphism
for all $N$. Thus the full subcategory of $L$-complete modules is a reflexive sub-category of all 
continuous $E_0$-modules.
By Proposition 8.4 of \cite{HvStr}, $E_\ast X$ is $L$-complete for all $X$.

If $N$ is an $E_0$-module, there is a short exact sequence
\begin{equation}\label{eq:L-exact-seq}
0 \to \lim^1_k\, \Tor^{E_0}_1(E_0/m^k,N) \to L(N) \to \lim_k\, N/m^kN \to 0\, .
\end{equation}
Hence if $M$ is $L$-complete, then $M$ is $\mm$-complete, but it will be complete and separated only if the
right map is an isomorphism. See \cite{GHMR} \S 2 for some precise assumptions which guarantee that
$E_\ast X$  is complete and separated. All of the spectra in this paper will meet these assumptions.

Since $\GG_n$ acts on $E$, it acts on $E_\ast X$ in the category of $L$-complete modules. This action is twisted because it 
is compatible with the action of $\GG_n$ on the coefficient ring
$E_\ast$.  We will call $L$-complete $E_0$-modules with such a $\GG_n$-action either
{\it twisted $\GG_n$-modules}, or {\it Morava modules.}

For example, let $F \subseteq \GG_n$ be a closed subgroup. Then there is an isomorphism of twisted $\GG_n$-modules
\begin{equation}\label{eq:EstarEFbis}
E_\ast E^{hF} \cong \map(\GG_n/F,E_\ast)
\end{equation} 
where $\map(-,-)$ denotes the set of continuous maps. On the right hand side of this equation,
$E_\ast$ acts on the target and the $\GG_n$-action is diagonal. This needs a bit of care, as the 
proof of this result given in \cite{DH} is given for the Honda formal group and it may not be true
in general.  However, in analyzing the proof of the crucial Theorem 2 of \cite{DH} we see that
the isomorphism of (\ref{eq:EstarEFbis}) follows from the more basic isomorphism
$$
E_0E \cong \map(\GG_n,E_0)\,.
$$
The standard proof of this isomorphism for the Honda formal group
(see Theorem 12 of \cite{StrickGrossHop}, for example) requires only that our formal group be defined
over $\FF_p$ and satisfy the stabilization requirement of (\ref{eq:stabilize-autos}). 

\begin{rem}\label{vanishing-lines-1} We add here that Theorem 8.9 of \cite{HvStr} implies that the functor
$$
X \mapsto E_\ast X = \pi_\ast L_{K(n)}(E_n \wedge X)
$$
detects weak equivalences in the $K(n)$-local category. Given a map $f:X \to Y$, the class of spectra
$Z$ so that $L_{K(n)}(Z \wedge f)$ is a weak equivalence is closed under cofibrations and retracts; hence
if $E$ is in this class, then $L_{K(n)}S^0$ is in this class.
\end{rem}

\subsection{Mapping spectra}
 
We collect here some basics about the mapping spectra $F(E^{hF_1},E^{hF_2})$ for various subgroups $F_1$
and $F_2$ of $\GG_n$. 

\begin{rem}\label{EstarEF} Let $F \subseteq \GG_n$ be a closed subgroup. 
We begin with the  equivalence
\begin{equation}\label{theusual}
E \wedge E^{hF} \simeq \map(\GG/F,E)
\end{equation}
from the local smash product to  the localized spectrum of continuous maps. It is helpful to visualize this map as 
sending $x \wedge y$ to the
function $gF \mapsto x(gy)$. We will continue this mnemonic below: using point-wise defined
functions to indicate maps of spectra which cannot be defined that way. We hope the readers
can fill in the details themselves; if not, complete details can be found in \cite{GHMR} \S 2.

The action of $\GG_n$ on $E$ in (\ref{theusual}) defines the Morava module structure of
$E_\ast E^{hF}$; under the isomorphism of (\ref{theusual})
this maps to the diagonal action on the functions 
$$
(h\phi)(g) = h\phi(h^{-1}g).
$$
Note that
$$
(E_\ast E^{hF})^{\GG_n} = \map_{\GG_n}(\GG_n/F,E_\ast) \cong (E_\ast)^F\ .
$$
By \cite{DH}, Theorem 2, this extends to an isomorphism
$$
H^\ast(\GG_n,E_\ast E^{hF}) \cong H^\ast(F,E_\ast).
$$
See also Lemma \ref{reduce-to-finite} below.
\end{rem}

\begin{rem}\label{ghmres} We now recall some results from \cite{GHMR}.  
If $X = \lim\ X_i$
is a profinite set, let $E[[X]] = \lim\ E\wedge X_i^+$ where the $+$ indicates a disjoint
basepoint. Then if $F_1$ is a closed subgroup of $\GG_n$ we have an equivalence
\begin{equation}\label{mappingf}
E[[\GG_n/F_1]] \simeq F(E^{hF_1},E) 
\end{equation}
defined as follows. Let $F_E$ denote the function spectrum in $E$-modules. Then
\begin{align*}
E[[\GG_n/F_1]] &\simeq F_E(\map(\GG_n/F_1,E),E)\\
&\simeq F_E(E\wedge E^{hF_1},E)\\
&\simeq F(E^{hF_1},E).
\end{align*}
Next note that the equivalence of (\ref{mappingf}) is $\GG_n$-equivariant with the following
actions: in $F(E^{hF_1},E)$ we act on the target and in $E[[\GG_n/F_1]]$ we act
as follows:
$$
h(\sum\ a_g gF_1) = \sum h(a_g) h^{-1}gF_1.
$$

We can now make the following deductions. First suppose $F_1=U$ is open (and hence
closed), so that $\GG_n/F_1=\GG_n/U$ is finite. Let $F_2$ be finite. Then we have equivalences
\begin{align}\label{map-space-6}
\prod_{F_2\backslash \GG_n/U} E^{hF_x} &\simeq E[[\GG_n/U]]^{hF_2}\\
&\simeq F(E^{hU},E)^{hF_2}\nonumber\\
&\simeq F(E^{hU},E^{hF_2}).\nonumber
\end{align}
The product in the source is over the  double coset space, and for a double coset $F_2xU$,
$$
F_x = F_2 \cap xUx^{-1} \subseteq F_2.
$$
Since it depends on a choice of $x$, the group $F_x$ is defined only up to conjugation, but the fixed point spectrum 
$E^{hF_x}$ is well-defined up to weak equivalence. 

The first map of (\ref{map-space-6}) sends $a \in E^{hF_x}$ to the sum
$$
\sum_{g F_x \in F_2/F_x} (g^{-1}a)\ gxU.
$$

We say a word about the naturality of the equivalence of (\ref{map-space-6}). Suppose $U \subseteq V \subseteq 
\GG_n$ is a nested pair of open subgroups. Then for each double coset $F_2xU$ we get a double coset
$F_2xV$, a nested pair of subgroups
$$
F_x = F_2 \cap xUx^{-1} \subseteq F_2 \cap xVx^{-1}  = G_x\ ,
$$
and a transfer map
$$
\mathrm{tr}_x:E^{hF_x} \longr E^{hG_x}
$$
associated to this inclusion. Then we have a commutative diagram
\begin{equation}\label{tmap-space-60}
\xymatrix{
\prod_{F_2\backslash \GG_n/U} E^{hF_x}\dto_{\mathrm{tr}} \rto^{\simeq} &E[[\GG_n/U]]^{hF_2}\dto^g\\
\prod_{F_2\backslash \GG_n/V} E^{hG_x}  \rto^{\simeq} & E[[\GG_n/V]]^{hF_2}
}
\end{equation}
where the map $\mathrm{tr}$ is the sum of the transfer maps and the map $g$ induced by the quotient
$\GG_n/U \to \GG_n/V$.

For a more general closed subgroup $F_1$ write $F_1 = \cap_i U_i$ where
$U_i \subseteq \GG_n$ is open. Then, for $F_2$ finite we get a weak equivalence
\begin{equation}\label{thegencase}
\holim_i\ \prod_{F_2\backslash \GG_n/U_i} E^{hF_x} \simeq  F(E^{hF_1},E^{hF_2})
\end{equation}
where the product is as before and 
$$
F_x =F_2 \cap xU_ix^{-1} \subseteq F_2
$$
depends on $i$ and, as in (\ref{tmap-space-60}), there may be transfer maps  in the transition maps
for the homotopy limit. As $F_2$ is finite and the product in (\ref{thegencase}) is finite, this implies
that  the image of
$$
\prod_{F_2\backslash \GG_n/U_j} \pi_\ast E^{hF_x} \longr \prod_{F_2\backslash \GG_n/U_i} \pi_\ast E^{hF_x}
$$
is independent of $j$ for large $j$. It follows that there will be no $\lim^1$ term for the homotopy
groups of the inverse limit and hence there is an isomorphism
\begin{equation}\label{thegencase-homotopy}
\lim_i\ \prod_{F_2\backslash \GG_n/U_i} \pi_\ast E^{hF_x} \cong \pi_\ast F(E^{hF_1},E^{hF_2})
\end{equation}
\end{rem}

\subsection{The $K(n)$-local Adams-Novikov Spectral Sequence} This is the main technical tool
of this paper, and we give a few details of the construction and some of its properties. We begin 
with some algebra from \cite{HvStr}, Appendix A. 

Let $\mm \subset E_0 \cong \WW[[u_1,\cdots,u_{n-1}]]$ be the maximal ideal. An $L$-complete
$E_0$-module $M$ is {\it pro-free} if any one of the following equivalent conditions is satisfied:
\begin{enumerate}

\item $M \cong L(N) \cong N^{\wedge}_\mm$ for some free $E_0$-module $N$;

\item the sequence $(p,u_1,\cdots,u_{n-1})$ is regular on $M$;

\item $\mathrm{Tor}_1^{E_0}(\FF_{p^n},M) = 0$; 

\item $M$ is projective in the category of $L$-complete modules.
\end{enumerate} 

Proposition A.13 of \cite{HvStr} implies that a continuous homomorphism $M \to N$ of pro-free modules
is an isomorphism if and only if $M/\mm M \to N/\mm N$ is an isomorphism.

If $F \subseteq \GG_n$ is a closed subgroup, then $E_\ast E^{hF}$ is pro-free, by (\ref{eq:EstarEFbis}).

By Proposition 8.4 of \cite{HvStr}, if $X$ is a spectrum with
$K(n)_\ast X$ concentrated in even degrees, then $E_\ast X$ is pro-free, concentrated in even degrees.
Furthermore, if $E_\ast X$ is pro-free and concentrated in even degrees, then 
$$
K(n)_\ast X \cong \FF_{p^n} \otimes_{E_0} E_\ast X \cong (E_\ast X)/\mm.
$$

\begin{rem}\label{vanishing-lines}
We now come to the Adams-Novikov Spectral Sequence in $K(n)$-local category. As
in \cite{StrickGrossHop}, Proposition 15,  this spectral sequence is obtained by the standard cosimplicial
cobar complex for $E=E_n$ in the $K(n)$-local category. Thus we have
$$
E_2^{s,t} \cong \pi^s\pi_t(E^\bullet \wedge X) \Longrightarrow \pi_{t-s}L_{K(n)}X.
$$
The smash products are in the $K(n)$-local category. It is a consequence of the proof of
\cite{StrickGrossHop}, Proposition 15 that this spectral sequence converges strongly and has a horizontal
vanishing line at $E_\infty$. 
\end{rem}

We'd now like to rewrite the $E_2$-term as group cohomology, at least under some hypotheses. For any group
$G$, let $EG$ be the standard contractible simplicial $G$-set with $(EG)_s = G^{s+1}$. This is a bar construction. If
$G$ is a topological group, then $EG$ is a simplicial space.

Let  $M$ be an $L$-complete Morava module. We define
$$
H^s(\GG_n,M) = \pi^s\map_{\GG_n}(E\GG_n,M)
$$
where $\map_{\GG_n}(-,-)$ denotes the group of continuous $\GG_n$-maps. Since there is an
isomorphism
$\map_{\GG_n}(\GG_n^{s+1},M) \cong \map(\GG_n^{s},M)$, we have that $H^\ast(\GG_n,M)$
is the $s$th cohomology group of a cochain complex
$$
\xymatrix{
M \ar[r] & \map(\GG_n,M) \ar[r] & \map(\GG_n^{2},M) \ar[r] & \cdots
}
$$

\begin{prop}\label{e2-is-grp-cohio}  Suppose $X = Y \wedge Z$ where $K(n)_\ast Y$ is concentrated in 
even degrees and $Z$ is a finite complex. Then we have an isomorphism
$$
\pi^s\pi_t(E^\bullet \wedge X)  \cong H^s(\GG_n,E_t X)
$$
and the $K(n)$-local Adams-Novikov Spectral Sequence
reads
$$
H^s(\GG_n,E_t X) \Longrightarrow \pi_{t-s}L_{K(n)}X.
$$
\end{prop}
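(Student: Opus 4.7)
The plan is to identify the cosimplicial abelian group $\pi_t(E^{\bullet+1}\wedge X)$ with the bar cochain complex $\map(\GG_n^{\bullet}, E_t X)$ defining $H^\ast(\GG_n, E_t X)$; the strong convergence of the spectral sequence has already been recorded in Remark \ref{vanishing-lines}. First I would reduce to the case $Z=S^0$: since $Z$ is finite, a cellular induction via cofiber sequences and the five lemma reduces the claim to showing $\pi^s\pi_t(E^{\bullet+1}\wedge Y) \cong H^s(\GG_n, E_t Y)$.

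Next I would establish, by induction on $s$, an isomorphism of Morava modules
$$
\pi_t(E^{s+1}\wedge Y)\cong\map(\GG_n^s, E_t Y).
$$
The case $s=0$ is tautological. For the inductive step, Proposition 8.4 of \cite{HvStr} together with a K\"unneth argument over the graded field $K(n)_\ast$ ensure that $E_\ast(E^{s+1}\wedge Y)$ remains pro-free and concentrated in even degrees whenever $E_\ast Y$ is. Combining the basic isomorphism $E_0 E \cong \map(\GG_n, E_0)$ from Remark \ref{EstarEF} with the Nakayama-style Proposition A.13 of \cite{HvStr} then yields
$$
E_\ast(E\wedge W)\cong\map(\GG_n, E_\ast W)
$$
for any $W$ with $E_\ast W$ pro-free, which drives the induction.

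The third step is to match the cosimplicial structure. The coface maps of $E^{\bullet+1}\wedge Y$ come from inserting the unit $\eta\colon S^0\to E$ in each position; under the identification above these become the standard bar coface maps on $\map(\GG_n^\bullet, E_t Y)$, with the leftmost face $d^0\phi(g_1,\ldots,g_s)=g_1\phi(g_2,\ldots,g_s)$ encoding the twisted Morava-module action on $E_t Y$. Taking $\pi^s$ of this cochain complex then gives $H^s(\GG_n, E_t Y)$ by the definition immediately preceding the proposition.

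The main obstacle is this last step: one must carefully track the twisted $\GG_n$-action through the inductive isomorphism and verify that it transports to the diagonal action on $\map(\GG_n^s, E_t Y)$, so that the coface maps transport to the bar differentials (including the twist on $d^0$). This bookkeeping is a variant of what appears in \cite{DH} and \cite{StrickGrossHop}, and relies on the stabilization hypothesis (\ref{eq:stabilize-autos}) on $\Gamma_n$ which ensures that $E_0 E$ has the required form.
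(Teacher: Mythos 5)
Your argument is correct and follows essentially the same route as the paper: the paper reduces to $Z=S^0$, invokes Theorem 4.3 of \cite{BHANSS} (whose key point is exactly your inductive extension of $E_0E\cong\map(\GG_n,E_0)$ to $E_\ast(E^{\wedge s}\wedge X)\cong\map(\GG_n^s,E_\ast X)$ using pro-freeness), and handles general finite $Z$ by the five lemma. You have simply unpacked the cited induction and cosimplicial identification rather than quoting it.
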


\begin{proof} If $Z = S^0$, then $E_\ast X$ is pro-free in even degrees. The result can be found in
Theorem 4.3 of \cite{BHANSS}. The authors there work with the version of Morava $E$-theory obtained from
the Honda formal group, but they need only the isomorphism
$E_0E \cong \map(\GG_n,E_0)$, which holds in our case. See the remarks after \eqref{eq:EstarEFbis}. The
key idea is that this last isomorphism can be extended to an isomorphism
$$
E_\ast (E^{\wedge s}\wedge X) \longr \map(\GG_n^s,E_\ast X)
$$
for any spectrum $X$ with $E_tX$ pro-free for $t$. 
For more general $Z$, the isomorphism on $E_2$-terms follows from the five lemma. 
\end{proof} 

In some crucial cases, it is possible to reduce the $E_2$-term to 
group cohomology over a finite group. Let $F \subseteq \GG_n$
be a finite subgroup and $N$ an $L$-complete twisted $F$-module. Define an $L$-complete module
$N\! \uparrow_{F}^{\GG_n}$ as the set of continuous maps $\phi:\GG_n \to N$ such that $\phi(hx) = h\phi(x)$ 
for $h \in F$. This becomes a Morava module with
$$
(g\phi)(x) = \phi(xg)
$$
with $g \in \GG_n$ and there is an isomorphism
$$
\map_{\GG_n}(\GG_n^{s+1},N\! \uparrow_{F}^{\GG_n}) \cong \map_{F}(\GG_n^{s+1},N)
$$
of groups of continuous maps. Let $H^s(F,N) = \pi^s\map_F(EF,N)$. 

\begin{lem}[{\bf Shapiro Lemma}]\label{lem:Shapiro}
Let $F \subseteq \GG_n$ be a finite subgroup and suppose $N$ is an $L$-complete 
twisted $F$-module with the property that $N/\mm N$ is finite dimensional over $\FF_{p^n}$. Then there is an 
isomorphism
$$
H^\ast(\GG_n,N\! \uparrow_{F}^{\GG_n}) \cong H^\ast(F,N)
$$
and, under these hypotheses on $N$, there is an isomorphism
$$
H^\ast(F,N) \cong \lim_k\ H^s(F,N/m^kN).
$$
\end{lem}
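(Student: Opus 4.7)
\emph{The plan is} to prove the two isomorphisms in tandem: the second (Milnor-type reduction to finite coefficients) goes first, after which the Shapiro isomorphism reduces to its classical form. I begin with $\mm$-adic completeness. Because $N/\mm N$ is finite dimensional over the finite field $\FF_{p^n}$, the $L$-complete Nakayama lemma (cf.\ \cite{HvStr}, Appendix A) implies that $N$ is finitely generated as an $E_0$-module. Since $E_0 = \WW[[u_1,\ldots,u_{n-1}]]$ is a Noetherian complete local ring, $N$ is then $\mm$-adically complete and separated, so $N \cong \lim_k N/\mm^k N$, and each $N/\mm^k N$ has finite length over the Artinian ring $E_0/\mm^k$; since the residue field is finite, each $N/\mm^k N$ is in fact a finite set. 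Now for the second isomorphism: since $F$ is finite, the cochain complex $\map_F(F^{\bullet+1},N)$ is, in each degree, a finite product of copies of $N$, so it equals $\lim_k \map_F(F^{\bullet+1},N/\mm^k N)$. The groups $H^s(F,N/\mm^k N)$ are finite (cohomology of a finite group with finite coefficients), so the tower $\{H^{s-1}(F, N/\mm^k N)\}_k$ is trivially Mittag-Leffler, and the Milnor short exact sequence for cohomology of an inverse limit of complexes collapses to give $H^s(F,N) \cong \lim_k H^s(F,N/\mm^k N)$.

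For the Shapiro isomorphism, the adjunction displayed just before the statement gives an isomorphism of cochain complexes $\map_{\GG_n}(\GG_n^{\bullet+1}, N\uparrow_F^{\GG_n}) \cong \map_F(\GG_n^{\bullet+1}, N)$, where $F$ acts diagonally on $\GG_n^{\bullet+1}$. The $F$-action is free (via the first coordinate), so $E\GG_n$ restricted along $F \hookrightarrow \GG_n$ is itself a free $F$-resolution of the trivial module; for discrete finite $N$ this recovers the classical continuous Shapiro lemma, both $\map_F(\GG_n^{\bullet+1}, N)$ and $\map_F(F^{\bullet+1}, N)$ computing $H^*(F,N)$. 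For the general $L$-complete $N$ satisfying our hypotheses, I would rerun the Milnor argument above on both sides, using $N\uparrow_F^{\GG_n} \cong \lim_k (N/\mm^k N)\uparrow_F^{\GG_n}$ together with the finiteness of each $H^*(\GG_n, (N/\mm^k N)\uparrow_F^{\GG_n}) \cong H^*(F, N/\mm^k N)$ coming from the finite-coefficient case, to pass from finite to general coefficients.

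The algebraic content of the lemma is classical; the main obstacle is topological bookkeeping, namely verifying that continuous cochain complexes, inverse limits, and cohomology commute appropriately in the $L$-complete setting. The essential inputs that make everything go through are the finiteness of $F$ and of $N/\mm N$: together these force the intervening cohomology groups to be finite and the requisite Mittag-Leffler conditions to hold automatically.
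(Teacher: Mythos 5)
Your proposal is correct, but it is organized differently from the paper's proof, and it is worth comparing the two. The paper obtains both isomorphisms simultaneously from a single identification of the cochain complex: choosing open subgroups $U_i \subseteq \GG_n$ of finite index with $\cap_i U_i = \{e\}$, it rewrites $\map_{\GG_n}(\GG_n^{s+1}, N\!\uparrow_{F}^{\GG_n}) \cong \lim_k \colim_i\, \map_{F}((\GG_n/U_i)^{s+1}, N/\mm^k N)$; for $i$ large enough $F \cap U_i = \{e\}$, so $(\GG_n/U_i)^{\bullet+1}$ is a finite, free, contractible simplicial $F$-set, the inner (filtered, hence exact) colimit computes $H^s(F,N/\mm^kN)$, and finiteness of these groups kills $\lim^1$, giving both statements at once. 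You instead prove the $\mm$-adic statement first via a Milnor sequence and then dispose of the Shapiro isomorphism at finite coefficients by asserting that $\GG_n^{\bullet+1}$, with its free $F$-action, is already a ``free $F$-resolution,'' i.e.\ by invoking the classical continuous Shapiro lemma. That assertion is true, but it is precisely the point where continuity needs an argument: one must know that $\map_F(\GG_n^{\bullet+1},M)$ computes $H^\ast(F,M)$ for finite discrete $M$, which amounts to showing each $\map(\GG_n^{s+1},M)$ is $F$-cohomologically trivial (e.g.\ via a continuous $F$-equivariant splitting $\GG_n^{s+1} \cong F \times F\backslash\GG_n^{s+1}$, which exists because $F$ is finite and the spaces are profinite), or else to running the paper's reduction to the finite quotients $\GG_n/U_i$. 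So your route buys a cleaner conceptual separation --- in particular your Nakayama step, giving that $N$ is finitely generated, hence $\mm$-adically complete and separated with finite quotients $N/\mm^kN$, spells out something the paper treats tersely via the exact sequence \eqref{eq:L-exact-seq} --- at the cost of outsourcing the finite-coefficient Shapiro step that the paper proves self-containedly by the colimit-over-open-subgroups device. The essential inputs (finiteness of $F$ and of $N/\mm^kN$, hence the Mittag-Leffler condition) are the same in both arguments, and your passage to general $N$ by taking limits on both sides of the adjunction does go through as you describe.
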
 

\begin{proof}Since $N$ is $L$-complete and $N/\mm N$ is finite, the short exact sequence of (\ref{eq:L-exact-seq})
implies $N \cong N^\wedge_\mm$.

Choose a nested sequence $U_{i+1} \subseteq U_{i} \subseteq \GG_n$ of finite
index subgroups of $\GG_n$ with the property that $\cap\, U_i =\{e\}$.
Then for all $s \geq 0$ we have
\begin{align*}\map_{\GG_n}(\GG_n^{s+1},N\! \uparrow_{F}^{\GG_n}) &\cong
						\map_{F}(\GG_n^{s+1},N)\\
& \cong \lim_k\, \map_{F}(\GG_n^{s+1},N/\mm^kN)\\
& \cong \lim_k\, \colim_i\, \map_{F}((\GG_n/U_i)^{s+1},N/\mm^kN).\\
\end{align*}
The last isomorphism follows from the fact that $N/m^kN$ is discrete and finite. Since $F$ is finite, we have 
$F \cap U_i = \{e\}$ for all $i$ greater than some $i_0$. Then for $i > i_0$ it follows that 
$(\GG_n/U_i)^{\bullet +1}$ is a contractible simplicial free $F$-set and, thus,
we have an isomorphism
$$
\pi^s\map_{F}((\GG_n/U_i)^{\bullet+1},N/\mm^kN) \cong H^s(F,N/\mm^kN)\, .
$$
Again since $F$ is finite, $H^s(F,N/\mm^kN)$ is a finite abelian group. Both isomorphisms
of the lemma now follow.
\end{proof}

\begin{lem}\label{reduce-to-finite} Let $Y$ be a spectrum equipped with an isomorphism of Morava modules
$$
E_\ast Y \cong \map(\GG_n/F,E_\ast) \cong E_\ast E^{hF}
$$
where $F \subseteq \GG_n$ is a finite subgroup. Then for all finite spectra $Z$, there is an isomorphism
$$
H^\ast(\GG_n,E_\ast (Y \wedge Z)) \cong H^\ast (F,E_\ast Z).
$$
\end{lem}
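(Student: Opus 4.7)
The strategy is to reduce the claim to the Shapiro Lemma (Lemma~\ref{lem:Shapiro}) by identifying $E_\ast(Y\wedge Z)$ with the induced Morava module $(E_\ast Z)\!\uparrow_F^{\GG_n}$.

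Since $Z$ is a finite spectrum, induction on its cells together with the five lemma gives an isomorphism of Morava modules
$$
E_\ast Y \otimes_{E_0} E_\ast Z \;\xrightarrow{\;\cong\;}\; E_\ast(Y \wedge Z),
$$
with $\GG_n$ acting diagonally on the source; no completion issue arises because $E_\ast Z$ is finitely presented over $E_0$. Using the hypothesis $E_\ast Y \cong \map(\GG_n/F, E_\ast)$ and the fact that $\GG_n/F$ is profinite while $E_\ast Z$ is finitely generated, this rewrites as
$$
E_\ast(Y\wedge Z) \;\cong\; \map(\GG_n/F, E_\ast) \otimes_{E_0} E_\ast Z \;\cong\; \map(\GG_n/F, E_\ast Z).
$$
Unwinding the diagonal action (as recorded in Remark~\ref{EstarEF}), the right-hand side is precisely the induced module $(E_\ast Z)\!\uparrow_F^{\GG_n}$ appearing in Lemma~\ref{lem:Shapiro}.

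It remains to verify the hypotheses of Shapiro with $N = E_\ast Z$: the module $E_\ast Z$ is $L$-complete because it is finitely generated over $E_0$, and $(E_\ast Z)/\mm(E_\ast Z) \cong K(n)_\ast Z$ is finite-dimensional over $\FF_{p^n}$ in each degree by finiteness of $Z$. Applying Shapiro then produces
$$
H^\ast(\GG_n, E_\ast(Y\wedge Z)) \;\cong\; H^\ast(\GG_n, (E_\ast Z)\!\uparrow_F^{\GG_n}) \;\cong\; H^\ast(F, E_\ast Z).
$$
The one subtle point is the bookkeeping in the identification above: one must check that the diagonal $\GG_n$-action on $\map(\GG_n/F, E_\ast Z)$ coincides, under a substitution of the form $\phi(x) = x \cdot f(x^{-1})$, with the action specified on $N\!\uparrow_F^{\GG_n}$ in the preamble of Lemma~\ref{lem:Shapiro}. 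This is the main obstacle, but it is a routine unwinding of the formulas for the Morava module structure on $E_\ast E^{hF}$.
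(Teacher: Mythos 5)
Your proposal is correct and follows essentially the same route as the paper: identify $E_\ast(Y\wedge Z)$ with $\map(\GG_n/F,E_\ast Z)$ carrying the conjugation action, recognize this as the induced module $(E_\ast Z)\!\uparrow_F^{\GG_n}$ via evaluation at $eF$, and invoke the Shapiro Lemma \ref{lem:Shapiro}. Your intermediate factorization through $E_\ast Y\otimes_{E_0}E_\ast Z$ is only a slight elaboration (harmless here since $E_\ast Y$ is pro-free and $Z$ is finite) of the isomorphism $E_\ast(Y\wedge Z)\cong\map(\GG_n/F,E_\ast Z)$ that the paper asserts directly by induction over the cells of $Z$.
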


\begin{proof} We have an isomorphism of Morava modules
$$
\xymatrix{
E_\ast (Y \wedge Z) \ar[r]^-\cong& \map(\GG_n/F,E_\ast Z)
}
$$
where the action of $\GG_n$ on the target is by conjugation: $(g\phi)(x) = g\phi(g^{-1}x)$. There is an isomorphism of
Morava modules
$$
\map(\GG_n/F,E_\ast Z) \cong  (E_\ast Z)\! \uparrow_{F}^{\GG_n}
$$
adjoint to evaluation at $eF$. 
The result follows from the Shapiro Lemma \ref{lem:Shapiro} .
\end{proof}

\begin{rem}\label{rem:why-did-we-bother}  Suppose $Y = E^{hF}$. Then Lemma \ref{reduce-to-finite} follows
from the variant of Theorem 2 of \cite {DH} appropriate for our formal group; indeed, we need
only require that $F$ be closed. 
However, the proof relies on the construction of $E^{hF}$ given in 
that paper and doesn't {\it a priori} apply if we only know $E_\ast Y \cong E_\ast E^{hF}$ -- as will be the 
case in our application.
\end{rem}

\begin{rem}\label{not-p-typical} There is a map of Adams-Novikov Spectral Sequences
$$
\xymatrix{
\Ext^s_{BP_\ast BP}(\Sigma^t BP_\ast, BP_\ast) \ar@{=>}[r] \ar[d] & \ZZ_{(2)} \otimes \pi_{t-s}S^0 \ar[d]\\
H^s(\GG_2,E_t) \ar@{=>} [r] & \pi_{t-s}L_{K(2)}S^0
}
$$
but it takes a little care to define. Let $G(x,y) \in E^0[[x,y]]$ be the formal group law of the supersingular curve of
(\ref{ss-curve-def}); since $G$ is the formal group of a Weierstrass curve, it has a preferred coordinate. 
Let $G_\ast(x,y) = uG(u^{-1}x,u^{-1}y) \in E^\ast[[x,y]]$. Then $G_\ast$ is a formal group over
$E^\ast$ with coordinate in cohomological degree $2$ and, therefore, is classified by
a map $\ZZ_{(2)} \otimes MU_\ast \to E_\ast$. Since $G_\ast$ is not evidently $2$-typical, it need not
be classified by a map $BP_\ast \to E_\ast$. However, over a $\ZZ_{(2)}$-algebra,
the Cartier idempotent  gives an equivalence between the groupoid of all formal group laws and
the groupoid of $2$-typical formal group laws; hence we have a diagram of spectral sequences as needed:
$$
\xymatrix{
\Ext^s_{BP_\ast BP}(\Sigma^t BP_\ast, BP_\ast) \ar@{=>}[r]&  \ZZ_{(2)} \otimes \pi_{t-s}S^0\\
\ZZ_{(2)}\otimes \Ext^s_{MU_\ast MU}(\Sigma^t MU_\ast, MU_\ast) \ar@{=>}[r] \ar[u]^-\cong \ar[d] &
 \ZZ_{(2)} \otimes \pi_{t-s}S^0\ar[u]_= \ar[d]\\
H^s(\GG_2,E_t) \ar@{=>}[r]& \pi_{t-s}L_{K(2)}S^0 \ .
}
$$
We will use this below in the section on the cohomology of $G_{48}$. 
\end{rem} 

\subsection{The action of the Galois group}
We now turn to analyzing $E^{h\SS_2}$ as an equivariant spectrum over the Galois group. As above,
we will write $\Gal = \Gal(\FF_{p^n}/\FF_p)$, so that $\GG_n \cong \SS_n \rtimes \Gal$.

We begin with the following elementary fact: the map $\ZZ_p \to \WW$ is Galois with
Galois group $\Gal$; thus, it is faithfully flat, \'etale, and
the shearing map
$$
\WW \otimes_{\ZZ_p} \WW \to \map(\Gal,\WW)$$
sending $a \otimes b$ to the function $g \mapsto ag(b)$ is an isomorphism. In fact, this shearing map
is certainly an isomorphism modulo $p$; then the statement for $\WW$ follows from Nakayama's Lemma.
Faithfully flat descent now implies that
the category of $\ZZ_p$-modules is equivalent to the category of twisted $\WW[\Gal]$-modules under 
the functor $M \mapsto \WW \otimes_{\ZZ_p} M$; the inverse to this functor sends $N$ to $N^{\Gal}$.

This extends to the following result.

\begin{lem}\label{coh-decomp-galois} Let $K \subseteq \GG_n$ be a closed subgroup and let
$K_0 = K \cap \SS_n$. Suppose the canonical map
$$
K/K_0 \longr \GG_n/\SS_n \cong \Gal
$$
is an isomorphism. Then for any twisted $\GG_n$-module $M$ we have isomorphisms
\begin{align*}
H^\ast(K,M)  &\cong H^\ast(K_0,M)^{\Gal}\\
H^\ast(K_0,M) &\cong \WW \otimes_{\ZZ_p} H^\ast(K,M)\ .
\end{align*}
\end{lem}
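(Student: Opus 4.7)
The plan is to apply the Lyndon--Hochschild--Serre spectral sequence for the normal subgroup $K_0 \triangleleft K$ and then use faithfully flat descent along $\ZZ_p \to \WW$ to show it collapses. Since $K_0$ is by hypothesis the kernel of the composite $K \to \GG_n \to \GG_n/\SS_n \cong \Gal$, it is normal in $K$ with quotient canonically $\Gal$, so I get a continuous cohomology spectral sequence
$$
H^p(\Gal, H^q(K_0, M)) \Longrightarrow H^{p+q}(K, M).
$$

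The crucial observation is that $H^q(K_0, M)$ is a twisted $\WW[\Gal]$-module, precisely of the kind handled by the faithfully flat descent statement recalled just before the lemma. Indeed, because $K_0 \subseteq \SS_n = \Aut(\Gamma_n/\FF_{p^n})$, the action of $K_0$ on $E_0 = \WW[[u_1,\ldots,u_{n-1}]]$ fixes $\WW$ pointwise; hence $M$ is a $\WW$-module on which $K_0$ acts $\WW$-linearly, and the induced $\WW$-action on $H^q(K_0, M)$ is well defined. The conjugation action of $K$ on $K_0$ descends to an action of $\Gal$ on $H^q(K_0, M)$, and this action is semi-linear with respect to the $\Gal$-action on $\WW$ coming from the embedding $\WW \subset E_0$ and $\Gal \subset \GG_n$.

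Given this, the descent equivalence $N \mapsto \WW \otimes_{\ZZ_p} N$ between $\ZZ_p$-modules and twisted $\WW[\Gal]$-modules, with inverse $(-)^{\Gal}$, implies that every such module $N$ satisfies $N \cong \WW \otimes_{\ZZ_p} N^{\Gal}$ and $H^p(\Gal, N) = 0$ for all $p > 0$. Applying this to $N = H^q(K_0, M)$ collapses the spectral sequence to the edge isomorphism $H^\ast(K, M) \cong H^\ast(K_0, M)^{\Gal}$, which is the first claim. The second claim is then immediate from $H^\ast(K_0, M) \cong \WW \otimes_{\ZZ_p} H^\ast(K_0, M)^{\Gal} \cong \WW \otimes_{\ZZ_p} H^\ast(K, M)$.

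The main thing to verify carefully is that the Lyndon--Hochschild--Serre spectral sequence is available in the continuous setting for profinite $K$ acting on the $L$-complete module $M$; this follows from the construction $H^\ast(K, M) = \pi^\ast \map_K(EK, M)$ used earlier in the section, together with an induction over $M/\mm^k M$. The other subtle point is that the descent argument must be applied to the possibly non-discrete $\WW$-module $H^q(K_0, M)$; this is harmless because $\WW \otimes_{\ZZ_p} \WW \to \map(\Gal, \WW)$ is an isomorphism of finite free $\WW$-modules of rank $[\FF_{p^n}:\FF_p]$, so tensoring with any $\WW$-module preserves the isomorphism and the vanishing of higher $\Gal$-cohomology is formal.
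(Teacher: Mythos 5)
Your argument is correct and rests on exactly the same descent input as the paper's, but it is packaged differently. The paper does not invoke the Lyndon--Hochschild--Serre spectral sequence at all: it simply factors the invariants functor as
$$
(-)^{K} = (-)^{\Gal} \circ (-)^{K_0},
$$
from twisted $\GG_n$-modules through twisted $\WW[\Gal]$-modules to $\ZZ_p$-modules, and observes that the second functor is an equivalence of categories (Galois descent along $\ZZ_p \to \WW$, recalled just before the lemma), hence exact; so it commutes with taking cohomology of the $K_0$-cochain complex, giving $H^\ast(K,M) \cong H^\ast(K_0,M)^{\Gal}$ directly, and the second isomorphism follows by applying the inverse equivalence $\WW \otimes_{\ZZ_p}(-)$. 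Your route via the LHS spectral sequence with its $E_2$-collapse is the same fact in heavier clothing: the vanishing $H^p(\Gal, H^q(K_0,M)) = 0$ for $p>0$ that you prove is precisely the exactness of $(-)^{\Gal}$ on twisted $\WW[\Gal]$-modules. What the paper's phrasing buys is that you never need to justify the existence of a continuous LHS spectral sequence for a profinite $K$ acting on an $L$-complete module $M$ -- the one genuinely delicate point in your write-up, which you flag but treat rather lightly (the sketch via induction over $M/\mm^k M$ would need $\lim$/$\lim^1$ care for a general twisted module). Since $\Gal$ is finite and $K_0 = K \cap \SS_n$ is open of finite index in $K$, your spectral sequence can indeed be set up (e.g.\ from the double complex $\map_{\Gal}(E\Gal_\bullet, \map_{K_0}(EK_\bullet, M))$), so this is a surmountable burden rather than a gap; but given that the sequence collapses for the same reason the paper's functor is exact, the cochain-level argument is both shorter and safer. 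Your identification of $H^q(K_0,M)$ as a twisted $\WW[\Gal]$-module, and the Shapiro-type vanishing via $\WW \otimes_{\ZZ_p}\WW \cong \map(\Gal,\WW)$, are exactly right.
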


\begin{proof} The subgroup $\SS_n$ acts on $E_0$ through $\WW$-algebra homomorphisms;
hence it acts on $M$ through $\WW$-module homomorphisms. It follows that we can write
the functor $(-)^{K}$ of invariants as a composite functor
$$
\xymatrix{
\hbox{twisted $\GG_n$-modules} \rto^-{(-)^{K_0}} & \hbox{twisted $\WW[\Gal]$-modules} \rto^-{(-)^{\Gal}} &
\hbox{$\ZZ_p$-modules}.
}
$$
As we just remarked, the second 
of these two functors is an equivalence of categories and  in particular it is an exact functor.
The first equation follows. The second equation  follows from the first and the fact that the inverse to $(-)^{\Gal}$ is
the functor $M \mapsto \WW \otimes_{\ZZ_p}M$.
\end{proof}

We now give a fact seemingly known to everyone, but hard to find in print. Drew Heard was the first
to point out an error in our original argument; others followed quickly. We learned the
following replacement from Mike Hopkins, Agn\`es Beaudry, and the referee. We extend our thanks to everyone. 

\begin{lem}\label{hzero} For all $p$ and all $n \geq 1$ we have isomorphisms
\begin{align*}
H^0(\GG_n,E_0) &\cong \ZZ_p\\
H^0(\SS_n,E_0) &\cong  \WW = W(\FF_{p^n}).
\end{align*}
Furthermore, $H^0(\GG_n, E_t) = H^0(\SS_n,E_t) = 0$ if $t \ne 0$.
\end{lem}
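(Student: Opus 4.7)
The plan is to reduce everything to the computation of $H^0(\SS_n, E_t)$; the statements for $\GG_n$ then follow immediately from Lemma \ref{coh-decomp-galois} (applied with $K = \GG_n$, $K_0 = \SS_n$), together with the identification $\WW^{\Gal} = \ZZ_p$ when $t=0$.

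For $t \ne 0$ I would exploit the centre of $\SS_n$. The odd case is trivial because $E_t = 0$, so assume $t = 2k$ with $k \ne 0$. The centre of $\SS_n$ contains a copy of $\ZZ_p^\times$, acting through the canonical multiplication-by-$c$ endomorphism $[c]$ of the universal deformation. Because $[c]$ lies over the identity of $\Spf E_0$, every $c \in \ZZ_p^\times$ acts trivially on $E_0$ but scales the invariant differential---equivalently $u \in E_{-2}$---by $c$. Therefore $c$ acts on $E_{2k}$ by $c^{-k}$, and picking any infinite-order $c \in \ZZ_p^\times$ with $c^k \ne 1$ kills the invariants.

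For $t = 0$ the inclusion $\WW \subseteq E_0^{\SS_n}$ is immediate since $\SS_n = \Aut(\Ga_n/\FF_{p^n})$ acts on $E_0 = \WW[[u_1, \ldots, u_{n-1}]]$ by $\WW$-algebra automorphisms. For the reverse inclusion I would use the $\mm$-adic filtration with $\mm = (p, u_1, \ldots, u_{n-1})$. The residue field $E_0/\mm \cong \FF_{p^n}$ carries the trivial $\SS_n$-action (it is the base field over which $\Ga_n$ is defined), and the Teichm\"uller lift $\FF_{p^n} \hookrightarrow \WW \subseteq E_0^{\SS_n}$ splits the reduction $E_0^{\SS_n} \to \FF_{p^n}$. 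It then suffices to prove that for each $k \geq 1$, $(\mm^k/\mm^{k+1})^{\SS_n}$ is one-dimensional over $\FF_{p^n}$, spanned by the image of $p^k$. Granted this, every $x \in E_0^{\SS_n}$ can be inductively corrected by elements of $\WW$ modulo $\mm^k$, and $\mm$-adic completeness of $E_0$ yields $x \in \WW$.

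The hard part is precisely this last claim, which amounts to a computation of $\SS_n$-invariants inside the associated graded ring $\mathrm{gr}_\mm E_0 \cong \FF_{p^n}[\bar p, \bar u_1, \ldots, \bar u_{n-1}]$. Restricting to $\FF_{p^n}^\times \subseteq \SS_n$ already kills many monomials, because this subgroup acts on each $\bar u_i$ through a non-trivial character of order dividing $p^n - 1$; however, weight-balanced products (such as $\bar u_1^{p+1}$ when $n = 2$) can survive this first reduction. To eliminate them I would appeal to the Strickland formulas \cite{Strick} for the action of a ``uniformizer'' $S$ in the maximal order of $\End(\Ga_n)$, where $S^n$ is an associate of $p$ and $S$ cyclically permutes the $\bar u_i$ modulo $p$; combining $S$-invariance with $\FF_{p^n}^\times$-invariance cuts the remaining fixed elements down to $\FF_{p^n}[\bar p]$. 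A conceptually cleaner but no less technical alternative is a moduli-theoretic descent argument identifying $\Spf(E_0)/\SS_n$ with the formal neighbourhood of the height-$n$ point on the moduli stack of one-parameter formal groups, whose coarse moduli is $\Spf(\WW)$.
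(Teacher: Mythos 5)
Your Galois reduction via Lemma \ref{coh-decomp-galois} and your treatment of $t\ne 0$ through the central $\ZZ_p^\times$ (which acts trivially on $E_0$ and scales $u$) are fine and agree with the paper's proof. The gap is in the $t=0$ computation for $\SS_n$, which is where all the content of the lemma lies. Your reduction to the claim that $(\mm^k/\mm^{k+1})^{\SS_n}$ is spanned by the image of $p^k$ is a reduction to a \emph{false} statement once $n\geq 2$: since $\SS_n$ acts on $E_0$ by $\WW$-algebra automorphisms preserving $\mm$, its action on $\mathrm{gr}_{\mm}E_0 \cong \FF_{p^n}[\bar p,\bar u_1,\ldots,\bar u_{n-1}]$ is determined by the induced action on $\mm/\mm^2$, hence factors through the finite image of $\SS_n$ in $GL_n(\FF_{p^n})$. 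The invariants of a finite group acting linearly on a polynomial ring in $n$ variables form a subring of Krull dimension $n$; for instance the orbit product $\prod_{g} g\cdot\bar u_1$ is a homogeneous invariant of positive degree not lying in $\FF_{p^n}[\bar p]$. So in infinitely many degrees the graded invariants are strictly larger than $\FF_{p^n}\cdot\bar p^k$ and your inductive correction by elements of $\WW$ breaks down. The point is that the lemma is really about elements of $\SS_n$ such as $1+p\omega\in\WW^\times$ which act trivially on $\mm/\mm^2$, hence invisibly on the associated graded, but nontrivially on $E_0$; no argument that only sees $\mathrm{gr}_\mm$ can detect them.

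The mechanism you propose for the graded computation is also not legitimate: the uniformizer $S$, with $S^n$ an associate of $p$, is not a unit of $\End(\Ga_n)$ and therefore is not an element of $\SS_n$, so ``$S$-invariance'' is not a consequence of $\SS_n$-invariance (its natural effect on Lubin--Tate space arises from an isogeny, not from a ring automorphism cyclically permuting the $\bar u_i$; and Strickland's formulas \cite{Strick} concern the $n=2$ elliptic situation). Compare with the paper's argument: after reducing to the Honda formal group by Galois descent, it inverts $p$ (using that $E_0$ is torsion-free) and passes to the Devinatz--Hopkins coordinates $w_i$, on which every $a\in\WW^\times\subset\SS_n$ acts diagonally by $a_\ast w_i=\phi^i(a)a^{-1}w_i$; the single unit $a=1+p\omega$ then fixes no nontrivial monomial, which kills all invariants beyond $\WW$. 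That diagonalization from \cite{DHaction} is exactly the filtered (not graded) input your sketch is missing. If you want to keep an induction, the step you must actually supply is a computation of the invariants of the full profinite group on $E_0/p$ or on $E_0/\mm^{k+1}$, not of the invariants of the associated graded ring.
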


\begin{proof} By Lemma \ref{coh-decomp-galois} we need only do the case of $\SS_n$.

It is also sufficient to prove this when $\Gamma_n$ is the Honda formal group over $\FF_{p^n}$.
Any other height $n$ formal group becomes isomorphic to the Honda formal group over the algebraic closure
of $\FF_p$, and the general result could then be deduced from Galois descent. 

Write $\phi$ for lift of Frobenius to Witt vectors $\WW$.
For the Honda formal group, $\SS_n$ is the group of units in the endomorphism ring
$$
\End(\Gamma_n) \cong \WW\langle S\rangle/(S^n-p)
$$
where $\WW\langle S\rangle$ is non-commutative polynomial ring over $\WW$ on a variable $S$ with $Sa = \phi(a)S$
when $a \in \WW$.
Thus we have an inclusion $\WW^\times \subset \SS_n$.
Every $k\in\ZZ_p^\times \subseteq \SS_n$ corresponds to an automorphism $\Gamma_n \to [k]_{\Gamma_n}$ of $
\Gamma_n$; in particular $\ZZ_p^\times$ acts trivially on $E_0$ and if $u \in E_{-2}$
is the invertible generator, we have $k_\ast u =ku$.
If follows that $H^0(\ZZ_p^\times ,E_t) = 0$ if $t \ne 0$.

This leaves the case $t=0$. Write $\KK = \WW[p^{-1}]$. Since all elements of $\SS_n$ fix the constants
$\WW \subseteq E_0$ there is an inclusion
$$
\WW \cong H^0(\WW^\times,\WW) \to H^0(\WW^\times,E_0)\ .
$$
Furthermore, since $E_0$ is torsion-free,
it is sufficient to show $H^0(\WW^\times,E_0[p^{-1}]) \cong \KK$. By \cite{DHaction}, Lemma 4.3, there are power series
$w_i = w_i(u_1,u_2,\ldots,u_{n-1}) \in \KK[[u_1,\cdots,u_{n-1}]]$ and an inclusion
$$
\WW[[w_1,\cdots,w_{n-1}]] \longr \KK[[u_1,\cdots,u_{n-1}]]
$$
onto an $\GG_n$-equivariant sub-algebra which becomes an isomorphism after inverting $p$ in the source. Thus
it is sufficient to show that 
$$
H^0(\WW^\times,J) = 0
$$
where $J = \WW[[w_1,\cdots,w_{n-1}]]/\WW$. 

By \cite{DHaction}, Proposition 3.3, the action of $\WW^\times$ on $\WW[[w_1,\cdots,w_{n-1}]]$ is diagonal:
if $a \in \WW^\times$, then
$$
a_\ast w_i = \phi^i(a)a^{-1}w_i.
$$
Let $\omega \in \WW$ be a primitive $(p^n-1)$st root of unity and set $a = 1 + p\omega$. Then 
$\phi(1+p\omega) = 1 +p \omega^p$. If some $i_j \ne 0$ it follows that   
$$
(1 + p\omega)_\ast w_1^{i_1}\ldots w_{n-1}^{i_{n-1}} \ne w_1^{i_1}\ldots w_{n-1}^{i_{n-1}}
$$
as needed. 
\end{proof}

\begin{rem}\label{hzero-strong} Notice that the proof of Lemma \ref{hzero} actually shows
we need only relatively small subgroups of $\GG_n$ to get the full invariants. Specifically, we have
$$
\ZZ_p \cong H^0(\WW^\times \rtimes \Gal,E_0)
$$
and if $t \ne 0$, then $H^0(\ZZ_p^\times,E_t)=0$. We won't need this stronger result. 
\end{rem}

\begin{rem}\label{etalediffs} In the proof of Lemma \ref{decompose} below we will use the following observation. 
Suppose
we have a spectral sequence $\{E_r^{s,t}\}$  that is multiplicative; that is, $E_r^{\ast,\ast}$ is a bigraded 
ring which is commutative up to sign and $d_r$ satisfies the Leibniz rule, again up to sign.
Further suppose $R \subseteq E_r^{0,0}$ is a commutative
subring of $d_r$-cycles and $R \subseteq S$ is an \'etale extension in $E_r^{0,0}$. Then every
element of $S$ is a $d_r$-cycle. To see this, note that $d_r$ restricted to $S$
is a derivation over $R$ and any such derivation must vanish; indeed, depending on your foundations, 
the vanishing of such derivations may even be part of your definition of \'etale.
\end{rem}

\begin{lem}\label{decompose} For all $p$ and all $n \geq 1$ there is 
a $\Gal$-equivariant equivalence
$$
\Gal^+  \wedge L_{K(n )}S^0 \to E^{h\SS_n}.
$$
\end{lem}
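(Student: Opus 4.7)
My plan is to use the classical normal basis theorem for the Galois extension $\WW/\ZZ_p$ to build the required $\Gal$-equivariant equivalence. The normal basis theorem produces an element $\alpha \in \WW$ such that $\{g(\alpha)\}_{g\in\Gal}$ is a $\ZZ_p$-basis of $\WW$. I will realize $\alpha$ as a homotopy class, adjoint it to a $\Gal$-equivariant map, and then verify that the induced map on $E_\ast$ is the faithfully flat descent isomorphism.

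First I would lift $\alpha$ to $\pi_0 E^{h\SS_n}$. Lemma~\ref{hzero} identifies $H^0(\SS_n, E_0) = \WW$; this is the $E_2^{0,0}$-term of the Adams--Novikov spectral sequence for $E^{h\SS_n}$, which converges strongly by Remark~\ref{vanishing-lines}. The subring $\ZZ_p \subseteq \WW$ consists of permanent cycles since it is the image of $\pi_0 L_{K(n)}S^0 \to \pi_0 E^{h\SS_n}$, and $\ZZ_p \subseteq \WW$ is \'etale, so Remark~\ref{etalediffs} applied iteratively at each $E_r$-page shows that every element of $\WW$ is a permanent cycle. Thus $\alpha$ lifts to some $\tilde\alpha \in \pi_0 E^{h\SS_n}$; since $E^{h\SS_n}$ is $K(n)$-local this corresponds to a map $a : L_{K(n)}S^0 \to E^{h\SS_n}$. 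The free--forget adjunction for $\Gal$-spectra extends $a$ to a $\Gal$-equivariant map
\[
\bar a : \Gal^+ \wedge L_{K(n)}S^0 \longrightarrow E^{h\SS_n}
\]
whose restriction to the summand indexed by $g \in \Gal$ is the composite $g_\ast \circ a$, with $g_\ast$ the external Galois action on $E^{h\SS_n}$. Equivariance is automatic from the adjunction.

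By Remark~\ref{vanishing-lines-1}, it remains only to show that $\bar a$ induces an isomorphism on $E_\ast(-)$. Both sides have $E_\ast \cong \map(\Gal, E_\ast)$: the target by \eqref{eq:EstarEFbis}, and the source because $\Gal$ is finite. Unpacking the identifications of Remark~\ref{EstarEF} (under which the external $\Gal$-action on $E_\ast E^{h\SS_n}$ corresponds to right translation on $\map(\Gal, E_\ast)$), the map induced by $\bar a$ sends $\phi \in \map(\Gal, E_\ast)$ to $h \mapsto \sum_{g \in \Gal} \phi(g) \cdot h(g\alpha)$, i.e.\ it is multiplication by the matrix
\[
M_{h,g} = h(g\alpha) \in \WW \subseteq E_0, \qquad h,g \in \Gal.
\]
This matrix factors as the composition of the normal-basis isomorphism $\ZZ_p[\Gal] \xrightarrow{\cong} \WW$, $g \mapsto g\alpha$, tensored up to $\WW$, with the shearing isomorphism $\WW \otimes_{\ZZ_p} \WW \xrightarrow{\cong} \map(\Gal, \WW)$, $a \otimes b \mapsto (h \mapsto a \cdot h(b))$ given by faithfully flat descent. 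Both factors are isomorphisms, so $M$ is invertible over $\WW$, hence over $E_0$. Therefore $\bar a$ is an isomorphism on $E_\ast$ and so an equivalence.

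The main subtlety is the choice of $\alpha$: a naive choice such as $\alpha = 1$ produces an $\bar a$ whose induced map on $E_\ast$ factors through constant functions in $\map(\Gal, E_\ast)$ and is not surjective. The normal basis theorem supplies exactly the element needed so that the matrix $M$ above becomes the faithfully flat descent isomorphism.
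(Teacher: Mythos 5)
Your proposal is correct and is essentially the paper's own argument: the paper likewise uses Lemma~\ref{hzero} together with the \'etale-survival observation of Remark~\ref{etalediffs} to lift a normal-basis generator of $\WW$ (it takes the Teichm\"uller lift of a primitive $(p^n-1)$st root of unity rather than invoking the normal basis theorem abstractly) to $\pi_0E^{h\SS_n}$, extends it $\Gal$-equivariantly over $\Gal^+\wedge L_{K(n)}S^0$, and concludes via the shearing/descent isomorphism $\ZZ_p[\Gal]\cong\WW$ and $E_\ast$-detection of equivalences (Remark~\ref{vanishing-lines-1}). The only cosmetic difference is that the paper first produces a full ring splitting $\WW\to\pi_0E^{h\SS_n}$ using the nilpotence of the edge-homomorphism kernel, whereas you lift a single permanent cycle and verify invertibility of the resulting matrix $(h(g\alpha))$ directly, which suffices.
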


\begin{proof} We first prove we have injection $\WW \to \pi_0E^{h\SS_n}$. We begin with the
isomorphism $\WW \cong H^0(\SS_n,E_0)$ of Lemma \ref{hzero}. Since
$\ZZ_p \subseteq \WW$ is an \'etale extension and the Adams-Novikov Spectral Sequence for $\SS_n$ is a
spectral sequence of rings, all of $\WW$ survives to $E_\infty$ and the edge homomorphism
provides a surjection $\pi_0E^{h\SS_n} \to \WW$ of rings.  The kernel of this map
nilpotent as an ideal because the spectral sequence has a horizontal vanishing line at
$E_\infty$. See Remark \ref{vanishing-lines}. We then have a diagram
$$
\xymatrix{
\ZZ_p \ar[r] \ar[d]_{\subseteq} & \pi_0E^{h\SS_n} \ar[d]\\
\WW \ar[r]_{=} \ar@{-->}[ur]& \WW
}
$$
Again since $\ZZ_p \subseteq \WW$ is \'etale, the dashed arrow can be completed uniquely.
This yields the injection we need.

Now define  $\omega:S^0 \to E^{h\SS_n}$ to be a representative of the homotopy class defined by
a primitive $(p^n-1)$st root of unity in $\WW$. We can extend  $\omega$ to a $\Gal$-equivariant map
$f:\Gal^+ \wedge S^0 \to E^{h\SS_2}$ inducing the splitting $\WW \to \pi_0E^{h\SS_2}$;
here we use that the map $\ZZ_p[\Gal] \to \WW$
$$
\sum a_g g \longmapsto \sum a_g g(\omega)
$$
is an isomorphism. The map $f$ 
extends to an isomorphism of twisted $\GG_n$-modules 
$$
E_\ast f: E_\ast (\Gal^+\wedge S^0) \cong \map(\Gal,E_\ast) \cong \map(\GG_n/\SS_n,E_\ast) \cong
E_\ast E^{h\SS_n}\ 
$$
thus completing the argument.
\end{proof}

The following result is a topological analog of Lemma \ref{coh-decomp-galois}.

\begin{lem}\label{more-split} Let $K \subseteq \GG_n$ be a closed subgroup and let $K_0 = K \cap \SS_n$.
Suppose the canonical map
$$
K/K_0 \longr \GG_n/\SS_n \cong \Gal
$$
is an isomorphism. Then there is a $\Gal$-equivariant equivalence
$$
\Gal^+ \wedge E^{hK} \to E^{hK_0}.
$$
\end{lem}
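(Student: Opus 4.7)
The plan is to imitate the proof of Lemma \ref{decompose}: construct an explicit $\Gal$-equivariant comparison map and then verify that it is an isomorphism on $E_\ast$, at which point Remark \ref{vanishing-lines-1} will force it to be a weak equivalence in the $K(n)$-local category.

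Since $\SS_n$ is normal in $\GG_n$, the subgroup $K_0 = K \cap \SS_n$ is normal in $K$, and the hypothesis identifies $K/K_0$ with $\Gal$. Consequently, the restricted $K$-action on $E^{hK_0}$ factors through $K/K_0 \cong \Gal$, which endows $E^{hK_0}$ with a canonical $\Gal$-action. Writing $\iota\colon E^{hK} \to E^{hK_0}$ for the natural map, I would define
$$
\phi\colon \Gal_+ \wedge E^{hK} \longrightarrow E^{hK_0}, \qquad g \wedge x \longmapsto g \cdot \iota(x),
$$
which is $\Gal$-equivariant with respect to left translation on $\Gal_+$ (and the trivial action on $E^{hK}$) in the source, and the $K/K_0 \cong \Gal$-action in the target. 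Well-definedness and equivariance of $\phi$ are formal consequences of the fact that $K$ acts on $E^{hK_0}$ through $K/K_0$.

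To show $E_\ast \phi$ is an isomorphism I would use \eqref{eq:EstarEFbis} to identify
$$
E_\ast(\Gal_+ \wedge E^{hK}) \;\cong\; \bigoplus_{\Gal} \map(\GG_n/K, E_\ast), \qquad E_\ast E^{hK_0} \;\cong\; \map(\GG_n/K_0, E_\ast).
$$
A set-theoretic section $\sigma\colon \Gal \to K$ of $K \twoheadrightarrow K/K_0$, together with a choice of coset representatives $\{x_\alpha\}$ for $\GG_n/K$, yields a disjoint decomposition $\GG_n = \bigsqcup_{\alpha, g} x_\alpha \sigma(g) K_0$, and hence a bijection $(\GG_n/K) \times \Gal \to \GG_n/K_0$ that identifies the two $E_\ast$-modules above as abelian groups with $E_\ast$-action. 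Unwinding the formulas for the Morava module structure recalled in Remark \ref{EstarEF}, I expect $E_\ast \phi$ to match this combinatorial identification up to a permutation of the basis indexed by $\Gal$. I expect the bulk of the work to be in this final matching step; everything else is formal.
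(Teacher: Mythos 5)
There is a genuine gap, and it lies in the construction of the comparison map itself. For $g\in K$, the composite $E^{hK}\xrightarrow{\ \iota\ }E^{hK_0}\xrightarrow{\ g\ }E^{hK_0}$ is homotopic to $\iota$: it agrees with $\iota$ precomposed with the action of $g$ on $E^{hK}$, and that action is trivial (up to homotopy) because $g\in K$. So your map $\phi$ is homotopic to the fold map $\Gal^+\wedge E^{hK}\to E^{hK}$ followed by $\iota$. Concretely, using \eqref{eq:EstarEFbis} and the formulas of Remark \ref{EstarEF}, the component of $E_\ast\phi$ on the summand indexed by $g$ is precomposition with the projection $\GG_n/K_0\to\GG_n/K$ followed by right translation by $\sigma(g)\in K$, which acts trivially on $K$-cosets; hence every component induces the \emph{same} map, and
$$
E_\ast\phi\colon \bigoplus_{g\in\Gal}\map(\GG_n/K,E_\ast)\longr \map(\GG_n/K_0,E_\ast),\qquad (\phi_g)_g\mapsto \Bigl(aK_0\mapsto \textstyle\sum_g\phi_g(aK)\Bigr).
$$
Its image consists only of functions pulled back from $\GG_n/K$ and its kernel contains every tuple with $\sum_g\phi_g=0$, so it does not realize your coset bijection $(\GG_n/K)\times\Gal\cong\GG_n/K_0$ and is not an isomorphism; Remark \ref{vanishing-lines-1} therefore cannot be applied. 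The analogy with $KO\to KU$ shows no purely formal argument of this shape can work: translating the complexification map by complex conjugation returns complexification again, and $C_2^+\wedge KO\not\simeq KU$.

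What is missing is the arithmetic input that makes Lemma \ref{decompose} true: there the summand indexed by $g$ maps via the Galois conjugate $g(\omega)$ of the Teichm\"uller unit $\omega\in\WW\cong H^0(\SS_n,E_0)\subseteq\pi_0E^{h\SS_n}$, and the equivalence holds precisely because $\{g(\omega)\}_{g\in\Gal}$ is a $\ZZ_p$-basis of $\WW$ (i.e.\ $\ZZ_p\to\WW$ is Galois); your map omits exactly this twist. The paper's proof of the present lemma avoids choosing such classes again by defining
$$
E^{h\SS_n}\wedge E^{hK}\longr E^{hK_0}\wedge E^{hK_0}\longr E^{hK_0}
$$
via the two restriction maps followed by the multiplication, checking on $E_\ast$ that this is induced by the coset map $\GG_n/K_0\to\GG_n/\SS_n\times\GG_n/K$ (diagonal followed by projections), which is a bijection precisely because $K/K_0\to\GG_n/\SS_n$ is an isomorphism, and then smashing the equivalence of Lemma \ref{decompose} with $E^{hK}$. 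If you want a map literally of the form you wrote, define instead $\phi(g\wedge x)=g(\omega)\cdot\iota(x)$, using the image of $\omega$ in $\pi_0E^{hK_0}$ and the module structure; then modulo $\mm$ the matrix of $E_\ast\phi$ is governed by the conjugates of $\omega$ in $\FF_{p^n}$, and the normal-basis argument from Lemma \ref{decompose} gives the required isomorphism, after which Remark \ref{vanishing-lines-1} finishes the proof.
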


\begin{proof} This follows from Lemma \ref{decompose}. Define a map
$E^{h\SS_n} \wedge E^{hK} \to E^{hK_0}$ by the composition
\begin{equation}\label{ivelosttrack}
E^{h\SS_n} \wedge E^{hK} \to E^{hK_0} \wedge E^{hK_0} \to E^{hK_0}
\end{equation}
where the first map is given by the inclusion and the last map is multiplication.
We have
\begin{align*}
E_\ast(E^{h\SS_n} \wedge E^{hK}) &\cong \map(\GG_n/\SS_n,E_\ast) \otimes_{E_\ast} \map(\GG_n/K,E_\ast)\\
& \cong \map(\GG_n/\SS_n \times \GG_n/K,E_\ast)\ . 
\end{align*}
In $E_\ast$-homology, the map of (\ref{ivelosttrack}) then becomes the map  
\begin{align*}
\map(\GG_n/\SS_n,E_\ast) \otimes_{E_\ast} \map(\GG_n/K,E_\ast) \cong
\map(\GG_n/\SS_n &\times \GG_n/K,E_\ast)\\ 
 &\longr \map(\GG_n/K_0,E_\ast)
\end{align*}
induced by the maps on cosets
$$
\GG_n/K_0 \to \GG_n/K_0 \times \GG_n/K_0 \to \GG_n/\SS_n \times \GG_n/K
$$
where the first map is  the diagonal and the second map is projection. Since $K/K_0 \cong \GG_n/\SS_n$,
this map on cosets is an isomorphism; therefore, the map of (\ref{ivelosttrack}) is an $E_\ast$-isomorphism.
By Remark \ref{vanishing-lines-1} this map is a weak equivalence.
\end{proof}

\begin{rem}\label{coh-implies} Combining Lemma \ref{coh-decomp-galois} and
Lemma \ref{more-split} yields an isomorphism of spectral sequences
$$
\xymatrix{
\WW \otimes H^\ast(K,E_\ast) \ar@{=>}[r] \dto_\cong &
\WW \otimes \pi_\ast E^{hK}\dto^\cong \\
H^\ast(K_0,E_\ast) \ar@{=>}[r]  & \pi_\ast E^{hK_0}
}
$$
where the differentials on the top line are the $\WW$-linear differentials extended
from the spectral sequence for $K$.
\end{rem}

\begin{rem}\label{Goodfor48} At $n=2$ and $p=2$, Lemma \ref{more-split}
applies to the case of $K=G_{48}$; then $K_0 = G_{24}$.
This implies that any of the spectra
$$
X(i,j) \defeq \Sigma^{24i} E^{hG_{48}} \vee \Sigma^{24j} E^{hG_{48}}
$$
has the property that $E_\ast X(i,j) \cong E_\ast E^{hG_{24}}$. But $X(i,j) = \Sigma^{24i}E^{hG_{24}}$
if and only if $i \equiv j$ mod $8$. This means that in some of the arguments we give to prove
our main result  we will have to produce two homotopy classes rather than one. See Theorem
\ref{maintheorem}.
\end{rem}

\section{The homotopy groups of homotopy fixed point spectra}\label{ch:coho}

Here we collect what we will need about the homotopy groups of $E^{hF}$, where $F$ runs through
the finite subgroups of $\mathbb{G}_2^1$ of Remark \ref{the-subs}. We will be working entirely at $n=p=2$
and using the formal group from the supersingular curve of (\ref{ss-curve}). Much of what's needed is in the literature
and we'll do our best to give references. However, much of what is written is for calculations over
Hopf algebroids, which is not quite what we're doing, and the results need translation. In addition, 
many of the results as written include some variant of the phrase
``we neglect the $bo$-patterns''.  We make this thought precise
with the following {\it ad hoc} definition. 

\begin{defn}\label{bo-patterns} Let $F \subseteq \GG_2$ be any finite subgroup containing
$C_2=\{\pm 1\}$. Then we define the {\bf $bo$-patterns} $L_1(\pi_\ast E^{hF})$
of $\pi_\ast E^{hF}$ to be the image of the map in homotopy
$$
\pi_\ast E^{hF} \longr \pi_\ast L_{K(1)}E^{hF}.
$$
We also define the {\bf pure $K(2)$-classes} $M_2(\pi_\ast E^{hF})$ to be the kernel of
the same map.
\end{defn}

Thus we have a short exact sequence
$$
0 \to M_2(\pi_\ast E^{hF}) \to \pi_\ast E^{hF} \to L_1(\pi_\ast E^{hF}) \to 0.
$$
Notice that the $bo$-patterns are defined as a quotient. In most cases, this sequence is not split
as modules over the homotopy groups of spheres.

\begin{rem}\label{boisko} The name $bo$-patterns is something of a misnomer, as $KO$-patterns
would be more accurate. Here $KO$ is $8$-periodic $2$-complete real $K$-theory. In all our
examples we will have an isomorphism
$$
R(F) \otimes_{\mathbb{Z}_2} KO_\ast \cong \pi_\ast L_{K(1)}E^{hF}
$$
for some $\ZZ_2$-algebra $R(F)$ in degree zero.  While $R(F) \otimes_{\mathbb{Z}_2} KO_\ast$ is $8$-periodic, $L_1(\pi_\ast E^{hF})$
will typically have $8k$-periodicity for some $k>1$. As a warning, we mention that this isomorphism
is simply as rings; we are not claiming $L_{K(1)}E^{hF}$ is a $KO$-algebra.
\end{rem}

\begin{rem}\label{whatwerewethinking} Here is more detail, to explain our thinking.

Let us write $S/2^n$ for the $\ZZ/2^n$-Moore spectrum. Then there
is a weak equivalence
$$
L_{K(1)}X \simeq \holim\ v_1^{-1}(X \wedge S/2^n)
$$
and, if $X$ is $K(2)$-local, a corresponding localized Adams-Novikov Spectral Sequence
\begin{equation}\label{localv1ss}
\lim\ v_1^{-1}H^\ast (\GG_2,(E_\ast X)/2^n) \Longrightarrow \pi_\ast L_{K(1)}X.
\end{equation}
This spectral sequence doesn't obviously converge.

Now suppose $C_2 = \{\pm 1\} \subseteq F \subset G_{48}$. Then the spectral sequence (\ref{localv1ss}) for
$X = E^{hF}$ becomes
\begin{equation}\label{localv1ssF}
\lim\ v_1^{-1}H^\ast (F,(E_\ast)/2^n) \Longrightarrow \pi_\ast L_{K(1)}E^{hF}.
\end{equation}
Using Strickland's formulas \cite{Strick} it is possible to show that
$$
\lim\ v_1^{-1}H^\ast (F,(E_\ast)/2^n) \cong \lim\ v_1^{-1}H^\ast (C_2,(E_\ast)/2^n)^{F/C_2}
$$
and that
$$
v_1^{-1}H^\ast (C_2,(E_\ast)/2^n) \cong \WW((u_1))[u^{\pm 2},\eta]
$$
where $\WW((u_1)) = \lim (\WW/2^n)[[u_1^{\pm 1}]]$ and 
$\eta \in H^1(C_2,E_2)$ detects the class of the same name in $\pi_1S^0$. (See (\ref{cohc2}) below.)
From this it follows
that the spectral sequence (\ref{localv1ssF}) is completely determined by the standard differential
$d_3(v_1^2) = \epsilon \eta^3$, where $v_1^2 = u_1^2u^{-2}$ and $\epsilon \in \FF_4((u_1))^\times$ is
a unit. We can conclude that the spectral sequence converges and
$$
\pi_\ast L_{K(1)}E^{hF} \cong \WW((u_1))^{F}\otimes_{\ZZ_2} KO_\ast
$$
and in particular, that $\pi_\ast L_{K(1)}E^{hF}$ and $L_1(\pi_\ast E^{hF})$ are both concentrated in
degrees congruent to $0$, $1$, $2$, and $4$ modulo $8$. It then remains to analyze the pure
$K(2)$-local classes. 

Now, not much of what we just wrote is explicitly in print,  and it would 
take quite a few pages to prove in detail. But we will put together what we can from the existing literature
to cover the main points case-by-case below. See Propositions \ref{coh-C2-v1}, \ref{coh-c4-v1}, and
\ref{coh-G48-v1}
\end{rem}

\subsection{The homotopy groups of $E^{hC_2}$ and $E^{hC_6}$}\label{homotopyC2}

The standard source here is Mahowald-Rezk \cite{MR}, which uses the Hopf algebroid approach, but
also uses the same elliptic curve we have chosen. So the translation is straightforward. Here
is a summary.

The central $C_2\subseteq \GG_2$ acts trivially on $E_0$ and by multiplication by $-1$ on 
$u$; hence
\begin{equation}\label{cohc2}
H^\ast(C_2,E_\ast) \cong \WW[[u_1]][u^{\pm 2},\alpha]/(2\alpha)
\end{equation}
where $\alpha \in H^1(C_2,E_2)$ is the image of the generator of $H^1(C_2,\ZZ\langle\mathrm{sgn}\rangle)$
under the map which sends the generator of the sign representation to $u^{-1}$. Since 
$$
v_1 = u_1u^{-1} \in H^0(C_2,E_2/2)
$$
and the class $u_1\alpha \in H^1(C_2,E_2)$ is the image of $v_1$ under the integral Bockstein, the
class $\eta \in \pi_1S^0$ is detected by $u_1\alpha$. We will also write $\eta = u_1\alpha$. 

\begin{prop}\label{coh-C2-v1} The class $b \defeq u_1^2u^{-2}$ reduces to $v_1^2$ in
$v_1^{-1}H^\ast(C_2,E_\ast/2)$. There is an isomorphism
$$
\WW((u_1))[b^{\pm 1}, \eta]/(2\eta) \cong \lim\ v_1^{-1}H^\ast(C_2,E_\ast/2^n).
$$
\end{prop}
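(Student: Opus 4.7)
The plan is to use the presentation in (\ref{cohc2}), namely $H^*(C_2, E_*) = \WW[[u_1]][u^{\pm 2}, \alpha]/(2\alpha)$, and to split the computation into the $H^0$ contribution and the 2-torsion $H^{\geq 1}$ contribution, then assemble the two.

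For the first claim, I would just observe that $b = u_1^2 u^{-2}$ is integrally defined: $u^{-2}$ is $C_2$-invariant (while $u^{-1}$ is only invariant modulo $2$), and the mod-$2$ reduction of $b$ is literally $(u_1 u^{-1})^2 = v_1^2$. So $b^{\pm 1}$ makes integral sense and inverting $b$ inverts $v_1^2$.

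For the $H^0$ part, $H^0(C_2, E_*/2^n) = (\WW/2^n)[[u_1]][u^{\pm 2}]$, and inverting $b$ amounts to inverting $u_1$ since $u^{\pm 2}$ is already a unit. Passing to $\lim_n$ then produces $\WW((u_1))[b^{\pm 1}]$ directly from the paper's definition of $\WW((u_1))$. For $H^{\geq 1}$, the relation $2\alpha = 0$ makes these groups $2$-torsion, so the inverse limit reduces to the mod-$2$ computation. Once $u_1$ is inverted, the identity $\eta = u_1\alpha$ can be rewritten as $\alpha = u_1^{-1}\eta$, showing that $\eta$, together with its powers $\eta^k$ for $k \geq 1$, serves as a polynomial generator of $v_1^{-1} H^{\geq 1}$ over $v_1^{-1} H^0$. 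Putting the two ranges together yields the claimed ring $\WW((u_1))[b^{\pm 1}, \eta]/(2\eta)$.

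The main technical point to watch is the inverse-limit identification $\lim_n (\WW/2^n)[[u_1]][u_1^{-1}] \cong \WW((u_1))$: elements of the limit are Laurent series $\sum a_i u_1^i$ whose negative tails need only be $2$-adically small rather than truncated at a uniform index, so this step strictly enlarges the plain localization by a $2$-adic completion and is the one place where the answer is not purely formal from (\ref{cohc2}). Once this is handled, every other step is bookkeeping with the presentation already in hand.
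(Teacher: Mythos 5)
Your opening observation (that $b$ is an integral invariant class with $b\equiv v_1^2$ mod $2$, so inverting $v_1$ amounts to inverting $b$, equivalently $u_1$) and your closing identification $\lim_n(\WW/2^n)[[u_1]][u_1^{-1}]\cong\WW((u_1))$ are both fine. The gap is in the middle, and it sits exactly where the content of the proposition lies: the groups $H^\ast(C_2,E_\ast/2^n)$ are not what you say they are. First, $H^0(C_2,E_\ast/2^n)\ne(\WW/2^n)[[u_1]][u^{\pm2}]$: since $2^{n-1}x$ is fixed by $x\mapsto -x$ in $\WW/2^n$, there are additional invariants $2^{n-1}(\WW/2^n)[[u_1]]\,u^{-t}$ in every internal degree $\equiv 2 \bmod 4$; for $n=1$ the $C_2$-action on $E_\ast/2$ is trivial and $H^0(C_2,E_\ast/2)=\FF_4[[u_1]][u^{\pm1}]$ — indeed $v_1=u_1u^{-1}$ itself, which the paper notes lies in $H^0(C_2,E_2/2)$, is one of the classes your formula omits. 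Second, and more seriously, the inference ``these groups are $2$-torsion, so the inverse limit reduces to the mod-$2$ computation'' is false, and if taken literally it contradicts the statement being proved: after inverting $v_1$, $H^\ast(C_2,E_\ast/2)\cong \FF_4((u_1))[u^{\pm1}]\otimes_{\FF_2}\FF_2[\alpha]$ has a nonzero group in every bidegree $(s,t)$ with $t$ even (for instance $(s,t)=(1,0)$), whereas $\WW((u_1))[b^{\pm1},\eta]/(2\eta)$ is concentrated in bidegrees with $t\equiv 2s \bmod 4$. So the mod-$2$ answer is strictly larger than the limit, and your argument never explains why the extra classes disappear.

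The missing step is the analysis of the transition maps in the tower. The coefficient sequences $0\to E_\ast\xrightarrow{2^n} E_\ast\to E_\ast/2^n\to 0$ give short exact sequences $0\to H^s(C_2,E_\ast)/2^n\to H^s(C_2,E_\ast/2^n)\to H^{s+1}(C_2,E_\ast)[2^n]\to 0$, and the reduction maps $E_\ast/2^{n+1}\to E_\ast/2^n$ act on the right-hand (Bockstein) terms through multiplication by $2$, which is zero because $H^{>0}(C_2,E_\ast)$ is killed by $2$ (the same applies to the extra $H^0$-classes above). These are exactly the classes you dropped: they are present for every $n$ and survive $v_1$-localization, but they form towers with zero transition maps, hence vanishing $\lim$ and $\lim^1$. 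What survives is $\lim_n \bigl(v_1^{-1}H^\ast(C_2,E_\ast)\bigr)/2^n\cong \WW((u_1))[u^{\pm2},\alpha]/(2\alpha)$, which equals $\WW((u_1))[b^{\pm1},\eta]/(2\eta)$ after the change of generators $u^{-2}=u_1^{-2}b$, $\alpha=u_1^{-1}\eta$ that you correctly identify. Without this Bockstein/transition-map argument the proof both miscomputes each term of the tower and would yield a strictly larger limit than the proposition asserts.
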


The standard differential $d_3(v_1^2) = \eta^3$ (see Lemma \ref{muhelps} below) forces
a differential
$$
d_3(u^{-2}) = \epsilon u_1\alpha^3
$$
where $\epsilon \in \FF_2[[u_1]]^\times$. Using the Mahowald-Rezk transfer argument \cite[Prop. 3.5]{MR} we have
$\nu \in \pi_3S^0$ is non-zero in $\pi_3E^{hC_2}$ and detected by $\alpha^3$; this in turn
forces a differential
$$
d_7(u^{-4}) = \alpha^7 = \alpha \nu^2.
$$
The spectral sequence collapses at $E_7$ and we have the following result.

\begin{prop}\label{homotopyc2} The homotopy ring $\pi_\ast E^{hC_2}$ is periodic of period 16 with
periodicity generator $e_{16}$ detected by $u^{-8}$. The $bo$-patterns $L_1(E^{hC_2})$ are concentrated in
degrees congruent to  $0$, $1$, $2$, and $4$ modulo 8 and the group of pure $K(2)$-local classes
$M_2(E^{hC_2})$
is generated by the classes
$$
\alpha^{i}e^k_{16},\qquad k \in \ZZ,\  i=3,4,5,6.
$$
\end{prop}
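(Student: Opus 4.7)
\emph{Proof plan.} I would run the Adams-Novikov spectral sequence (\ref{ANSS}) for $F = C_2$, starting from (\ref{cohc2})
$$
H^\ast(C_2, E_\ast) \cong \WW[[u_1]][u^{\pm 2}, \alpha]/(2\alpha),
$$
and establish exactly two families of differentials before reading off $E_\infty$ and separating the $bo$-patterns from the pure $K(2)$-classes.

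First I would propagate $d_3$. Lemma \ref{muhelps} (invoked in the text immediately preceding the proposition) gives the universal $d_3(v_1^2)=\eta^3$; using $v_1 = u_1 u^{-1}$ and $\eta = u_1 \alpha$, this translates to $d_3(u^{-2}) = \epsilon\, u_1 \alpha^3$ for a unit $\epsilon \in \FF_2[[u_1]]^\times$. The Leibniz rule then gives $d_3(u^{-2k}) = k\epsilon\, u_1 \alpha^3 u^{-2k+2}$, which vanishes iff $k$ is even because $2\alpha = 0$; on $E_4$ the filtration-zero line is therefore $\WW[[u_1]][u^{\pm 4}]$, and the image of $d_3$ kills $u_1 \alpha^j$ for every $j \geq 3$. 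Second, the Mahowald-Rezk transfer argument \cite[Prop.~3.5]{MR} forces $\nu \in \pi_3 S^0$ to remain non-zero in $\pi_3 E^{hC_2}$; since the only class of bidegree $(3,6)$ on $E_4$ is $\alpha^3$, it must detect $\nu$. A multiplicative comparison with $\pi_\ast S^0$ then pins down $d_7(u^{-4}) = \alpha^7 = \alpha\nu^2$, and Leibniz propagates this to $d_7(u^{-4k}) = k \alpha^7 u^{-4k+4}$, vanishing iff $k$ is even.

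The spectral sequence now collapses at $E_8$: the horizontal vanishing line of Remark \ref{vanishing-lines} together with the truncated $\alpha$-tower (lengths at most $6$ in positive filtration) leaves no room for longer differentials. On the filtration-zero line $\WW[[u_1]][u^{\pm 8}]$ provides a unit $e_{16} \in \pi_{16} E^{hC_2}$ detected by $u^{-8}$, hence $16$-fold periodicity. In positive filtrations the survivors are $u^{-8k} \alpha^j$ with $j = 1,\ldots,6$ (capped by $d_7$), subject to $u_1 \alpha^j = 0$ for $j \geq 3$ (the $d_3$-image). To isolate the $bo$-patterns I would apply $v_1$-inversion and invoke Proposition \ref{coh-C2-v1}: the $v_1^{-1}$-spectral sequence has $E_2 = \WW((u_1))[b^{\pm 1},\eta]/(2\eta)$ and a single propagating differential $d_3(b) = u_1^2\eta^3$, collapsing to the standard $KO$-pattern and identifying $\pi_\ast L_{K(1)} E^{hC_2}$ with $\WW((u_1)) \otimes_{\ZZ_2} KO_\ast$, concentrated in degrees $\equiv 0,1,2,4 \pmod 8$. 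The pure $K(2)$-classes are the kernel of $v_1$-inversion, i.e.\ the $u_1$-power-torsion on $E_\infty$: since $u_1\alpha = \eta$ and $u_1\alpha^2 = \eta^2$ survive into the $bo$-pattern while $u_1\alpha^j = 0$ for $j \geq 3$, what remains is exactly $\alpha^i e_{16}^k$ with $i = 3,4,5,6$, as claimed.

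The main obstacle is ruling out hidden differentials beyond $d_7$ and hidden extensions mixing $bo$-patterns with pure $K(2)$-classes; sparseness of $E_8$ combined with the horizontal vanishing line handles the former, and any hidden extension across the $v_1$-inversion would have to be $v_1$-torsion, which is already absorbed into the pure $K(2)$-summand.
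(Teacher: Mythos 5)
Your proposal follows essentially the same route as the paper: the text preceding the proposition derives $d_3(u^{-2})=\epsilon u_1\alpha^3$ from the standard $d_3(v_1^2)=\eta^3$ of Lemma \ref{muhelps}, uses the Mahowald--Rezk transfer argument to see $\nu$ detected by $\alpha^3$, deduces $d_7(u^{-4})=\alpha^7=\alpha\nu^2$, and lets the spectral sequence collapse, with the $bo$-patterns identified via the $v_1$-inverted computation of Proposition \ref{coh-C2-v1}. Your write-up matches this step for step (your Leibniz bookkeeping and the separation of pure $K(2)$-classes as the $v_1$-torsion are just the details the paper leaves implicit), so it is correct and not a genuinely different argument.
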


To get the homotopy of $E^{hC_6}$, with $C_6 = C_2 \times \FF_4^\times$,
we need to know the action of $\FF_4^\times$. We can use 
Strickland's calculations \cite{Strick} or interpret the Mahowald-Rezk results. 
Let $\omega \in \FF_4^\times$ be a primitive cube root of unity. Then, $\omega_\ast u=
\omega u$ and $\omega_\ast u_1 = \omega u_1$; it follows that $\omega_\ast \alpha = \omega^{-1} \alpha$.
The next result can be deduced from these formulas and the fact that
$$
\pi_\ast E^{hC_6} \cong (\pi_\ast E^{hC_2})^{\FF_4^\times}. 
$$

\begin{prop}\label{homotopyc6} The homotopy ring $\pi_\ast E^{hC_6}$ is periodic of period 48 with
periodicity generator $e_{16}^3$ detected by $u^{-24}$. The $bo$-patterns $L_1(E^{hC_6})$ are concentrated in
degrees congruent to  $0$, $1$, $2$, and $4$ modulo 8 and the group of pure $K(2)$-local classes
$M_2(E^{hC_6})$ is generated by the classes
$$
e_{16}^{3k}\alpha^3\qquad e_{16}^{3k}\alpha^6\qquad e_{16}^{3k+1}\alpha^4\qquad e_{16}^{3k+2}\alpha^5
$$
of degrees $48k+3$, $48k+6$, $48k+20$, and $48k+37$ respectively.
Furthermore, the homotopy class $\nu \in \pi_3S^0$ is detected by the class $\alpha^3$ and the
class $\kappabar \in \pi_{20}S^0$ is detected by $e_{16}\alpha^4$.
\end{prop}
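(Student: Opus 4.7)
The identity $\pi_\ast E^{hC_6} \cong (\pi_\ast E^{hC_2})^{\FF_4^\times}$ supplied in the excerpt reduces everything to taking invariants of the $\omega\in\FF_4^\times$-action on the ring described in Proposition \ref{homotopyc2}; this identity itself follows from collapse of the degree-$3$ homotopy fixed point spectral sequence $E^{hC_2}\to E^{hC_6}$, since $|\FF_4^\times|=3$ is a unit in $\pi_\ast E^{hC_2}$.

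First I would read off $\omega$-weights on the generators. The formulas recorded just above the statement give $\omega_\ast\alpha=\omega^{-1}\alpha$, and since $e_{16}$ is detected by $u^{-8}$ we have $\omega_\ast e_{16} = \omega^{-8}e_{16}=\omega\, e_{16}$ (with no filtration ambiguity, since $e_{16}$ lifts a unit on the $E_\infty$-page to a unit in homotopy, and the $\WW$-linear action is determined by its leading term). Consequently the monomial $e_{16}^k\alpha^i$ has weight $\omega^{k-i}$ and is invariant precisely when $k\equiv i\pmod{3}$. Applying this criterion to the four families of pure $K(2)$-generators $\alpha^i e_{16}^k$ with $i\in\{3,4,5,6\}$ listed in Proposition \ref{homotopyc2} selects exactly the four families claimed, with the stated degrees $48m+3,\,48m+6,\,48m+20,\,48m+37$. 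For the $bo$-patterns I would note that Proposition \ref{homotopyc2} already concentrates them in degrees $0,1,2,4\pmod{8}$, and restricting to $C_3$-invariants only thins the module without creating new congruence classes. Periodicity of period $48$ with generator $e_{16}^3$ is then immediate: $e_{16}^3$ has weight $\omega^3=1$ and is a unit in $\pi_\ast E^{hC_2}$, hence a unit in the invariant subring.

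It remains to identify the Hurewicz images. The unit map $S^0 \to E^{hC_6}\to E^{hC_2}$ is a map of ring spectra, so every Hurewicz image is automatically $\FF_4^\times$-invariant. Thus the identification $\nu\mapsto\alpha^3$ in $\pi_\ast E^{hC_2}$ (provided by the Mahowald--Rezk transfer argument recalled just after Proposition \ref{coh-C2-v1}) transfers verbatim to $E^{hC_6}$, since $\alpha^3$ is already invariant; and $\kappabar\in\pi_{20}S^0$ must be detected by the unique pure $K(2)$-generator of $\pi_{20} E^{hC_6}$ in filtration $\geq 4$, namely $e_{16}\alpha^4$. The principal obstacle in carrying out this plan cleanly is this last identification: showing that $\kappabar$ has non-trivial, non-$bo$-pattern Hurewicz image in $\pi_{20}E^{hC_6}$ requires the Mahowald--Rezk calculations, or equivalently comparison with the Adams--Novikov spectral sequence for the sphere via Remark \ref{not-p-typical}. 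The $C_3$-descent itself is entirely formal; the real content lies in locating the image of $\pi_\ast S^0$ inside $\pi_\ast E^{hC_2}$, which is imported from the existing literature.
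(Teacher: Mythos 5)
Your proposal is correct and follows the paper's own route: the paper likewise deduces the result from the action formulas $\omega_\ast u=\omega u$, $\omega_\ast u_1=\omega u_1$, $\omega_\ast\alpha=\omega^{-1}\alpha$ together with the identification $\pi_\ast E^{hC_6}\cong(\pi_\ast E^{hC_2})^{\FF_4^\times}$, importing the detection of $\nu$ (and of $\kappabar$) from the Mahowald--Rezk calculations. Your extra remarks---justifying the fixed-point identity by the vanishing of higher $C_3$-cohomology and pinning down the weight of $e_{16}$---merely make explicit what the paper leaves implicit, and your weight count $k\equiv i\pmod 3$ reproduces exactly the four families and degrees stated.
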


\subsection{The homotopy groups of $E^{hC_4}$}

The standard reference for this calculation is Behrens-Ormsby \cite{BO} \S 2.1, especially
Theorem 2.1.3 and Perspective 2, after Remark 2.1.9. See also their Figure 2.
Again they use Hopf algebroids. Here we hit another small problem: the
supersingular elliptic curve they use is different from the one we have chosen, and thus they have
a different action of $C_4$ on a different version of Morava $E$-theory. There are two possible solutions.
One is to do the calculations over again, using Strickland's formulas. The other is to notice that the two supersingular
curves become isomorphic over the algebraically  closed field $\overline{\FF}_2$ and to use descent to make the
calculations. Using either method we obtain the following result. Let $i \in C_4$ be a generator and
let 
$$
z = u_1+ i_\ast u_1 \in H^0(C_4,E_0).
$$
There are further cohomology classes 
$$
b_2 \in H^0(C_4,E_4)\qquad \delta \in H^0(C_4,E_8)
$$
and
$$
\gamma \in H^1(C_4,E_6)\qquad \xi \in H^2(C_4,E_8).
$$
Let $\eta \in H^1(C_4,E_2)$ and $\nu \in H^1(C_4,E_4)$ be the images of the like-named classes from
the $BP$-based Adams-Novikov Spectral Sequence (see Remark \ref{not-p-typical}).

\begin{prop}\label{coh-c4} There is an isomorphism
$$
H^\ast (C_4,E_\ast) \cong \WW[[z]][b_2,\delta^{\pm 1},\eta,\nu,\gamma, \xi]/R
$$
where $R$ is the ideal of relations given by
$$
2\eta = 2\gamma = 4\xi = 0
$$
and
$$
b_2^2 \equiv  z^2\delta \qquad \mathrm{mod}\ 2
$$
and 
$$
\delta \eta^2  = b_2\xi = \gamma^2 \qquad b_2 \gamma = z \delta\eta
$$
and
$$
b_2\eta = z\gamma\qquad \gamma\eta = z\xi 
$$
and the final relations involving $\nu$:
$$
\nu^2 = 2\xi\qquad  2\nu=z\nu=\eta\nu = b_2\nu=\gamma\nu= 0
$$
\end{prop}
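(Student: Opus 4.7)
My plan is to compute $H^\ast(C_4,E_\ast)$ by the direct cyclic-group recipe, using Strickland's explicit formulas \cite{Strick} for the action of $G_{24}$ on $E_\ast$, and to cross-check the presentation at the end by Galois descent against the Behrens--Ormsby computation in \cite{BO}.

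First I would fix a generator $i \in C_4 \subseteq Q_8 \subseteq G_{24}$ of order $4$ and, via Strickland's formulas applied to the curve (\ref{ss-curve-def}), write down closed expressions for $i_\ast u_1 \in E_0$ and $i_\ast u \in E_{-2}$. From these, the class $z \defeq u_1 + i_\ast u_1 \in H^0(C_4,E_0)$ is manifestly invariant; the further invariants $b_2 \in H^0(C_4,E_4)$ and $\delta \in H^0(C_4,E_8)$ I would identify as the standard elementary symmetric combinations of the $i$-orbits of $u_1 u^{-2}$ (or, equivalently, as pullbacks of Weierstrass-type invariants from the underlying elliptic curve). The congruence $b_2^2 \equiv z^2\delta \pmod 2$ then drops out by comparing leading terms mod $2$, and the invertibility of $\delta$ in the $K(2)$-local setting reflects the supersingularity.

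Second, to pick up the higher cohomology, I would run the standard $2$-periodic $\ZZ[C_4]$-resolution of $\ZZ$ with differentials alternating between multiplication by $i-1$ and multiplication by the norm $N = 1+i+i^2+i^3$. Applying $\Hom_{C_4}(-,E_t)$ produces, for each internal degree $t$, an explicit cochain complex whose cohomology is either $E_t^{C_4}/N(E_t)$ (in even cohomological degrees) or $\ker(N)/(i-1)E_t$ (in odd degrees). The classes $\eta \in H^1(C_4,E_2)$ and $\nu \in H^1(C_4,E_4)$ I would pull back along the map of Adams--Novikov spectral sequences of Remark \ref{not-p-typical} (so that they detect the homotopy classes of the same name), and the remaining classes $\gamma \in H^1(C_4,E_6)$ and $\xi \in H^2(C_4,E_8)$ arise as the next invariant generators produced by the cyclic-group machine.

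Third, I would verify the multiplicative relations by lifting $z,b_2,\delta,\eta,\nu,\gamma,\xi$ to explicit cocycles in the cobar-style cochain complex and computing cup products directly. The $2$-torsion relations $2\eta=2\gamma=4\xi=0$ are immediate from cyclic cohomology of a finite $2$-group. The multiplicative relations among $b_2,\delta,\eta,\gamma,\xi$ (namely $\delta\eta^2 = b_2\xi = \gamma^2$, $b_2\gamma = z\delta\eta$, $b_2\eta = z\gamma$, $\gamma\eta = z\xi$) should follow from simple cocycle manipulations, using that $b_2 - zu_1 u^{-2}$ is divisible by $2$ in $E_\ast$. The $\nu$-relations $\nu^2 = 2\xi$ and $2\nu = z\nu = \eta\nu = b_2\nu = \gamma\nu = 0$ I would handle last, since $\nu$ is the subtlest class: it is forced to be a $2$-torsion element annihilated by the maximal ideal $(z,b_2,\eta,\gamma)$ of the ``$bo$-pattern'' part, and $\nu^2 = 2\xi$ is the familiar relation lifted from the sphere via the ANSS comparison.

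The hardest part will be certifying that the list of relations $R$ is \emph{complete} rather than merely correct, i.e., that the displayed presentation is not a proper quotient of $H^\ast(C_4,E_\ast)$. For this I would use a two-pronged check: a Poincar\'e series count against the cyclic-group cochain complex in each bidegree $(s,t)$, and Galois descent from the analogous calculation in \cite{BO}. The Behrens--Ormsby curve and ours become isomorphic over $\overline{\FF}_2$, and since all of our structure is $\WW$-linear with $\WW \subseteq \overline{\WW}$ faithfully flat and $\Gal$-equivariant (as in Lemma \ref{coh-decomp-galois}), matching the two presentations term by term after extension of scalars both confirms completeness and pins down the precise coefficients in relations like $b_2^2 \equiv z^2\delta \bmod 2$.
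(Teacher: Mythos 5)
Your proposal is sound in outline, but it takes the route the paper only mentions as possible and does not actually follow: a direct computation of $H^\ast(C_4,E_\ast)$ from Strickland's formulas via the standard $2$-periodic resolution, with \cite{BO} relegated to a cross-check. The paper's own proof is far shorter: it takes the presentation already established in Perspective 2 (after Remark 2.1.9) of \cite{BO} and converts it into the stated one by a three-step translation --- set $\tilde{\gamma}=\gamma$, $\tilde{j}=z-2$, $\beta=\delta^{-1}\xi$; invert $\delta$; and complete at the maximal ideal of $H^0(C_4,E_0)$ --- the point being that the Behrens--Ormsby supersingular curve and the one used here become isomorphic over $\overline{\FF}_2$, so descent transports their calculation to this setting. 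Your route buys explicit cocycle representatives and independence from the conventions of \cite{BO}, but the cost lands exactly where you flag it: the $C_4$-action coming from Strickland's formulas on $\WW[[u_1]][u^{\pm 1}]$ is realistically only accessible modulo powers of the maximal ideal, so the ``closed expressions'' for $i_\ast u_1$ and $i_\ast u$, the cocycle-level cup products, and above all the completeness of the relation ideal $R$ are each substantial computations rather than routine verifications (your identification of $b_2$ and $\delta$ as elementary symmetric functions of the orbit of $u_1u^{-2}$ also needs care with degrees: the full $C_4$-norm of $u^{-2}$ lives in internal degree $16$, not $8$). Since your concluding Galois-descent ``sanity check'' against \cite{BO} is, in substance, the paper's entire argument, the fair summary is that your plan is correct but does much more work than necessary, and the step you treat as a fallback is the one that actually carries the proof.
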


\begin{proof} 
This can be obtained from Perspective 2 (after Remark 2.1.9) of \cite{BO} by a three-step
process. First set $\tilde{\gamma}=\gamma$, $\tilde{j}=z-2$, and $\beta=\delta^{-1}\xi$. Second,
invert $\delta$. Finally, complete at the maximal ideal of $H^0(C_4,E_0)$.
\end{proof}

This result is displayed in Figure 1 below, presented in the standard Adams format: the $x$-axis
is $t-s$; the $y$-axis is $s$. In this chart, the square box $\Box$ represents
a copy of $\WW[[z]]$, the circle $\circ$ a copy of $\FF_4[[z]]$, and the crossed circle
$\otimes$ a copy of $\WW[[z]]/(4,2z)$ generated by a class of the form $\xi^j\delta^i$.
A solid dot is a copy of $\FF_4$ annihilated by $z$; it is generated by a class of the form $\xi^j\nu$.
The solid lines are multiplication by
$\eta$ or $\nu$, as needed, and a dashed line indicates that $x\eta = z y$, where $x$ and $y$
are generators in the appropriate bidegree.

\begin{figure}[h]\label{cohc4-fig}
\centering
\includegraphics{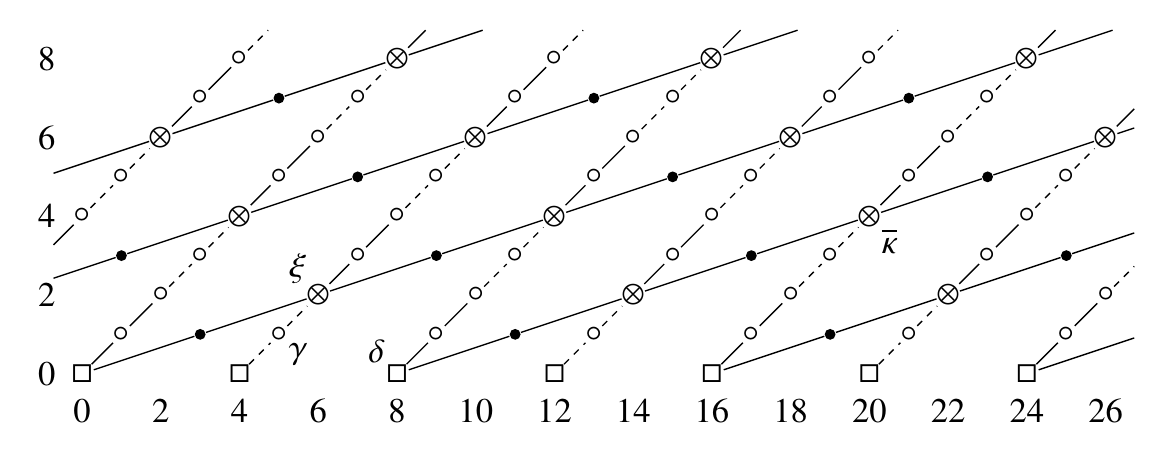}
\caption{The cohomology of $C_4$}
\end{figure}

Notice that $H^\ast (C_4,E_\ast)$ is $8$-periodic with the periodicity class $\delta$ and that 
$$
\times \xi: H^s(C_4,E_\ast) \to H^{s+2}(C_4,E_{\ast + 8})
$$
is onto for $s \geq 0$ and an isomorphism
for $s > 0$. In fact $\delta^{-1}\xi \in H^2(C_4,E_0)$ is, up to multiplication by a unit, the image
of the periodicity class for the group cohomology of $C_4$ under the inclusion of trivial coefficients:
$$
\ZZ/4\cong H^4(C_4,\ZZ_2) \cong H^2(C_4,\WW) \to H^s(C_4,E_0).
$$

\begin{prop}\label{coh-c4-v1} Modulo $2$ we have an equivalence $b_2 \equiv v_1^2$ and then
an isomorphism
$$
\WW((z))[b_2^{\pm 1}, \eta]/(2\eta) \cong \lim\ v_1^{-1}H^\ast(C_4,E_\ast/2^n).
$$
\end{prop}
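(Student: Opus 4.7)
The plan is to exploit the explicit presentation of $H^\ast(C_4,E_\ast)$ from Proposition \ref{coh-c4}, first identifying $b_2$ with $v_1^2$ modulo $2$, and then performing the localization directly on this presentation.

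For the identification $b_2 \equiv v_1^2 \pmod 2$: the class $v_1 \in E_2/2$ is the image of the Hazewinkel generator under the classifying map $BP_\ast/2 \to E_\ast/2$ (Remark \ref{not-p-typical}), and as such is $\GG_2$-invariant modulo $2$. In particular, $v_1^2 \in E_4/2$ defines a class in $H^0(C_4,E_4/2)$. On the other hand, $b_2 \in H^0(C_4,E_4)$ is characterized, up to a unit, by the relation $b_2^2 \equiv z^2\delta \pmod 2$ from Proposition \ref{coh-c4}. Combining this relation with the explicit formula $v_1^4 = u_1^4 u^{-4}$ and the definitions $z = u_1 + i_\ast u_1$ and $\delta \in H^0(C_4,E_8)$ (in which $\delta$ detects $u^{-4}$ up to units modulo $2$) forces $b_2$ and $v_1^2$ to agree up to a unit in $\FF_4[[z]]$, which we absorb into $b_2$ once and for all.

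With this in hand, the key calculation is what happens when we invert $b_2$ in the presentation of Proposition \ref{coh-c4}. The relation $b_2\nu = 0$ kills $\nu$; the relations $b_2\gamma = z\delta\eta$ and $\delta\eta^2 = b_2\xi$ allow us to solve $\gamma = b_2^{-1}z\delta\eta$ and $\xi = b_2^{-1}\delta\eta^2$, so $\gamma$ and $\xi$ cease to be independent generators. Next, the relation $b_2^2 \equiv z^2\delta \pmod 2$ read in the form $z^2 = b_2^2\delta^{-1} \pmod 2$ shows that once $b_2$ is inverted, $z^2$ is a unit modulo $2$; since a unit mod $2$ lifts to a unit $2$-adically in a $2$-complete ring, $z$ itself becomes a unit in the eventual limit. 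Once $z$ is invertible, the integral equation $b_2^2 = z^2\delta + 2h$ pins $\delta$ down as well in terms of $z$ and $b_2$, so the remaining generators over $\WW$ are $z^{\pm 1}$, $b_2^{\pm 1}$, $\eta$ subject only to $2\eta = 0$.

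Taking the inverse limit $\lim_n$ of the mod-$2^n$ versions upgrades $\WW[[z]]$ with $z$ inverted to the Laurent series ring $\WW((z)) = \lim_n (\WW/2^n)[[z^{\pm 1}]]$ of Remark \ref{whatwerewethinking}, yielding the claimed isomorphism $\WW((z))[b_2^{\pm 1},\eta]/(2\eta)$. The main obstacle will be the identification $b_2 \equiv v_1^2 \pmod 2$ and, hand in hand with it, a sufficiently sharp form of the mod-$2$ relation $b_2^2 \equiv z^2\delta$ to conclude that $z$ becomes a unit after inverting $b_2$; both reduce to explicit computations with Strickland's formulas for the $C_4$-action on the coordinates of the universal deformation (\ref{ss-curve-def}).
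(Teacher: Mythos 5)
Your route---formally localizing the presentation of Proposition \ref{coh-c4}---is not the one the paper gestures at (Remark \ref{whatwerewethinking} reduces to the central $C_2$, showing $\lim v_1^{-1}H^\ast(C_4,E_\ast/2^n)\cong \bigl(\lim v_1^{-1}H^\ast(C_2,E_\ast/2^n)\bigr)^{C_4/C_2}$ with the inner term computed as $\WW((u_1))[u^{\pm2},\eta]$ via Strickland's formulas), and the purely formal part of your manipulation is fine: inverting $b_2$ kills $\nu$, makes $\gamma$ and $\xi$ decomposable, and modulo $2$ makes $z$ a unit. But as a proof of the stated isomorphism it has a genuine gap: the object being computed is $\lim_n v_1^{-1}H^\ast(C_4,E_\ast/2^n)$, and $H^\ast(C_4,E_\ast/2^n)$ is \emph{not} the mod-$2^n$ reduction of $H^\ast(C_4,E_\ast)$. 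Since $H^{>0}(C_4,E_\ast)$ is full of $2$-torsion ($\eta,\nu,\gamma,\xi$ and their products), the Bockstein long exact sequences contribute classes with no integral lift, most importantly $v_1$ itself and its odd powers; after inverting $v_1$, the mod-$2$ cohomology has the shape $\FF_4((z))[v_1^{\pm1},\eta]$ (compare Proposition \ref{coh-G48-v1} for $G_{48}$), strictly larger than the mod-$2$ reduction of your localized integral ring. The entire content of the proposition is that these odd-power-of-$v_1$ classes are not compatible in the tower over $n$ (their integral Bocksteins are nonzero), so only powers of $b_2\equiv v_1^2$ survive to the inverse limit. Your final step, ``taking the inverse limit of the mod-$2^n$ versions,'' treats the terms of the tower as if they were $b_2^{-1}H^\ast(C_4,E_\ast)\otimes\ZZ/2^n$ and offers no control of the extra torsion classes, nor of $\lim$/$\lim^1$ for the actual system; this is the missing idea, and it is exactly where the $C_2$-reduction or an explicit Bockstein argument is needed.

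Separately, the identification $b_2\equiv v_1^2$ modulo $2$ is not actually established. Knowing that $b_2^2\equiv z^2\delta$ pins $b_2$ down only once one also knows that $v_1^4=u_1^4u^{-4}$ equals $z^2\delta$ modulo $2$, i.e. one needs the mod-$2$ expressions of $z=u_1+i_\ast u_1$ and $\delta$ in terms of $u_1,u$---precisely the Strickland-formula computation you defer; until that is done the ``forcing'' argument is circular. Also, rescaling $b_2$ by a unit of $\FF_4[[z]]$ ``once and for all'' is not really available, since $b_2$ is already fixed by the other relations of Proposition \ref{coh-c4} (and if the computation is carried out, no rescaling is needed: in characteristic $2$ an equality of squares in a reduced ring gives equality on the nose). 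So the proposal describes a plausible alternative scaffolding, but both of its load-bearing steps---the identification of $b_2$ with $v_1^2$ and the passage from integral cohomology to the limit of localized mod-$2^n$ cohomologies---are asserted rather than proved.
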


The differentials and extensions in this spectral sequence go exactly as in Behrens and Ormsby
\cite{BO}, Theorem 2.3.12. We end with the following result.

\begin{prop}\label{homotopyc4} The homotopy ring $\pi_\ast E^{hC_4}$ is periodic of period 32 with
periodicity generator $e_{32}$ detected by $\delta^4$. The $bo$-patterns $L_1(E^{hC_4})$ are concentrated in
degrees congruent to  $0$, $1$, $2$, and $4$ modulo 8 and the group of pure $K(2)$-local classes 
$M_2(E^{hC_4})$
is generated by the classes $e_{32}^kx$ where $x$ is from the following table
\begin{center}
\begin{tabular}{|c|c|c|c|} 
\hline
Class& Degree & Order & $E_2$-name \\ 
\hline
a&$1$&$2$&$\delta^{-1}\xi\nu$\\
\hline
$a\eta $&$2$&$2$&$\delta^{-2}\xi^2\nu^2$\\
\hline
$\nu$ & $3$ &$4$ & $\nu$ \\ 
\hline
$a\nu $&$4$&$2$&$\delta^{-1}\xi \nu^2$\\
\hline
$\nu^2$ & $6$ & $2$ &$\nu^2$ \\ 
\hline
$\epsilon$ & $8$ & $2$& $\delta^{-2}\xi^4$ \\ 
\hline
$\nu^3=\eta\epsilon$ & $9$ & $2$& $\kappabar^2\delta^{-5}\xi\nu$ \\ 
\hline
$\kappa$ & $14$ & $2$ & $\delta\nu^2$ \\ 
\hline
$b$ & $19$ & $4$ & $\delta^2\nu$ \\ 
\hline
$\overline{\kappa}$ & $20$ & $4$ & $\delta\xi^2$ \\ 
\hline
$\eta\overline{\kappa}$ & $21$ & $2$ &$\eta\xi^3$ \\ 
\hline
$b\nu$ & $22$ & $4$ &$\delta^2\nu^2$ \\ 
\hline
$c$ & $27$ & $2$ & $\delta^2\gamma\xi$ \\ 
\hline
$c\eta $ & $28$ & $2$ & $\delta^{-1}\xi^6$ \\ 
\hline
\end{tabular}
\end{center}
\end{prop}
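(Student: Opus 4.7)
The plan is to run the Adams--Novikov spectral sequence
$$
H^s(C_4, E_t) \Longrightarrow \pi_{t-s} E^{hC_4}
$$
with $E_2$-page given by Proposition \ref{coh-c4} (displayed in the figure above). Following Definition \ref{bo-patterns}, the computation splits into a $v_1$-periodic part producing the $bo$-patterns $L_1(\pi_\ast E^{hC_4})$ and a $v_1$-torsion part producing the pure $K(2)$-classes $M_2(\pi_\ast E^{hC_4})$.

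For the $v_1$-periodic part I would begin with the $v_1$-inverted $E_2$-page $\WW((z))[b_2^{\pm 1}, \eta]/(2\eta)$ of Proposition \ref{coh-c4-v1} and import the standard differential $d_3(v_1^2) = \eta^3$ from the sphere via the map from the $BP$-based ANSS of Remark \ref{not-p-typical}. Since $b_2 \equiv v_1^2 \pmod{2}$, this gives $d_3(b_2^{2k+1}) = (2k+1)\,b_2^{2k}\eta^3$, the spectral sequence collapses at $E_4$, and the outcome is a ring isomorphic to $\WW((z)) \otimes_{\ZZ_2} KO_\ast$. Since $KO_\ast$ is concentrated in degrees $0, 1, 2, 4 \pmod{8}$, so are the $bo$-patterns, establishing the second claim.

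For the pure $K(2)$-classes I would proceed differential-by-differential on the $v_1$-torsion subring. The $d_3$-differentials propagate from $d_3(b_2) = \eta\gamma$ (forced by the $v_1$-local calculation and the relation $b_2\eta = z\gamma$) and kill classes divisible by positive powers of $v_1$, leaving the $\nu$-tower and certain $\xi$-divisible classes. The further differentials $d_5$ and $d_7$ are then determined by three constraints: (i) $\nu \in \pi_3 S^0$ must lift to a class detected by $\nu \in H^1(C_4,E_4)$, using Remark \ref{not-p-typical}; (ii) $\delta^4$ must be a permanent cycle, because it detects $\Delta^2 \in H^0(G_{48},E_{192})$ which is known to be a permanent cycle in $\pi_\ast \mathrm{Tmf}$; and (iii) the multiplicative relations of Proposition \ref{coh-c4}. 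Reading off $E_\infty$ then gives $32$-periodicity with generator $e_{32}$ detected by $\delta^4$, and identifies the surviving classes with those listed. The named classes are then identified by restriction and multiplicativity; for example, $\kappabar$ in degree $20$ must be detected by $\delta\xi^2$, compatibly with its detection by $e_{16}\alpha^4$ in $\pi_\ast E^{hC_6}$ under restriction from $C_6$ to $C_4$ (Proposition \ref{homotopyc6}), and $\kappa$ in degree $14$ is detected by $\delta\nu^2$ using $\nu^2 = 2\xi$.

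The main obstacle is the sheer bookkeeping of differentials and multiplicative extensions on a large, doubly-infinite $E_2$-page. The cleanest route, as the text preceding the statement already suggests, is to borrow the complete differential and extension pattern of Behrens--Ormsby (Theorem 2.3.12 of \cite{BO}); their computation is for a different but $\overline{\FF}_2$-isomorphic supersingular curve, so faithfully flat descent over the unramified extension $\ZZ_2 \subseteq \WW$ transports their answer to ours, with only the cosmetic change of variables $\tilde\gamma = \gamma$, $\tilde{j} = z - 2$, $\beta = \delta^{-1}\xi$ recorded in the proof of Proposition \ref{coh-c4}.
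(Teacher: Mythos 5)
Your bottom line coincides with the paper's: the paper offers no independent computation here, but simply identifies the $E_2$-term (Proposition \ref{coh-c4}, via Strickland's formulas or descent) and then declares that ``the differentials and extensions in this spectral sequence go exactly as in Behrens and Ormsby \cite{BO}, Theorem 2.3.12.'' Your closing paragraph is exactly that argument, and your $v_1$-local analysis of the $bo$-patterns matches the sketch in Remark \ref{whatwerewethinking} and Proposition \ref{coh-c4-v1}. So in outline you are doing what the paper does.

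However, three of your supporting statements are off and would derail the by-hand version you sketch. First, the descent is not along $\ZZ_2 \subseteq \WW$: that extension has Galois group $\Gal(\FF_4/\FF_2)$ and relates $G_{48}$- to $G_{24}$-type statements (Lemma \ref{more-split}); it cannot identify two non-isomorphic curves over $\FF_4$. What the paper means is Galois descent from $\overline{\FF}_2$ (equivalently from $W(\overline{\FF}_2)$-coefficients), where the two supersingular curves and their $C_4$-actions become isomorphic. Second, $d_3(b_2) = \eta\gamma$ is impossible on bidegree grounds: $b_2 \in H^0(C_4,E_4)$ has $d_3$-target $H^3(C_4,E_6)$, while $\eta\gamma \in H^2(C_4,E_8)$; the $v_1$-local input forces a differential of the form $d_3(b_2) \doteq \eta^3$ (up to unit and $z$-torsion correction), consistent with $d_3(v_1^2)=\eta^3$. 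Third, your constraint (ii) is degree-inconsistent: $\delta^4 \in H^0(C_4,E_{32})$ cannot detect a restriction of $\Delta^2 \in H^0(G_{48},E_{48})$ (nor of $\Delta^8 \in H^0(G_{48},E_{192})$), so $32$-periodicity cannot be imported from $Tmf$ this way; in \cite{BO} it is an output of the calculation (the fact that $\delta^4$, and no smaller power of $\delta$, survives). None of this damages your final route through \cite{BO}, but as written the independent argument would not go through.
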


The pure $K(2)$ classes in $\pi_\ast E^{hC_4}$ are presented in the Figure 2: the horizontal bar is multiplication by
$\nu$, the diagonal bar is multiplication by $\eta$. Note that $\kappabar\eta^2 = 2b\nu$.
The horizontal scale is the degree of the element, but the vertical scale has no meaning. 
Many of the additive and multiplicative relations are given by exotic extensions in this
spectral sequence and the meaning of the original Adams-Novikov filtration becomes
attenuated as a result; see Behrens-Ormsby \cite{BO}, especially figure 9, for details.

\begin{figure}[h]
\centering
\includegraphics[width=\textwidth]{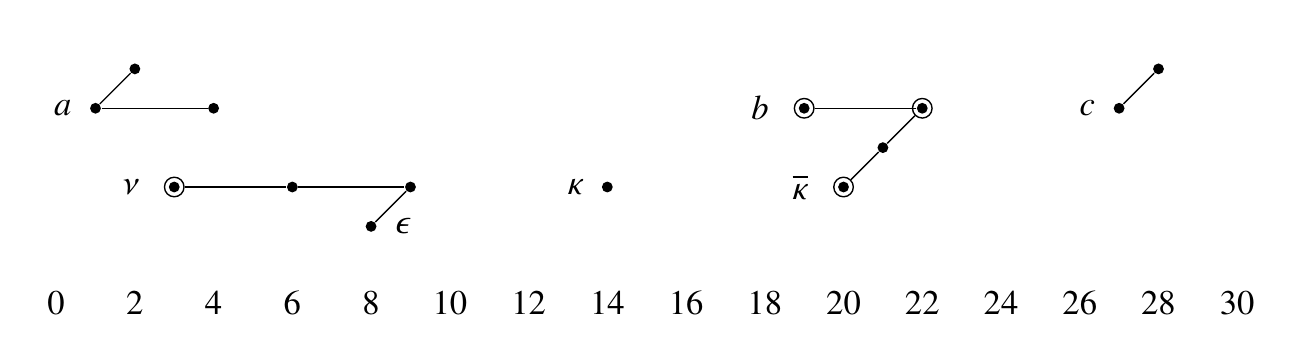}
\caption{The pure $K(2)$ classes in $\pi_\ast E^{hC_4}$}
\end{figure}

\subsection{The homotopy groups of $E^{hG_{24}}$ and $E^{hG_{48}}$}

Remarks \ref{coh-implies} and \ref{Goodfor48} yield an isomorphism of spectral sequences
$$
\xymatrix{
\WW \otimes_{\mathbb{Z}_2} H^\ast(G_{48},E_\ast) \ar@{=>}[r] \dto_\cong & \WW \otimes \pi_\ast E^{hG_{48}}\dto_\cong \\
H^\ast(G_{24},E_\ast) \ar@{=>}[r]  & \pi_\ast E^{hG_{24}}\ .
}
$$
Therefore, we focus on the case of $G_{48}$. Here the standard sources 
are \cite{Tilman}, \cite{tmfbook} and \cite{HM} although
it requires some translation in each case to get the results we want.

The ring $H^0(G_{48},E_\ast)$ is isomorphic to the ring of modular forms for supersingular elliptic curves at the 
prime $2$. Then there are elements
$$
c_4 \in H^0(G_{48},E_8)\qquad c_6 \in H^0(G_{48},E_{12}) \qquad \Delta \in H^0(G_{48},E_{24})
$$
obtained from the modular forms of the same name for our supersingular curve. Since this curve is
smooth, $\Delta$ is invertible and the $j$-invariant of our curves $j = c_4^3/\Delta \in H^0(G_{48}, E_0)$ is defined. Then we get
an isomorphism
$$
\ZZ_2[[j]][c_4,c_6,\Delta^{\pm 1}]/(c_4^3 - c_6^2 = (12)^3\Delta,\Delta j = c_4^3) \cong H^0(G_{48},E_\ast).
$$
Modulo $2$ we get a slightly simpler answer: 
$$
\FF_2[[j]][v_1,\Delta^{\pm 1}]/(j\Delta = v_1^{12}) \cong H^0(G_{48},E_\ast/2).
$$
Modulo $2$ we have congruences
\begin{equation}\label{relationtov1}
c_4 \equiv v_1^4\qquad\mathrm{and}\qquad c_6 \equiv v_1^6.
\end{equation}

To describe the higher cohomology, we make a table of multiplicative generators. For each $x$, the
bidegree of $x$ is $(s,t)$ if $x \in H^s(G_{48},E_t)$.  All but $\mu$
detect the elements of the same name in $\pi_\ast S^0$. Furthermore, all elements but $\kappabar$ are
in the image of the map (see Remark \ref{not-p-typical})
$$
\Ext^{\ast,\ast}_{BP_\ast BP}(BP_\ast,BP_\ast) \to
H^\ast(\GG_2,E_\ast) \to H^\ast (G_{48},E_\ast)\ .
$$
Hence we also give the name (the ``MRW'' is for Miller-Ravenel-Wilson) of a preimage.  The Greek letter
notation is that of \cite{MRW}.

\begin{center}
\begin{tabular}{|c|c|c|c|} 
\hline
Class& Bidegree & Order &MRW\\ 
\hline
$\eta$&$(1,2)$&$2$&$\alpha_1$\\
\hline
$\nu$&$(1,4)$&$4$&$\alpha_{2/2}$\\
\hline
$\mu$ & $(1,6)$ &$2$&$\alpha_3$\\ 
\hline
$\epsilon$ & $(2,10)$ & $2$&$\beta_2$\\ 
\hline
$\kappa$ & $(2,16)$ & $2$&$\beta_3$\\ 
\hline
$\overline{\kappa}$ & $(4,24)$ & $8$&$-$\\ 
\hline
\end{tabular}
\end{center}
The class $\kappabar \in \pi_{20}S^0$ is detected by the image of $\beta_4$ in $H^2(\GG_2,E_\ast)$. 
The class $\mu$ has a special role which we discuss below in Lemma \ref{muhelps}, but we would like to note right away that 
\begin{equation}\label{relationtov1bis}
 v_1^2\eta \equiv \mu\quad \mathrm{modulo}\ 2.
\end{equation}

The following result is actually much easier to visualize than to write down. See the Figure 3  below.

\begin{thm}\label{coh-G48} There is an isomorphism
$$
H^0(G_{48},E_\ast)[\eta,\nu,\mu,\epsilon,\kappa,\kappabar]/R 
\cong H^\ast(G_{48},E_\ast)
$$
where $R$ is the ideal defined by
\begin{enumerate}

\item the order of the elements of positive cohomological degree:
$$
2\eta = 4\nu = 2\mu = 2\epsilon = 2\kappa = 8\overline{\kappa} = 0;
$$

\item the relations for $\nu$: 
$$
\eta\nu = 2\nu^2=\nu^4=\mu\nu = 0;
$$

\item the  relations for $\epsilon$:
$$
\eta\epsilon = \nu^3,\quad \nu\epsilon=\epsilon^2=\mu\epsilon=0;
$$

\item the relations for $\kappa$:
$$
\nu^2\kappa = 4\overline{\kappa},\quad \eta^2\kappa=\epsilon\kappa = \kappa^2=\mu\kappa=0;
$$

\item the elements annihilated by modular forms
$$
c_4\nu = c_6\nu = c_4\epsilon = c_6\epsilon = c_4\kappa = c_6\kappa=0;
$$

\item the relations between $\overline{\kappa}$ and modular forms;
$$
c_4\overline{\kappa}=\Delta\eta^4,\quad c_6\overline{\kappa}=\Delta\eta^3\mu; 
$$

\item and the relations indicated by the congruences of (\ref{relationtov1}) and (\ref{relationtov1bis}):
$$
\mu^2 = c_4\eta^2 \qquad c_4\mu = c_6\eta \qquad c_6\mu =c_4^2 \eta \ .
$$

\end{enumerate}
\end{thm}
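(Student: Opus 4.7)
By Lemma~\ref{coh-decomp-galois}, $H^*(G_{48}, E_*)$ is the $\Gal$-invariants of $H^*(G_{24}, E_*)$ after tensoring back with $\ZZ_2$, so it suffices to carry out a $\Gal$-equivariant computation over $G_{24}$. My strategy is to identify the subring $H^0$ using Strickland's formulas and the fact that $G$ is the formal group of a Weierstrass curve, introduce the named generators in positive cohomological degree, verify the stated relations, and confirm the presentation is complete by a Poincar\'e-series comparison with computations in the literature \cite{Tilman}, \cite{tmfbook}, \cite{HM}.

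\textbf{The ring $H^0$.} Since the universal deformation $C$ of (\ref{ss-curve-def}) is a Weierstrass elliptic curve over $E_0$ and $\Aut(C_0/\FF_4) \rtimes \Gal$ acts on $C$ compatibly with the action on $E_0$, the formulas of \cite{Strick} identify $H^0(G_{48}, E_*)$ with the ring of integral modular forms for supersingular elliptic curves at $p = 2$. The classical Weierstrass relation $c_4^3 - c_6^2 = 1728\Delta$ gives the presentation stated in the theorem, using that $\Delta$ is a unit since $C$ is smooth, and the congruences in (\ref{relationtov1}) arise from the identification of $v_1$ with the Hasse invariant modulo $2$.

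\textbf{Higher generators and the easier relations.} The classes $\eta, \nu, \mu, \epsilon, \kappa$ are defined as the images, under the map from Remark~\ref{not-p-typical}, of the MRW classes $\alpha_1, \alpha_{2/2}, \alpha_3, \beta_2, \beta_3$; their orders and all relations in items (1)--(4) except $\nu^2\kappa = 4\kappabar$ then come directly from the classical MRW relations in $\Ext^{*,*}_{BP_*BP}(BP_*, BP_*)$, notably Toda's $\eta\epsilon = \nu^3$. The class $\kappabar$ does not lift to $\Ext_{BP_*BP}$ and must be introduced separately as a generator of the relevant cyclic summand of $H^4(G_{48}, E_{24})$. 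Relation (7) is the integral lift of the mod $2$ congruences $c_4 \equiv v_1^4$, $c_6 \equiv v_1^6$, and $\mu \equiv v_1^2\eta$ from (\ref{relationtov1}) and (\ref{relationtov1bis}), using that the two sides of each relation sit in the same bidegree and have the same mod $2$ reduction. The annihilation relations (5) reflect that $c_4$ and $c_6$ restrict to $v_1$-powers, while the positive-degree classes $\nu, \epsilon, \kappa$ vanish upon restriction to the $bo$-patterns on any proper supergroup of $C_2$ containing $\FF_4^\times$, both facts readable from Proposition~\ref{homotopyc6}.

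\textbf{Relation (6) and completeness.} The relations $c_4\kappabar = \Delta\eta^4$ and $c_6\kappabar = \Delta\eta^3\mu$ are the key nonformal input; they can be verified by identifying $\kappabar$ with an appropriate Massey product and computing $c_4\kappabar$ and $c_6\kappabar$ at the cochain level in the cobar complex, or equivalently read off from the Bauer descent spectral sequence for $Tmf$. The main obstacle is then showing the presentation admits no further relations. I would compute the Poincar\'e series of $H^*(G_{48}, E_*/2)$ via the Hochschild-Serre spectral sequence for $Q_8 \triangleleft G_{48}$, using the known structure of $H^*(Q_8, \FF_4)$ as a module over $\FF_4[[u_1]]$ and the fact that $G_{48}/Q_8$ has order $6$, prime to $2$, so the spectral sequence collapses to $\Gal \times C_3$-invariants. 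Matching this against the Poincar\'e series extracted from the presentation establishes completeness mod $2$, and the Bockstein spectral sequence, whose differentials are controlled by the same MRW relations, lifts the count integrally. The delicate point is bookkeeping the twisted action of $G_{48}$ on $E_*$ rather than constant coefficients; this is precisely where the computations of \cite{Tilman} and \cite{HM} enter.
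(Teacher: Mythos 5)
The paper does not actually prove this theorem: it is stated as a translation of the known computations of Bauer \cite{Tilman}, the $Tmf$ book \cite{tmfbook}, and Hopkins--Mahowald \cite{HM}, with the passage between $G_{48}$ and $G_{24}$ handled by Remark \ref{coh-implies}. Your proposal is more ambitious, but as an independent argument it has concrete gaps, and at the decisive points (relation (6), the completeness count) it ultimately falls back on ``read off from Bauer'' and ``this is where \cite{Tilman} and \cite{HM} enter,'' i.e., on the same citations the paper makes.

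The specific problems are these. First, $G_{48}/Q_8 \cong \FF_4^\times \rtimes \Gal \cong S_3$ has order $6$, which is \emph{not} prime to $2$, and it is not the product $\Gal \times C_3$; so the claimed collapse of the Lyndon--Hochschild--Serre spectral sequence cannot be deduced from coprimality. What actually makes the reduction work is the odd order of $C_3$ combined with the semilinear Galois descent of Lemma \ref{coh-decomp-galois} (exactness of $(-)^{\Gal}$ on twisted $\WW[\Gal]$-modules), which kills the higher $\Gal$-cohomology despite $|\Gal|=2$. Second, pushing forward MRW classes along the map of Remark \ref{not-p-typical} only yields the relations and order bounds that already hold in $\Ext_{BP_\ast BP}$: you get $4\nu=0$ and $\eta\epsilon=\nu^3$, but not that $\nu$ has order exactly $4$, that $\nu^3\neq 0$, that $\kappabar$ has order $8$, or that $\nu^2\kappa=4\kappabar$ --- all the non-vanishing statements and additive extensions live in the target and are precisely the content of the computation; a mod $2$ Poincar\'e-series count plus an unspecified Bockstein argument does not determine these $\ZZ/4$ and $\ZZ/8$ extensions or the multiplicative relations in (6). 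Third, your justification of (5) via Proposition \ref{homotopyc6} fails: the vanishing of $\nu,\epsilon,\kappa$ after $v_1$-localization only shows they are annihilated by some power of $c_4$ (equivalently of $v_1$), not by $c_4$ and $c_6$ themselves; those exact relations again require the cochain-level (cobar) computation. So the proposal, as written, does not constitute a proof independent of the cited literature, and where it tries to be independent the stated reasons are either incorrect (the coprimality claim) or insufficient (orders, relations (5) and (6)).
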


This result is presented graphically in Figure 3 below. We present it as the $E_2$ page of the Adams-Novikov 
Spectral Sequence. The cohomology is $24$-periodic on $\Delta,$ and the spectral sequence fills the entire upper-half plane.
In Figure 3, the square box $\Box$ represents
a copy of $\ZZ_2[[j]]$, the circle $\circ$ a copy of $\FF_2[[j]]$, and the crossed circle
$\otimes$ a copy of $\ZZ_2[[j]]/(8,2j)$ generated by a class of the form $\Delta^i\kappabar^j$.
The solid bullet represents a class of order $2$ annihilated by $j$
and the doubled bullet a class of order $4$ annihilated by $j$; these last classes
are always of the form $\Delta^i\kappabar^j\nu$.  The solid lines are 
multiplication by $\eta$ or $\nu$, as needed, and a dashed line indicates that $x\eta  = j y$, where $x$ and $y$
are generators in the appropriate bidegree.

\begin{figure}[h]
\centering
\includegraphics[width=\textwidth]{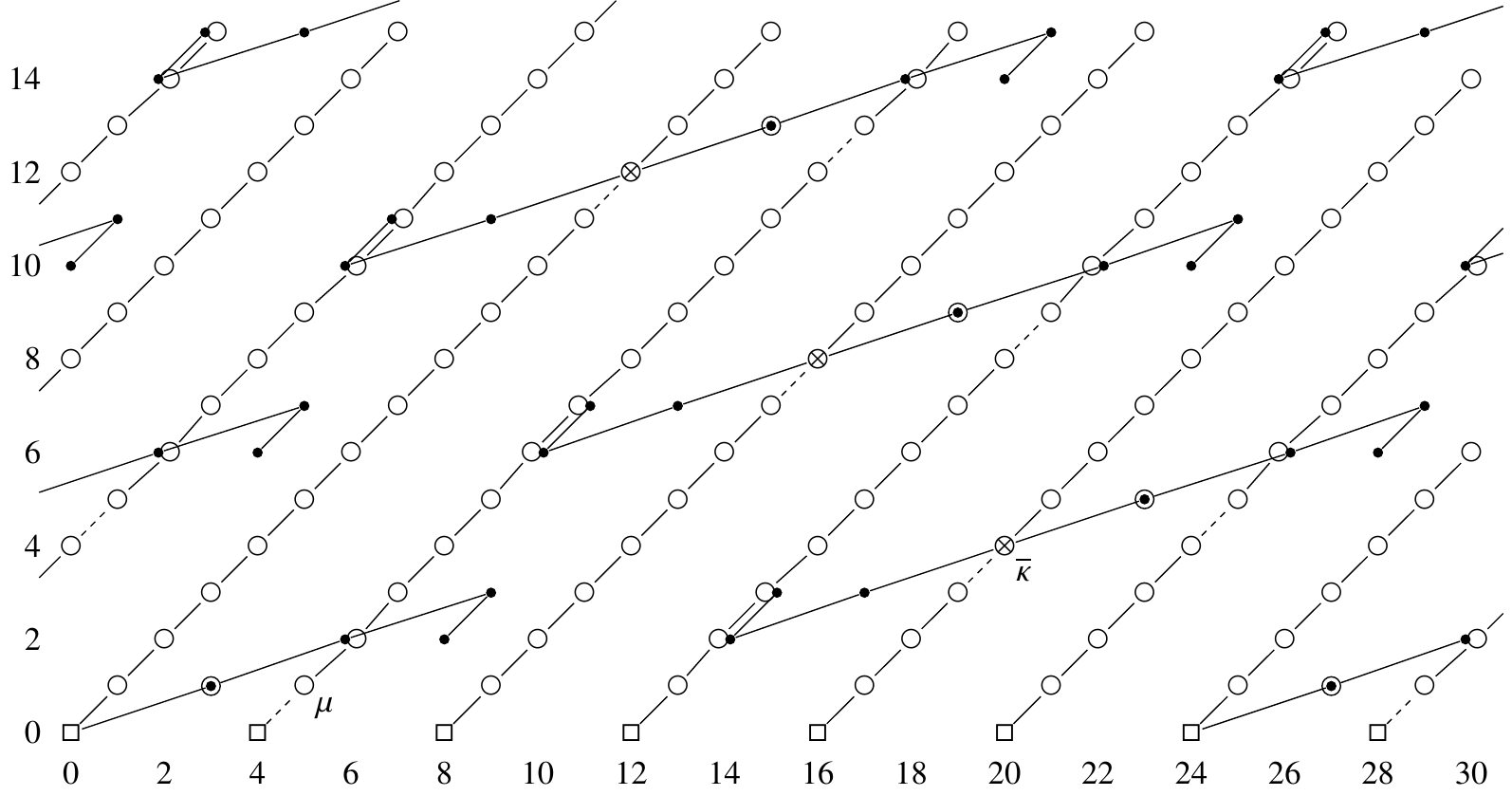}
\caption{The cohomology of $G_{48}$}
\end{figure}

\begin{rem}\label{whatsupwithj} (1) Many of the later relations can be rephrased as relations for multiplication
by $j=c_4^3/\Delta$. For example Theorem \ref{coh-G48} (4) implies
$$
j\nu=j\epsilon=j\kappa=0
$$
and (5) implies
$$
 j\overline{\kappa} = c_4^2\eta^4 
$$
and (6) implies
$$
j\mu = c_4^2c_6\Delta^{-1}\eta.
$$
These last two equations explain the dashed lines in Figure 4.

(2) Multiplication by $\kappabar:H^s(G_{48},E_t) \to H^{s+4}(G_{48},E_{t+24})$ is surjective and an
isomorphism if $s > 0$. In fact,  up to a unit,  $\Delta^{-1}\kappabar \in H^4(G_{48},E_0)$ is the image
of the periodicity class in group cohomology for $Q_8$ under the inclusion of trivial coefficients:
$$
\ZZ/8\cong H^4(Q_8,\WW)^{G_{48}/Q_8} \cong H^4(G_{48},\WW) \to H^4(G_{48},E_0).
$$
\end{rem}

The congruence (\ref{relationtov1}) and the relations of Theorem \ref{coh-G48} now give the 
following result. Note that the class $c_4$ becomes invertible in
$\lim\ v_1^{-1}H^\ast(G_{48},E_\ast/2^n)$ and we may define $b_2 = c_6/c_4$. ({\bf Warning:} This class $b_2$
is related to, but not quite the same, as the class $b_2$ of Proposition \ref{coh-c4}. Both uses of $b_2$
appear in the literature.) 

\begin{prop}\label{coh-G48-v1} The class $b_2$ reduces to $v_1^2$ in
$v_1^{-1}H^\ast(G_{48},E_\ast/2)$. There are isomorphisms
$$
\ZZ_2((j))[b_2^{\pm 1}, \eta]/(2\eta) \cong \lim\ v_1^{-1}H^\ast(G_{48},E_\ast/2^n).
$$
and
$$
\FF_2((j))[v_1^{\pm 1},\eta] \cong v_1^{-1}H^\ast(G_{48},E_\ast/2).
$$

Under the reduction map $H^\ast (G_{48},E_\ast) \to H^\ast(G_{48},E_\ast/2)$ we have
\begin{align*}
c_4 &\mapsto v_1^4\\
c_6 &\mapsto v_1^6\\
c_4 \kappabar &\mapsto v_1^4\kappabar= \Delta\eta^4 \\
\mu &\mapsto v_1^2\eta\ .
\end{align*}
Under the localization map $H^\ast (G_{48},E_\ast) \to v_1^{-1}H^\ast(G_{48},E_\ast/2)$ we have
\begin{align*}
\Delta &\mapsto v_1^{12}/j\\
\kappabar & \mapsto v_1^{8} \eta^4/j \\
\end{align*}
and that $\nu$, $\epsilon$, and $\kappa$ map to zero.
\end{prop}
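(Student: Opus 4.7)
The plan is to derive everything as a direct consequence of the explicit presentation of $H^\ast(G_{48},E_\ast)$ given in Theorem \ref{coh-G48}, combined with the mod $2$ congruences (\ref{relationtov1}) and (\ref{relationtov1bis}). The overall strategy is to reduce mod $2$ (and then mod $2^n$), invert $v_1$, and identify what survives and what vanishes; the Bockstein / inverse-limit formulation follows because everything is finitely generated in each bidegree.

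First I would establish the formulas for the reduction map $H^\ast(G_{48},E_\ast) \to H^\ast(G_{48},E_\ast/2)$. The congruences $c_4 \equiv v_1^4$, $c_6 \equiv v_1^6$, $\mu \equiv v_1^2\eta$ are given or immediate, and $c_4\kappabar = \Delta\eta^4$ together with the $H^0$-relation $j\Delta = v_1^{12}$ then gives $c_4\kappabar \equiv v_1^4\kappabar \equiv \Delta\eta^4$ as asserted. The quotient $b_2 = c_6/c_4$ reduces to $v_1^6/v_1^4 = v_1^2$ as soon as $v_1$ is inverted.

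Next, I would analyze what happens when $v_1$ (equivalently $c_4$, mod $2$) is inverted in $H^\ast(G_{48},E_\ast/2)$. Part (5) of Theorem \ref{coh-G48} says that $\nu,\epsilon,\kappa$ are each annihilated by $c_4$, so they die in the localization. Part (6) identifies $\kappabar$ with $v_1^{-4}\Delta\eta^4 = v_1^{8}\eta^4/j$, and part (7) identifies $\mu$ with $b_2\eta$ via $c_4\mu = c_6\eta$. The modular-form subring $\FF_2[[j]][v_1,\Delta^{\pm 1}]/(j\Delta = v_1^{12})$ becomes $\FF_2((j))[v_1^{\pm 1}]$ after inverting $v_1$, since $\Delta = v_1^{12}/j$ forces $j$ to become a Laurent series variable. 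Adjoining $\eta$ with $2\eta = 0$ and no further surviving relations yields $\FF_2((j))[v_1^{\pm 1},\eta]$; this is exactly the second displayed isomorphism. The localization images of $\Delta$, $\kappabar$, $\nu$, $\epsilon$, $\kappa$ follow automatically.

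For the integral statement, the class $b_2 = c_6/c_4$ makes sense in $c_4^{-1}H^\ast(G_{48},E_\ast)$ and lifts $v_1^2$. Using the Weierstrass relation $c_4^3 - c_6^2 = 1728\Delta$ together with $j\Delta = c_4^3$ we can eliminate $c_6$ and $\Delta$ in favor of $c_4$ and $j$, and after inverting $c_4$ we may in fact change variables from $c_4$ to $b_2$ (the relation $b_2^2 \equiv c_4 \bmod 2$ combined with Hensel's lemma in the $2$-adically complete ring gives $c_4$ as a unit times a power series in $b_2$ and $j$). All the torsion classes $\nu,\epsilon,\kappa$ die as before, while $\mu$ and $\kappabar$ are absorbed into $b_2\eta$ and $b_2^{\pm}\cdot\eta^4/j$ respectively, so the only remaining data is $b_2^{\pm 1}$ and $\eta$ with $2\eta = 0$. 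This is the target ring $\ZZ_2((j))[b_2^{\pm 1},\eta]/(2\eta)$. Finally, for the inverse limit formulation: in each fixed bidegree, $v_1^{-1}H^\ast(G_{48},E_\ast/2^n)$ is finitely generated over $\ZZ_2((j))$ (indeed free on $\{1,\eta\}$ up to the factor $b_2^k$), and the tower in $n$ is Mittag-Leffler, so $\lim^1$ vanishes and the limit matches the Bockstein-lifted ring above.

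The main obstacle I anticipate is not conceptual but bookkeeping: one must verify that no unexpected relations appear after inverting $c_4$, especially involving $\kappabar$ and $\mu$, and that the change of variables eliminating $c_4,c_6,\Delta$ in favor of $b_2$ and $j$ is valid $2$-adically. These are handled respectively by the completeness of the list of relations in Theorem \ref{coh-G48} and by Hensel's lemma applied to the relation $b_2^2 = c_4 - c_4^{-2}\cdot 1728\Delta = c_4(1 - 1728/j)$ in the $2$-adically complete localization.
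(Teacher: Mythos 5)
Your proposal is correct and follows essentially the same route as the paper, which derives Proposition \ref{coh-G48-v1} directly from the congruences (\ref{relationtov1}), (\ref{relationtov1bis}) and the relations of Theorem \ref{coh-G48}: reduce mod $2$ (resp.\ mod $2^n$), invert $v_1$ so that the $c_4$-torsion classes $\nu,\epsilon,\kappa$ die and $\mu$, $\kappabar$ are absorbed as $b_2\eta$ and $v_1^8\eta^4/j$. Your elimination of $c_4$, $c_6$, $\Delta$ via $b_2^2 = c_4(1-1728/j)$ and the Mittag--Leffler remark for the inverse limit simply make explicit the bookkeeping the paper leaves implicit.
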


We have the following; see \cite{HM}, \cite{Tilman}, or \cite{tmfbook}.

\begin{prop}\label{homotopyG48} The homotopy ring $\pi_\ast E^{hG_{48}}$ is periodic of period 192 with
periodicity generator detected by $\Delta^8$. The $bo$-patterns $L_1(E^{hG_{48}})$ are concentrated in
degrees congruent to  $0$, $1$, $2$, and $4$ modulo 8.
\end{prop}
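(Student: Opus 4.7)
The proof naturally splits into two parts corresponding to the two statements.

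For the $bo$-pattern statement, I would run the $v_1$-localized spectral sequence of Proposition \ref{coh-G48-v1}. The classical differential $d_3(v_1^2) = \eta^3$ in the Adams--Novikov spectral sequence for the sphere, pulled back via the unit map $S^0 \to E^{hG_{48}}$, together with the congruence $b_2 \equiv v_1^2 \pmod{2}$, forces $d_3(b_2) = \epsilon \eta^3$ for a unit $\epsilon$; this determines all higher differentials multiplicatively. Following the template of Remark \ref{whatwerewethinking} one obtains
\[
\pi_* L_{K(1)} E^{hG_{48}} \cong \WW((j))^{G_{48}} \otimes_{\ZZ_2} KO_*,
\]
which is concentrated in degrees $0, 1, 2, 4 \pmod{8}$. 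Since $L_1(\pi_* E^{hG_{48}})$ is by definition the image of $\pi_* E^{hG_{48}}$ in this group, it is concentrated in those same degrees.

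For the periodicity, I would analyze the Adams--Novikov differentials on powers of $\Delta$ in the spectral sequence of Theorem \ref{coh-G48}. The permanent cycles $\eta, \nu, \epsilon, \kappa, \overline{\kappa}$ (which survive because they detect nontrivial classes in $\pi_* S^0$ via the unit map and Remark \ref{not-p-typical}), combined with the Leibniz rule and the relations $c_4 \overline{\kappa} = \Delta \eta^4$ and $c_6 \overline{\kappa} = \Delta \eta^3 \mu$, propagate the differential on $v_1^2$ into a cascade: a $d_3$ that kills $\Delta$, a $d_5$ that kills $\Delta^2$ (after lifting through the $d_3$-cycles), and a $d_7$ that kills $\Delta^4$. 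The class $\Delta^8$ then survives to $E_\infty$; here the horizontal vanishing line of Remark \ref{vanishing-lines} ensures that only finitely many differentials need to be ruled out on $\Delta^8$, and the reduction formula $\Delta \mapsto v_1^{12}/j$ from Proposition \ref{coh-G48-v1} implies $\Delta^8$ hits the $v_1$-periodic part nontrivially, so the detected homotopy class is a unit in $\pi_{192} E^{hG_{48}}$. This gives $192$-periodicity.

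\textbf{Main obstacle.} The delicate point is pinning down the specific $d_3, d_5, d_7$ on $\Delta, \Delta^2, \Delta^4$. These differentials are forced by the module structure over the sphere and by the modular-form relations, but making the calculation rigorous requires careful Bockstein-type lifting from the $\FF_2$-reduction (where $\mu \equiv v_1^2 \eta$) back to integral cohomology and repeated use of Leibniz against the congruences in Proposition \ref{coh-G48-v1}. This is precisely the classical $\pi_* tmf_{(2)}$ computation of \cite{Tilman}, \cite{HM}, and \cite{tmfbook}, and since $E^{hG_{48}} \simeq L_{K(2)} tmf$ as recalled in the introduction, in practice I would import the three differentials from those references rather than rederive them.
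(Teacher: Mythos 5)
The paper gives no independent proof of this proposition---it simply cites \cite{HM}, \cite{Tilman}, and \cite{tmfbook}---and your proposal ultimately does the same thing, importing the hard differentials from those references while sketching the $bo$-pattern statement along the lines already laid out in Remark \ref{whatwerewethinking} and Proposition \ref{coh-G48-v1}; so this is essentially the paper's approach and is fine. One caveat: your advertised cascade (``a $d_3$ that kills $\Delta$, a $d_5$ that kills $\Delta^2$, a $d_7$ that kills $\Delta^4$'') misstates the actual pattern in the references---$\Delta$ is a $d_3$-cycle and first supports $d_5(\Delta) \doteq \kappabar\nu$, and the full computation involves further differentials (up through a $d_{23}$ in Bauer's chart) before $\Delta^8$ survives, while invertibility of the detecting class should be argued via the $E_*$-isomorphism criterion of Remark \ref{vanishing-lines-1} rather than nonvanishing in the $v_1$-localization---so the deferral to \cite{Tilman}, \cite{HM}, \cite{tmfbook} is carrying all of that weight, exactly as it does in the paper.
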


\begin{rem}\label{chart48explain} We will not try to enumerate the pure $K(2)$-classes of $M_2(E^{hG_{48}})$; 
this information is known (by the same references as for Proposition \ref{homotopyG48}),
but we won't need that information in its entirety and it is rather complicated
to write down. 
What we will need can be read off of Figure 4, which is adapted from the charts created
by Tilman Bauer \cite{Tilman}, Section 8.

This chart shows a section of the $E_\infty$-page of the Adams-Novikov Spectral Sequence
$$
H^s(G_{48},E_t) \Longrightarrow \pi_{t-s}E^{hG_{48}}.
$$
It is in the standard Adams bigrading $(t-s,s)$.

\begin{figure}[h]
\begin{center}
 \includegraphics[page=1,width=\textwidth]{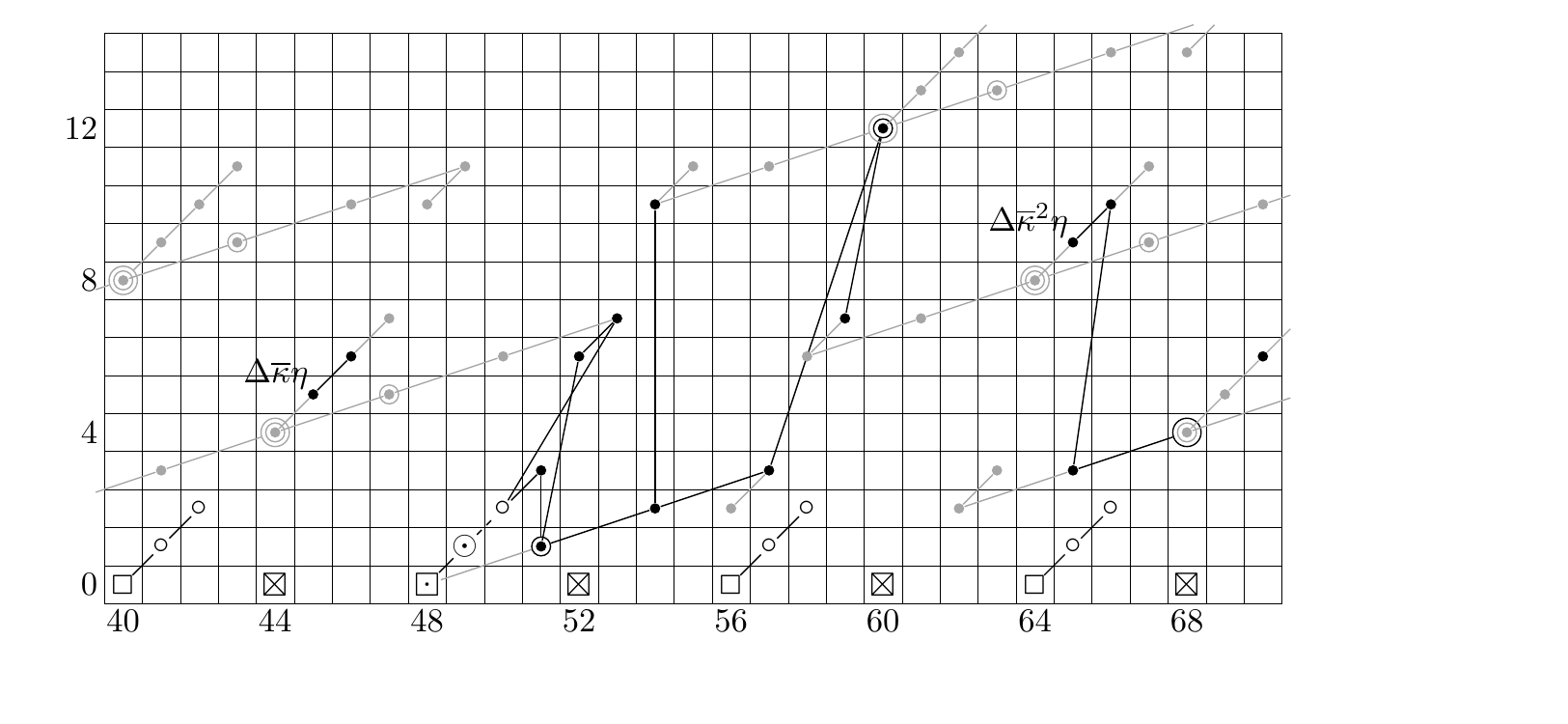}
 \end{center}
 \vspace{-1cm}
\caption{The  homotopy groups $\pi_iE^{hG_{48}}$ for $40 \leq i \leq 70$.}
 \end{figure}

Some additive and multiplicative extensions are displayed as well. 
The non-zero permanent cycles are in black; some other elements, mostly 
built from patterns around elements of the form $\Delta^j\kappabar^i$, have been left in gray for orientation,
even though they do not last to the $E_\infty$-page. Bullets with circles are elements of order $4$; bullets with two 
circles are elements of order $8$. Vertical lines are extensions by multiplication by $2$, lines raising homotopy 
degree by $1$ are $\eta$-extensions, lines raising homotopy degree by $3$ are $\nu$-extensions. 

The lines $0 \leq s \leq 2$ display the $bo$-patterns; the adorned boxes and circles all represent
ideals of either $\ZZ_2[[j]]$ or $\FF_2[[j]]$:
\begin{align*}
\Box &\cong \ZZ_2[[j]]\\
\boxtimes &\cong (2) \subseteq \ZZ_2[[j]]\\
\boxdot&\cong (4,j) \subseteq \ZZ_2[[j]]\\
\circ &\cong \FF_2[[j]]\\
\odot &\cong (j) \subseteq \FF_2[[j]].
\end{align*}
Elements not falling into one of these patterns are annihilated by $j$. The $\eta$-extension from $(t-s,s) = (65,3)$
entry is
ambiguous. We mark it as non-zero because we may choose, as Bauer does, the two generators of the group of
pure $K(2)$-classes in $\pi_{65}E^{hG_{48}}$ to be
$$
e[45,5]\kappabar  \qquad \mathrm{and} \qquad  e[51,1]\kappa
$$
where $e[45,5] \in \pi_{45}E^{hG_{48}}$ and $e[51,1] \in \pi_{51}E^{hG_{48}}$ are generators detected
by $\Delta\kappabar\eta$ and $\Delta^2\nu$ respectively. The class $e[51,1]\kappa$ is detected on the
$s=3$ line by $\Delta^2\kappa\nu$ and $e[51,1]\kappa\eta \ne 0$.

\end{rem}

We now record, from Figure 4, some data about our crucial homotopy classes. 

\begin{lem}\label{theuhrclassesG48} There is an isomorphism
$$
\ZZ/2 \cong \pi_{45} E^{hG_{48}}.
$$
The generator is detected by the class
$$
\Delta\kappabar\eta \in H^5(G_{48},E_{50}).
$$
The class $\Delta\kappabar^2\eta^2\in H^{10}(G_{48},E_{76})$ is a non-zero permanent
cycle detecting a  generator of the subgroup $\pi_{66}E^{hG_{48}}$ of the pure $K(2)$-classes
of that degree. 
\end{lem}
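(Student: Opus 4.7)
Both claims are, at their core, a careful reading of the Adams-Novikov spectral sequence
$$
H^s(G_{48},E_t) \Longrightarrow \pi_{t-s}E^{hG_{48}}
$$
at the bidegrees $(t-s,s)=(45,5)$ and $(66,10)$, using the algebraic description of the $E_2$-page in Theorem~\ref{coh-G48} together with the $E_\infty$-chart in Figure 4 (extracted from the $tmf$ calculations of \cite{Tilman}, \cite{tmfbook}, and \cite{HM}). The plan is to isolate candidate generators at these bidegrees and then use the already-tabulated differentials to see which survive.

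I would begin with the class in $\pi_{45}$. Since $45 \equiv 5 \pmod 8$, Proposition~\ref{homotopyG48} forces the $bo$-pattern contribution $L_1(\pi_{45}E^{hG_{48}})$ to vanish, so the whole group consists of pure $K(2)$-classes and therefore comes from $H^s(G_{48},E_{s+45})$ with $s \geq 3$. Using Theorem~\ref{coh-G48}---in particular the $\Delta$-periodicity on $H^0$, the surjectivity of multiplication by $\kappabar$ on $H^{s\geq 1}$ (Remark~\ref{whatsupwithj}), and the relations annihilating $\nu$, $\epsilon$, and $\kappa$ by $c_4$, $c_6$, and $j$---I would enumerate the $E_2$ monomials in total degree $45$ and then read off Figure 4 to see that the only survivor at $E_\infty$ is $\Delta\kappabar\eta \in H^5(G_{48},E_{50})$. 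This class is automatically a permanent cycle, as it is a product of the permanent cycles $\Delta$, $\kappabar$, and $\eta$; it has order $2$ because $2\eta = 0$ in the $E_2$-page. Together these give the isomorphism $\pi_{45}E^{hG_{48}} \cong \ZZ/2$.

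For the second statement, note that $66 \equiv 2 \pmod 8$, so a $bo$-pattern does contribute to $\pi_{66}$, but the lemma only asserts something about the pure $K(2)$ subgroup $M_2(\pi_{66}E^{hG_{48}})$. The monomial $\Delta\kappabar^2\eta^2$ lives in $H^{10}(G_{48},E_{76})$; I would check via Theorem~\ref{coh-G48} that none of the relations reduce it in this bidegree (the modular-form relations $c_4\kappabar=\Delta\eta^4$ and $c_6\kappabar=\Delta\eta^3\mu$ alter the class only in bidegrees reached by multiplication by $c_4$ or $c_6$). Since $\Delta\kappabar\eta$ is a permanent cycle by the first part and $\kappabar\eta$ is a classical permanent cycle, the product $\Delta\kappabar^2\eta^2$ is a permanent cycle as well. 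Figure~4 then identifies this class at $E_\infty$ as a generator of $M_2(\pi_{66}E^{hG_{48}})$.

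The main obstacle is routine but detailed bookkeeping with Figure 4: one must confirm that the $d_r$-differentials tabulated in the $tmf$ literature neither hit nor originate at these two classes, and that no other cohomology classes in the same total degrees survive to contribute further generators. Once the chart is granted, the entire lemma reduces to a finite check at two specific bidegrees.
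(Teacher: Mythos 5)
Your overall strategy is the same as the paper's: the paper offers no independent argument for this lemma, but simply records the two statements from Figure 4 (Remark \ref{chart48explain}), i.e., from the known computation of $\pi_\ast E^{hG_{48}} \simeq \pi_\ast L_{K(2)}Tmf$ in \cite{Tilman}, \cite{HM}, \cite{tmfbook}, and your plan to enumerate the $E_2$-contributions in total degrees $45$ and $66$ via Theorem \ref{coh-G48} and then read survival off the chart amounts to the same thing.

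However, one step of your justification is wrong and should be removed or repaired: you assert that $\Delta\kappabar\eta$ is ``automatically a permanent cycle, as it is a product of the permanent cycles $\Delta$, $\kappabar$, and $\eta$.'' The class $\Delta$ is \emph{not} a permanent cycle in this spectral sequence; in Bauer's computation one has $d_5(\Delta) \doteq \nu\kappabar$, and only $\Delta^8$ survives -- this is exactly why Proposition \ref{homotopyG48} gives periodicity $192$ rather than $24$. If $\Delta$ were a permanent cycle, $\pi_{24}E^{hG_{48}}$ would contain a filtration-zero class detected by $\Delta$, which it does not. The correct local statement is that $d_5(\Delta\kappabar\eta) \doteq \nu\kappabar^2\eta = 0$ because $\eta\nu = 0$ in $H^\ast(G_{48},E_\ast)$ (Theorem \ref{coh-G48}(2)), so the class survives $d_5$; but that it is a nonzero permanent cycle at $E_\infty$ (neither supporting later differentials nor being a boundary), and that nothing else in total degree $45$ survives, must still be read off the chart or taken from the cited $tmf$ computations -- which is what the paper does, and what the rest of your proposal does as well. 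The same caution applies to $\Delta\kappabar^2\eta^2$: your second argument (product of the surviving class in $\pi_{45}$ with the classical $\kappabar\eta \in \pi_{21}S^0$, then identification on the chart) is fine, but it should not be propped up by the claim that $\Delta$ itself survives. With that claim excised, your proposal coincides with the paper's treatment.
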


We close with some remarks on the role of $\mu$ in the $d_3$ differentials.
\begin{lem}\label{muhelps} Let $\mu \in H^1(\GG_2,E_6)$ be the image of the class
$$
\alpha_3 \in \Ext_{BP_\ast BP}^1(\Sigma^6 BP_\ast, BP_\ast).
$$
Then in any of
the Adams-Novikov Spectral Sequences 
$$
H^\ast(F,E_\ast) \Longrightarrow \pi_{t-s}E^{hF}
$$
and for any $x \in H^\ast(F,E_\ast)$ we have 
$$
d_3(x\mu) = d_3(x)\mu + x\eta^4.
$$
In the spectral sequences
\begin{equation}\label{mod2ANSSstandard}
H^\ast(F,E_\ast/2) \Longrightarrow \pi_{t-s}(E^{hF}\wedge S/2)
\end{equation}
we have
$$
d_3(v_1^2x) = d_3(x)v_1^2 + x\eta^3 + y
$$
where $y\eta=0.$ Finally, in the spectral sequence (\ref{mod2ANSSstandard}) we have
$$
d_3(v_1^4x) = v_1^4d_3(x)\ .
$$
\end{lem}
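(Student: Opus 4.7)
The key input is the classical Miller--Ravenel--Wilson differential $d_3(\alpha_3) = \alpha_1^4$ in the $BP$-based Adams--Novikov Spectral Sequence for the sphere. Using the map of spectral sequences from the $BP$-based ANSS to the $K(n)$-local $E$-based ANSS coming from the classifying map $BP_\ast \to E_\ast$ discussed in Remark \ref{not-p-typical}, together with the fact that $\mu$ and $\eta$ are by definition the images of $\alpha_3$ and $\alpha_1$, we deduce
$$
d_3(\mu) = \eta^4
$$
in any of the spectral sequences $H^\ast(F,E_\ast) \Longrightarrow \pi_{t-s}E^{hF}$, since $F \subseteq \GG_2$ gives a further map of spectral sequences.

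The first formula is then immediate from the Leibniz rule: $d_3(x\mu) = d_3(x)\mu + (-1)^{|x|} x\, d_3(\mu) = d_3(x)\mu \pm x\eta^4$, and the sign is irrelevant because $\eta^4$ is $2$-torsion.

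For the second formula, I would use the congruence $\mu \equiv v_1^2 \eta \pmod 2$ from (\ref{relationtov1bis}) to rewrite the first formula in the mod $2$ spectral sequence. Since $\eta$ is a permanent cycle, Leibniz gives $d_3(x v_1^2 \eta) = d_3(x v_1^2)\,\eta$, and the formula from part (1) rewrites as
$$
d_3(x v_1^2)\,\eta \;=\; d_3(x)\,v_1^2\eta + x \eta^4 \;=\; \bigl(d_3(x)\,v_1^2 + x\eta^3\bigr)\eta.
$$
Setting $y := d_3(xv_1^2) - d_3(x)v_1^2 - x\eta^3$ we obtain $y\eta = 0$, as required.

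For the third formula, apply the Leibniz rule to $v_1^4 = (v_1^2)^2$: taking $x=1$ in the second formula (where $d_3(1)=0$) gives $d_3(v_1^2) = \eta^3 + y_0$ with $y_0\eta = 0$, so
$$
d_3(v_1^4) \;=\; 2 v_1^2\, d_3(v_1^2) \;=\; 0 \quad \text{in } E_\ast/2,
$$
and hence $d_3(v_1^4 x) = v_1^4 d_3(x)$. The only conceptual step is transporting the classical differential $d_3(\alpha_3) = \alpha_1^4$ through the comparison of spectral sequences in Remark \ref{not-p-typical}; everything else is the Leibniz rule applied in a bigraded ring where the relevant elements are $2$-torsion.
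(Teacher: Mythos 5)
Your treatment of the first two formulas is essentially the paper's argument: the classical differential $d_3(\alpha_3)=\alpha_1^4$ in the $BP$-based Adams--Novikov Spectral Sequence, transported via the comparison of Remark \ref{not-p-typical} (the paper phrases this as the fixed point spectral sequence being a module over the sphere's Adams--Novikov Spectral Sequence, which amounts to the same pairing), gives the first formula; and the relation $\alpha_3 = v_1^2\alpha_1$ mod $2$ together with $\eta$-linearity of $d_3$ gives the second, with the defect $y$ killed by $\eta$ exactly as you say. One caveat worth making explicit even there: when you invoke ``Leibniz'' in the mod $2$ spectral sequence you are only entitled to the Leibniz rule for the \emph{module} pairing of the spectral sequence for $E^{hF}\wedge S/2$ over the (multiplicative) spectral sequence for $E^{hF}$ or $S^0$ --- which is all your argument for formula (2) actually uses, so that part is fine.

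Your proof of the third formula, however, has a genuine gap. You compute $d_3(v_1^4) = d_3\bigl((v_1^2)^2\bigr) = 2v_1^2\,d_3(v_1^2) = 0$ and then conclude $d_3(v_1^4x) = v_1^4 d_3(x)$, and both steps apply the Leibniz rule to an \emph{internal} product of two classes of the mod $2$ spectral sequence $H^\ast(F,E_\ast/2)\Rightarrow \pi_\ast(E^{hF}\wedge S/2)$. That spectral sequence is not known to be multiplicative: at $p=2$ the Moore spectrum $S/2$ admits no ring structure (its identity map has order $4$), so $E^{hF}\wedge S/2$ is only a module over $E^{hF}$, and the ring structure on the $E_2$-term need not be compatible with the differentials. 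Neither $v_1^2$ nor $v_1^4$ comes from the sphere or from the integral spectral sequence for a general $F$, so the module Leibniz rule does not apply to $v_1^2\cdot v_1^2$ or to $v_1^4\cdot x$; indeed, the error term $y$ in your own formula (2) is precisely a symptom of this failure of naive $v_1^2$-linearity, and squaring it does not make it go away. The paper's proof avoids this entirely: the Adams $v_1^4$-self map $\Sigma^8 S/2 \to S/2$ induces, after smashing with $E^{hF}$, a map of spectra and hence a map of Adams--Novikov Spectral Sequences which on $E_2$-terms is multiplication by $v_1^4$; being induced by a map of spectra, it commutes with every differential, giving $d_3(v_1^4x)=v_1^4d_3(x)$. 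If you want to avoid citing the self-map, you would instead need to exhibit an integral $d_3$-cycle lifting $v_1^4$ (e.g.\ $c_4$ for $F=G_{48}$) and use the module structure over the integral spectral sequence, but some such additional input is required; the bare Leibniz computation is not available.
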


\begin{proof} In the Adams-Novikov spectral sequence
$$
\Ext^s_{BP_\ast BP}(\Sigma^t BP_\ast,BP_\ast) \Longrightarrow \ZZ_{(2)}\otimes \pi_{t-s}S^0.
$$ 
we have $d_3(\alpha_3) = \eta^4$. (In fact, by \cite{MRW}, Corollary 4.23, 
$\eta^4 \ne 0$ at $E_2$ and $E_2^{1,6} \cong \ZZ/2$ generated
by $\alpha_3$. The differential is then forced.) 
Since the fixed point spectral sequence is a
module over this standard Adams-Novikov Spectral Sequence, the first formula follows.
The second formula follows because $v_1^2\eta = \alpha_3$ in
$$
\Ext^s_{BP_\ast BP}(\Sigma^t BP_\ast,BP_\ast/2).
$$
The third formula follows from the fact that $S/2$ has a $v_1^4$-self map. 
\end{proof} 

\section{Algebraic and topological resolutions}\label{ch:Tower}

In this section we review the centralizer resolution constructed by Hans-Werner Henn
\cite{HennRes} \S 3.4 and then begin the construction of the topological duality resolution. The details of the algebraic duality resolution can be found in \cite{beaudry}.

The two resolutions have complementary features. While we will not try to make
this thought completely precise, the duality resolution reflects, in an essential
way, the fact that the group $\SS_2^1$ is a virtual Poincar\'e duality group of dimension $3$. The 
centralizer resolution on the other hand, is much closer to being an Adams-Novikov tower as there
is an underlying relative homological algebra in the spirit of Miller \cite{Miller}. See
Remark \ref{F-res} below.

\subsection{The centralizer resolution}

Henn's centralizer resolutions grew out of his paper \cite{HennDuke} which used the 
centralizers of elementary abelian subgroups of $S_n$ to detect elements in the 
cohomology of $S_n$. At the prime $2$, this approach needs a slight modification,
as the maximal finite $2$-group in $S_2$ is $Q_8$, which is not elementary abelian. 

\begin{rem}\label{2-g24s} In (\ref{ss-curve1}) we defined $G_{24} \subseteq \SS_2^1$
as the image of a group of automorphisms of a supersingular elliptic curve. 
The group
$\SS_2^1$ fits into a short exact sequence
$$
\xymatrix{
1 \longr \SS_2^1 \rto &  \SS_2 \rto^-N & \ZZ_2 \longr 1
}
$$
where $N$ is the reduced determinant map of (\ref{norm-defined}). 
Let $$\pi=1+2\omega$$ be an element of $\mathbb{S}_2,$ where $\omega \in \mathbb{W}^{\times}$ is a cube root 
of unity. Notice that $\pi$ is not an element of $\mathbb{S}_2^1$ because $N(\pi)=3.$
Then  we define $G'_{24} := \pi G_{24}\pi^{-1} \subseteq \SS_2^1.$ This is a subgroup
isomorphic to $G_{24},$ but not conjugate to $G_{24}$ in $\SS_2^1$. 

Note that multiplication by $\pi$ defines an equivalence $E^{hG_{24}} \simeq
E^{hG'_{24}}$. For complete details on this and more,
see \cite{beaudry}. 
\end{rem}

We now have the following result from \S 3.4 of \cite{HennRes}. This is the {\it algebraic
centralizer resolution}.

\begin{thm}\label{cent-res-thm} There is an exact sequence of continuous $\SS_2^1$-modules
\begin{align}\label{cent-res-alg}
0 \to \ZZ_2[[\SS_2^1/C_6]] \to  \ZZ_2[[\SS_2^1/C_2]]
\to &\ZZ_2[[\SS_2^1/C_6]] \oplus \ZZ_2[[\SS_2^1/C_4]]\nonumber\\
&\to \ZZ_2[[\SS_2^1/{G_{24}}]] \oplus \ZZ_2[[\SS_2^1/G'_{24}]]  \mathop{\longr}^{\epsilon} \ZZ_2 \to 0\ .
\end{align}
The map $\epsilon$ is the sum of the augmentation maps.
\end{thm}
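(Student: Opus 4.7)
The plan is to follow the Henn-style strategy for building a permutation-module resolution from right to left, using cohomological detection by finite subgroups together with a Nakayama/pro-free lifting argument to go from $\FF_2$- to $\ZZ_2$-coefficients. The philosophical input is that the mod $2$ cohomology of $\SS_2^1$ is controlled, in a computable way, by restriction to the family of finite subgroups $\{G_{24}, G'_{24}, C_6, C_4, C_2\}$; at $p=2$ this is the appropriate modification of the centralizer detection principle of \cite{HennDuke}, accommodating the fact that the maximal finite $2$-subgroup $Q_8 \subset \SS_2^1$ is not elementary abelian.

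Concretely, I would proceed as follows. Define $\epsilon$ as the sum of augmentations; it is tautologically surjective and we let $M_0$ denote its kernel. Shapiro's lemma gives $H^{\ast}(\SS_2^1, \ZZ_2[[\SS_2^1/F]]) \cong H^{\ast}(F, \ZZ_2)$, so after applying $\FF_2 \otimes_{\ZZ_2}(-)$, the modules in question have known cohomology, and in particular $H^0(\SS_2^1, \FF_2 \otimes M_0)$ can be computed from the obvious long exact sequence. Generators of this $H^0$ correspond to $\SS_2^1$-orbits of fixed vectors under the next subgroup in the resolution; each such generator determines a map $\ZZ_2[[\SS_2^1/F]] \to M_0$. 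Using that $\ZZ_2[[\SS_2^1/F]]$ is pro-free and that a continuous homomorphism of pro-free modules is an isomorphism iff it is an isomorphism mod $\mm$ (Proposition A.13 of \cite{HvStr} in its $\ZZ_2$-adic form), one lifts from $\FF_2$ to $\ZZ_2$ and iterates: having built the map to $M_i$, identify its kernel $M_{i+1}$, compute $H^0(\SS_2^1, \FF_2 \otimes M_{i+1})$, and continue.

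The choice of subgroups and their multiplicities is pinned down by an Euler-characteristic constraint: the alternating sum, in each cohomological degree, of the dimensions of $H^{\ast}(F_i, \FF_2)$ must equal $\dim H^{\ast}(\SS_2^1, \FF_2)$ (taken as virtual $\FF_2[[\SS_2^1]]$-modules), which forces the appearance of one $G_{24}$, one $G'_{24}$, the two copies of $C_6$ and the single $C_4$ and $C_2$. Once the correct list of subgroups has been identified, the complex one writes down has the right shape; termination at $\ZZ_2$ after four steps is consistent with $\SS_2^1$ being a virtual Poincar\'e duality group of dimension $3$.

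The main obstacle, and the genuinely delicate part of the argument, is verifying exactness in the middle. This requires an explicit analysis of the double cosets $F_j \backslash \SS_2^1 / F_i$ appearing in each differential, a careful choice of representatives (using that $G_{24}$ and $G'_{24}$ are non-conjugate in $\SS_2^1$ but conjugate in $\SS_2$ via the element $\pi=1+2\omega$), and the insertion of signs so that successive composites vanish while the homology vanishes in positive degrees. In particular, the map $\ZZ_2[[\SS_2^1/C_2]] \to \ZZ_2[[\SS_2^1/C_6]] \oplus \ZZ_2[[\SS_2^1/C_4]]$ is determined by the inclusions $C_2 \hookrightarrow C_4$ and $C_2 \hookrightarrow C_6$ combined with signs chosen so that the incoming boundary from the left-hand $\ZZ_2[[\SS_2^1/C_6]]$ lands in its kernel; verifying this, and the analogous condition at the next stage, is where the detailed group-theoretic input from the supersingular curve enters (via Strickland's formulas for the $G_{24}$-action). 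Given this, exactness in the middle follows from the detection theorem together with a degree-by-degree count in $H^{\ast}(F, \FF_2)$ for each $F$ in the list.
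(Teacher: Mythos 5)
First, a point of comparison: the paper does not prove Theorem \ref{cent-res-thm} at all --- it is quoted verbatim from \S 3.4 of \cite{HennRes}, and the actual proof lives there (with the related algebraic work at $p=2$ in \cite{beaudry}). So your proposal is not ``the paper's argument''; it is an attempt to reconstruct Henn's construction, and while the overall strategy you describe (iterated $\cF$-projective covers of successive kernels, plus a Nakayama-type lifting from $\FF_2$ to $\ZZ_2$) is indeed the right framework, two of your load-bearing steps do not hold up. The most concrete failure is the ``Euler-characteristic constraint'': by Shapiro's lemma, $H^\ast(\SS_2^1,\ZZ_2[[\SS_2^1/C_6]])\cong H^\ast(C_6,\ZZ_2)\cong H^\ast(C_2,\ZZ_2)$, because the factor $\FF_4^\times\cong C_3$ contributes nothing to $2$-adic or mod $2$ cohomology; the same collapse identifies the Shapiro cohomology of the $C_6$- and $C_2$-induced terms degree by degree. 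Hence no count of dimensions of $H^\ast(F_i,\FF_2)$ can ``force'' the list of subgroups, and in particular cannot distinguish the $C_6$-terms from $C_2$-terms; pinning down the modules requires the actual module structure and the maps, not cohomological bookkeeping. (A smaller slip of the same kind: maps $\ZZ_2[[\SS_2^1/F]]\to M$ correspond to elements of $H^0(F,M)$, not to classes in $H^0(\SS_2^1,\FF_2\otimes M)$; and Strickland's formulas describe the action on $E_\ast$, which is irrelevant to this purely group-theoretic statement about the trivial module $\ZZ_2$.)

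The second, deeper gap is at the place you yourself flag as delicate and then dispatch with ``follows from the detection theorem together with a degree-by-degree count.'' In the iterative construction, exactness at the intermediate spots is automatic (each term is chosen to surject onto the previous kernel); the genuine content is (a) that the chosen induced modules really do cover the successive kernels --- which at $p=2$ requires the modified detection result accommodating $Q_8$, together with substantial knowledge of $H^\ast(S_2^1,\FF_2)$, neither of which is free --- and (b) that the final kernel is isomorphic, as a $\ZZ_2[[\SS_2^1]]$-module, to the induced module $\ZZ_2[[\SS_2^1/C_6]]$ on the nose, rather than to some twisted form or non-permutation lattice with the same Shapiro cohomology. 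Detection plus dimension counts cannot rule out such twists; identifying the top module is exactly the hard step in all resolutions of this type (compare the identification of the analogous top term in \cite{GHMR} and the length of the argument in \cite{beaudry}), and your sketch contains no mechanism for it. As written, the proposal is a plausible outline of Henn's framework from \cite{HennDuke} and \cite{HennRes}, but it does not constitute a proof of the theorem.
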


We will call this a resolution, even though  the terms are not projective as $\ZZ_2[[\SS_2^1]]$-
modules.  It is an $\cF$-projective resolution, an idea we explore below in Remark \ref{F-res}.

\begin{rem}\label{cent-alg-ss}  Suppose we write
\begin{align*}\label{cent-fibers}
P_0 &= \ZZ_2[[\SS_2^1/G_{24}]] \times \ZZ_2[[\SS_2^1/G_{24}']]\\
P_1 &= \ZZ_2[[\SS_2^1/C_6]] \times \ZZ_2[[\SS_2^1/C_4]]\\
P_2 &= \ZZ_2[[\SS_2^1/C_2]]\nonumber\\
P_3 &= \ZZ_2[[\SS_2^1/C_6]]\nonumber.
\end{align*}
Then for any profinite $\SS_2^1$-module $M$, we get
a spectral sequence
$$
E_1^{p,q} \cong \Ext^q_{\ZZ_2[[\SS_2^1]]}(P_p,M) \Longrightarrow H^{p+q}(\SS_2^1,M).
$$
The $E_1$-terms can all be written as group cohomology groups; for example
$$
E_1^{0,q} \cong H^q(G_{24},M) \times H^q(G_{24}',M).
$$
We will call this the {\it algebraic centralizer resolution spectral sequence}. In many
applications, the distinction between the groups $G_{24}$ and $G_{24}'$ disappears. 
For example, if $M$ is
a $\GG_2$-module (such as $E_nX$ for some spectrum $X$) then multiplication
by $\pi$ induces an isomorphism $H^q(G_{24},M) \cong H^q(G_{24}',M)$.
\end{rem} 

\begin{rem}\label{morava-cent} We can induce the resolution (\ref{cent-res-alg}) of $\SS_2^1$-modules
up to a resolution of $\GG_2$-modules and obtain an exact sequence
\begin{align*}
0 \to \ZZ_2[[\GG_2/C_6]] \to  \ZZ_2&[[\GG_2/C_2]]
\to \ZZ_2[[\GG_2/C_6]] \oplus \ZZ_2[[\GG_2/C_4]]\nonumber\\
&\to \ZZ_2[[\GG_2/G_{24}]] \oplus \ZZ_2[[\GG_2/G_{24}]]  \to \ZZ_2[[\GG_2/\SS_2^1]] \to 0\ .
\end{align*}
Since $G'_{24}$ is conjugate to $G_{24}$ in $\GG_2$, we have $\ZZ_2[[\GG_2/G_{24}]] \cong
\ZZ_2[[\GG_2/G'_{24}]]$ as $\GG_2$-modules and we have made that substitution.
If $F$ is any closed subgroup of $\GG_2$, then the equivalence of (\ref{theusual}) gives us
an isomorphism of twisted $\GG_2$-modules
$$
\Hom_{\ZZ_2[[\GG_2]]}(\ZZ_2[[\GG_2/F]],E_\ast) \cong E_\ast E^{hF}.
$$
Combining these observations, we a get an exact sequence of twisted $\GG_2$-modules
\begin{equation}\label{cent-res-morava}
0\to E_\ast E^{h\SS_2^1}\rightarrow \begin{array}{c} E_\ast E^{hG_{24}}\\
\vspace{.05in} 
\times \\ \vspace{.05in} E_\ast E^{hG_{24}} \end{array} \rightarrow
\begin{array}{c} E_\ast E^{hC_6} \\
\vspace{.05in} 
\times \\ \vspace{.05in}E_\ast E^{hC_4} \end{array} \rightarrow
\begin{array}{c}E_\ast E^{hC_2}
\end{array} \rightarrow 
\begin{array}{c}
E_\ast E^{hC_6} \end{array} \to 0\ .
\end{equation}
\end{rem}

We then have the following result; this is the {\it topological centralizer resolution} of Theorem 12 of \cite{HennRes}.

\begin{thm}\label{cent-res} The algebraic resolution of \ref{cent-res-morava} can
be realized by a sequence of spectra
\begin{equation*}
E^{h\SS_2^1} \mathop{\longrightarrow}^{p} \begin{array}{c} E^{hG_{24}}\\
\vspace{.05in} 
\times \\ \vspace{.05in} E^{hG_{24}} \end{array} \rightarrow
\begin{array}{c} E^{hC_6} \\
\vspace{.05in} 
\times \\ \vspace{.05in} E^{hC_4} \end{array} \rightarrow
\begin{array}{c} E^{hC_2}
\end{array} \rightarrow 
\begin{array}{c}
E^{hC_6} \end{array}
\end{equation*}
All compositions and all Toda brackets are zero modulo indeterminacy.
\end{thm}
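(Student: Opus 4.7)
The plan is to construct each of the four maps in the sequence individually, show that successive composites are null-homotopic, and finally verify the vanishing of higher Toda brackets so that the sequence refines to a tower of fibrations whose associated graded recovers the claimed list of fixed-point spectra.

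For the construction of the individual maps I would rely on the identification of mapping spectra from Remark \ref{ghmres}. For any pair of finite subgroups $F_1, F_2$ of $\GG_2$ the equivalence
$$
F(E^{hF_1}, E^{hF_2}) \simeq \holim_i \prod_{F_2 \backslash \GG_2 / U_i} E^{hF_x}
$$
identifies homotopy classes of maps with inverse limits of products of homotopy groups of fixed-point spectra on various finite subgroups $F_x$. The algebraic differentials in the resolution of Theorem \ref{cent-res-thm} are prescribed by $\ZZ_2$-coefficients on each double coset, and via Lemma \ref{hzero} these lift canonically to the copies of $\WW$ (respectively $\ZZ_2$) sitting inside $\pi_0 E^{hF_x}$ as the image of the unit. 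Assembling these classes across all double cosets and passing through the homotopy limit produces a map of spectra that realizes the prescribed algebraic differential on $E_\ast$-homology. The first map $E^{h\SS_2^1} \to E^{hG_{24}} \times E^{hG_{24}}$ can in fact be taken to be the product of the two canonical inclusion-induced maps, using the equivalence $E^{hG_{24}'} \simeq E^{hG_{24}}$ given by conjugation by $\pi = 1 + 2\omega$ as in Remark \ref{2-g24s}.

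For the vanishing of compositions, I would invoke the Adams-Novikov spectral sequence for the relevant mapping spectra. By Lemma \ref{reduce-to-finite} applied to $Y = E^{hF_i}$ together with the equivalence \eqref{theusual}, the Morava module structure gives
$$
H^s(\GG_2, E_t F(E^{hF_i}, E^{hF_{i+2}})) \cong H^s(F_{i+2}, E_t E^{hF_i}) \Longrightarrow \pi_{t-s} F(E^{hF_i}, E^{hF_{i+2}}).
$$
Any composite of two consecutive maps induces zero on $E_\ast$-homology by exactness of \eqref{cent-res-alg}; equivalently its class in Adams-Novikov filtration zero vanishes. A non-null composite would thus be detected at strictly positive filtration in $\pi_0$, and I would rule this out by a degree-counting argument drawing on the computations of Section \ref{ch:coho}. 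In particular, the explicit structure of $H^s(G_{24}, E_t)$ (Theorem \ref{coh-G48} and Remark \ref{coh-implies}), $H^s(C_4, E_t)$ (Proposition \ref{coh-c4}), and $H^s(C_2, E_t)$ constrains the higher-filtration contributions to $\pi_0$ tightly enough that the required bidegrees can be checked to vanish.

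The main obstacle will be the vanishing of the Toda brackets modulo indeterminacy. For each triple of consecutive maps the resulting bracket is a coset in a subquotient of $\pi_1 F(E^{hF_i}, E^{hF_{i+3}})$, which by the same spectral sequence is built from $H^s(F_{i+3}, E_t E^{hF_i})$ in bidegrees with $t-s = 1$. Because the cohomology of our subgroups carries both $bo$-patterns and a substantial collection of pure $K(2)$-classes in low degrees, these potential obstructions are not automatically zero, and the heart of the proof will be a case-by-case inspection of the relevant bidegrees, together with an explicit identification of the indeterminacy coming from lower-filtration classes in $\pi_0$ of the same mapping spectrum. An alternative and perhaps cleaner route is to adopt the obstruction-theoretic perspective indicated in Remark \ref{F-res}: the sequence is an $\cF$-projective resolution, and the obstructions to realizing it as a tower are controlled by Ext groups which can be shown to vanish for the collection of subgroups at hand. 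Either way, this Toda-bracket verification is where the bulk of the technical work lies.
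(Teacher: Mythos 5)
You should first be aware that the paper does not actually prove this statement: Theorem \ref{cent-res} is imported wholesale as the topological centralizer resolution of Theorem 12 of \cite{HennRes}, with the underlying relative homological algebra (the $\cF$-projectivity of the algebraic resolution, Proposition \ref{cent-is-F}) also credited to Henn. So there is no internal proof to compare with; the closest internal model is the paper's proof of the analogous statement for the duality resolution, Lemma \ref{dual-res-lem}. Measured against that, the first two-thirds of your outline is on the right track: realizing the maps and killing the composites by showing that the Hurewicz map $\pi_0F(E^{hF_1},E^{hF_2})\to\Hom_{Mor}(E_0E^{hF_1},E_0E^{hF_2})$ detects everything, via the double coset description \eqref{thegencase-homotopy} and the degree-zero homotopy of the finite fixed-point spectra, is exactly how Lemma \ref{dual-res-lem} proceeds. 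Note, though, that the paper does not do ``degree counting in positive filtration'' as you suggest; it proves the Hurewicz map is an \emph{isomorphism} using $\pi_0E^{hK}=H^0(K,E_0)$ for $K\subseteq C_6$, and for the centralizer resolution you would need the corresponding statements for targets built from $C_2$, $C_4$ and $C_6$, which requires the explicit homotopy computations of Section \ref{ch:coho} rather than an appeal to their existence.

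The genuine gap is the Toda brackets, which is precisely the content that distinguishes a ``resolution'' from a chain complex of spectra and is what the theorem asserts. Your proposal names two possible routes (a bidegree-by-bidegree inspection of $\pi_1$ of the relevant mapping spectra, or the obstruction theory attached to the $\cF$-projective structure of Remark \ref{F-res}) but carries out neither, explicitly deferring ``the bulk of the technical work.'' As written this is a plan, not a proof. For contrast, in the duality case the paper disposes of its single triple bracket by a concrete device: it shows the indeterminacy is the entire group, by proving that restriction along $E^{h\SS_2^1}\to E^{hG_{24}}$ is surjective on $\pi_\ast F(-,E^{hC_6})$, which reduces to producing a $C_6$-equivariant splitting of $\GG_2/G_{24}\to\GG_2/\SS_2^1$. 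For the centralizer resolution there are several brackets (including the four-fold one needed to refine the length-four sequence to a tower), and nothing in your sketch identifies which of them can be killed by indeterminacy, which need a vanishing computation, or why the $\cF$-relative Ext obstructions vanish; this is where Henn's argument does real work, and it is the part your proposal would have to supply before it could be accepted.
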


\begin{rem}\label{twin tower} The vanishing of the Toda brackets in this result has
several implications. To explain these and for future reference we write, echoing the
notation of Remark \ref{cent-alg-ss}:
\begin{align}\label{cent-fibers}
F_0 &= E^{hG_{24}} \times E^{hG_{24}}\nonumber\\
F_1 &= E^{hC_6} \times E^{hC_4}\\
F_2 &= E^{hC_2}\nonumber\\
F_3 &= E^{hC_6}\nonumber\ .
\end{align}
Then the resolution of Theorem  \ref{cent-res} can be refined to a tower of fibrations under
$E^{h\SS_2^1}$:
\begin{equation}\label{cent-tower}
\xymatrix{
E^{h\SS_2^1} \rto & Y_2 \rto & Y_1 \rto& E^{hG_{24}}\times E^{hG_{24}}=F_0 \\
\Sigma^{-3}F_3 \uto
& \Sigma^{-2}F_2 \uto &
\Sigma^{-1}F_1 \uto
}
\end{equation}
Alternatively we could refine the resolution into a tower over $E^{h\SS_2^1}$. Let us write
$$
\xymatrix@C=10pt@R=12.5pt{
X \ar[dr] && \ar@{-->}[ll]Z\\
&Y\ar[ur]
}
$$
for a cofiber sequence (i.e., a triangle) $X \to Y \to Z \to \Sigma X$. Then we have a diagram of cofiber sequences
\begin{equation}\label{cent-cofs}
\xymatrix@C=10pt@R=12.5pt{
E^{h\SS_2^1} \ar[dr]_p && \ar@{-->}[ll] C_1\ar[dr] && \ar@{-->}[ll] C_2\ar[dr] && \ar@{-->}[ll] C_3\ar[dr]^\simeq\\
&F_0\ar[ur]&&F_1\ar[ur]&&F_2\ar[ur]&&F_3
}
\end{equation}
where each of the compositions $F_{i-1} \to C_i \to F_i$
is the map $F_{i-1} \to F_i$ in the resolution. 

The towers (\ref{cent-tower}) and (\ref{cent-cofs}) determine each other. This is because
there is a diagram with rows and columns cofibration sequences
\begin{equation}\label{the-same}
\xymatrix{
\Sigma^{-(s+1)} C_{s+1} \rto \dto & E^{h\SS_2^1} \rto \dto & Y_s\dto \\
\Sigma^{-s} C_{s} \rto \dto & E^{h\SS_2^1} \rto \dto & Y_{s-1}\dto \\
\Sigma^{-s}F_s \rto & \ast \rto &\Sigma^{-s+1}F_s
}
\end{equation}
Using the tower over $E^{h\SS_2^1}$ of (\ref{cent-cofs}) we get a
number of spectral sequences; for example, if $Y$ is any spectrum, we get a
spectral sequence for the function spectrum $F(Y,E^{h\SS_2^1})$
\begin{equation}\label{cent-tower-ss}
E_1^{s,t} = \pi_tF(Y,F_s) \Longrightarrow \pi_{t-s}F(Y,E^{h\SS_2^1}).
\end{equation}
Up to isomorphism, this spectral sequence can be obtained from the tower of (\ref{cent-tower});
this follows from (\ref{the-same}). 
\end{rem}

\begin{rem}\label{homology-cent-res} It is direct to calculate $E_\ast C_s$ and $E_\ast Y_s$
for the layers of the two towers. If we define $K_s \subseteq E_\ast F_s$ to be the image
of $E_\ast F_{s-1} \to E_\ast F_s$, then $E_\ast C_s \cong K_s$ and, more, if we apply
$E_\ast$ to (\ref{cent-cofs}) we get a collection of short exact sequences:
$$
\xymatrix@C=10pt@R=15pt{
E_\ast E^{h\SS_2^1} \ar@{>->}[dr] && \ar@{-->}[ll]_0 E_\ast C_1\ar@{>->}[dr] && \ar@{-->}[ll]_0 E_\ast C_2\ar@{>->}[dr] && \ar@{-->}[ll]_0 E_\ast C_3\ar[dr]^\cong\\
&E_\ast F_0\ar@{->>}[ur]&&E_\ast F_1\ar@{->>}[ur]&&E_\ast F_2\ar@{->>}[ur]&&E_\ast F_3\ .
}
$$
Notice that this implies that each of the dotted arrows of (\ref{cent-cofs}) has Adams-Novikov filtration
one.  Finally, the cofibration sequence $E^{h\SS_2^1} \to Y_s \to \Sigma^{-s}C_{s+1}$ induces a
short exact sequence
$$
0 \to E_\ast E^{h\SS_2^1} \to E_\ast Y_s \to \Sigma^{-s}K_{s+1} \to 0\ .
$$
\end{rem}

\subsection{The duality resolution, first steps}

We have the {\it algebraic duality resolution } from  \cite{beaudry}. The groups $G_{24}$ and $G_{24}'$
are defined in Remark \ref{2-g24s}.

\begin{thm}\label{dual-res-alg}There is an exact sequence of continuous $\SS_2^1$-modules
\begin{align}\label{duality-res}
0 \to \ZZ_2[[\SS_2^1/G_{24}']] \to  \ZZ_2[[\SS_2^1/C_6]]
\to \ZZ_2&[[\SS_2^1/C_6]] \to
\ZZ_2[[\SS_2^1/G_{24}]]  \mathop{\to}^{\epsilon} \ZZ_2 \to 0
\end{align}
where $\epsilon$ is the augmentation. 
\end{thm}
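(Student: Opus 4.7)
The plan is to construct the resolution from the augmentation $\epsilon$ on the right, working inwards, and then close off the left end using Poincar\'e duality. The overall shape of the resolution reflects the fact (emphasized in Remark \ref{all-the-splittings}) that $\SS_2^1$ is a virtual Poincar\'e duality group of dimension $3$: the symmetric pattern $(G_{24}', C_6, C_6, G_{24})$ is exactly what one would expect, and the key is to realize this symmetry on the nose.

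First I would set up the algebraic framework. Let $M_0 = \ZZ_2[[\SS_2^1/G_{24}]]$ and let $N_0 = \ker(\epsilon \colon M_0 \to \ZZ_2)$. Because every finite subgroup of $\SS_2^1$ whose order is divisible by $2$ is sub-conjugate to $G_{24}$ or to $G_{24}'$, the module $M_0$ is $\cF$-projective for a suitable family $\cF$ of finite subgroups, and a general principle (see Henn's framework referenced in Remark \ref{F-res}) reduces the existence of such a resolution to dimension counts in Tor groups $\mathrm{Tor}^{\ZZ_2[[\SS_2^1]]}_\ast(\FF_2, -)$ of the intermediate modules. The ranks of these Tor groups can be read off from the mod $2$ cohomology of $\SS_2^1$, which is known via Lazard's theorem and the decomposition $\SS_2^1 \cong K_1 \rtimes G_{24}$ induced by the splitting of Remark \ref{all-the-splittings} (where $K_1 = K \cap \SS_2^1$ is a torsion-free Poincar\'e duality group of dimension $3$).

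Next I would construct the first differential $\ZZ_2[[\SS_2^1/C_6]] \to N_0$. The choice of $C_6$ is forced: Shapiro's lemma identifies $\mathrm{Tor}$ groups of $M_0$ with $H_\ast(G_{24}, \FF_2)$, and the rank of $H_1(G_{24}, \FF_2)$ together with the requirement that the cokernel of our map be zero dictates using a single copy of $\ZZ_2[[\SS_2^1/C_6]]$ (and not, for instance, $\ZZ_2[[\SS_2^1/C_4]]$, which would overshoot). The map itself is built from a specific element of the form $x - 1 \in \ZZ_2[[\SS_2^1]]$ modulo the appropriate stabilizer, where $x \in \SS_2^1$ is chosen using the semi-direct product decomposition. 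For the middle differential $d\colon \ZZ_2[[\SS_2^1/C_6]] \to \ZZ_2[[\SS_2^1/C_6]]$, I would try multiplication by an element such as $\pi' - 1$, where $\pi'$ is a suitably chosen generator of the pro-$2$ part of $\SS_2^1$ normalizing the two $C_6$'s, and verify that the kernel/cokernel patterns match what is forced by Poincar\'e duality.

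Finally, the top term: having constructed the three-step complex, I would identify its leftmost homology. Here Poincar\'e duality in dimension $3$ produces an isomorphism between this kernel and the module dual to the cokernel at the right end, and a direct inspection of which finite subgroup governs the dualizing module identifies it as $\ZZ_2[[\SS_2^1/G_{24}']]$; the appearance of $G_{24}'$ (rather than $G_{24}$) is precisely because the $\SS_2$-conjugation by $\pi = 1 + 2\omega$ of Remark \ref{2-g24s} exchanges the two classes and Poincar\'e duality picks up exactly this twist.

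The hard part will be verifying exactness at the middle two terms and pinning down the middle differential explicitly. The existence of some resolution with these terms follows from dimension counting, but showing that a specific, symmetric choice of differentials works requires a careful analysis of the action of $\SS_2^1$ on cosets $\SS_2^1/C_6$, in particular identifying which $C_6$-double cosets support the relevant classes in group cohomology. This is the step that genuinely depends on the small-prime $2$-adic arithmetic of $\SS_2^1$ and where the calculations of \cite{beaudry} do the essential work.
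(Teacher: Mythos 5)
First, a point of comparison: the paper does not prove this theorem at all --- it is imported wholesale from \cite{beaudry} (``The details of the algebraic duality resolution can be found in \cite{beaudry}''), so your sketch has to be measured against Beaudry's construction, which it gestures at but does not reproduce. The two places where your argument has genuine gaps are exactly the places where all the work lies. There is no ``general principle'' by which $\cF$-projectivity plus dimension counts in $\mathrm{Tor}^{\ZZ_2[[\SS_2^1]]}_\ast(\FF_2,-)$ produces an exact four-term complex with prescribed induced terms. Relative homological algebra in Henn's sense produces $\cF$-projective, $\cF$-exact resolutions --- that is how the \emph{centralizer} resolution arises (Proposition \ref{cent-is-F}) --- but the duality resolution is not obtained that way, and honest exactness (not $\cF$-exactness) of a specific length-three complex must be proved by hand. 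The centralizer resolution, with different isotropy groups at every stage, already shows that such homological bookkeeping cannot by itself dictate the terms, so the claim that the copies of $\ZZ_2[[\SS_2^1/C_6]]$ are ``forced'' is unjustified. In \cite{beaudry} the differentials are written down explicitly and exactness is verified via filtrations of the torsion-free subgroup $K^1=K\cap \SS_2^1$ together with Nakayama-type arguments on associated graded pieces; your proposed middle map ``multiplication by $\pi'-1$'' is not even well defined as a map of left $\ZZ_2[[\SS_2^1]]$-modules out of $\ZZ_2[[\SS_2^1/C_6]]$ until you check that the image of the canonical generator is $C_6$-invariant, and nothing in the sketch addresses this.

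Second, and more seriously, closing off the left-hand end is not a formal consequence of Poincar\'e duality. The group $\SS_2^1$ is only a \emph{virtual} duality group; duality lives on the torsion-free subgroup of dimension $3$, and what it gives, at best, is information about the dual complex --- not an identification of the third syzygy $N_3=\ker\bigl(\ZZ_2[[\SS_2^1/C_6]]\to\ZZ_2[[\SS_2^1/C_6]]\bigr)$ as a module induced from a finite subgroup. Showing that $N_3$ is cyclic, computing the stabilizer of a generator, and in particular pinning down that this stabilizer is $G_{24}'=\pi G_{24}\pi^{-1}$ rather than $G_{24}$ is the technical heart of \cite{beaudry} (as the analogous step is in \cite{GHMR} at $p=3$). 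Since $\pi\notin\SS_2^1$ and conjugation by $\pi$ is an outer automorphism of $\SS_2^1$, the assertion that ``Poincar\'e duality picks up exactly this twist'' is a statement of the answer, not an argument: abstract duality cannot distinguish which of the two non-conjugate copies of $G_{24}$ appears. So your outline names the right ingredients (virtual duality in dimension $3$, the conjugate subgroup $G_{24}'$), but at both critical steps it substitutes an appeal to a principle that does not exist in the generality required for the explicit computations that \cite{beaudry} actually carries out.
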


\begin{rem}\label{dual-alg-ss}  As in Remark \ref{cent-alg-ss} we get a spectral sequence. Suppose we write
\begin{align*}\label{cent-fibers}
Q_0 &= \ZZ_2[[\SS_2^1/G_{24}]]\\
Q_1 &=Q_2 = \ZZ_2[[\SS_2^1/C_6]]\\
Q_3 &= \ZZ_2[[\SS_2^1/G_{24}']]
\end{align*}
Then for any profinite $\SS_2^1$-module $M,$ such as $E_*X=(E_2)_*X$ for some finite spectrum $X$, we get
a spectral sequence
$$
E_1^{p,q} \cong \Ext^q_{\ZZ_2[[\SS_2^1]]}(Q_p,M) \Longrightarrow H^{p+q}(\SS_2^1,M),
$$
which we will call the {\it algebraic duality resolution spectral sequence}.
\end{rem} 

As in Remark \ref{morava-cent} and (\ref{cent-res-morava}) we immediately have the
following consequence.

\begin{cor}\label{morava-dual} There is an exact sequence of twisted $\GG_2$-modules
\begin{equation*}
0 \to E_\ast E^{h\SS_2^1}\to E_\ast E^{hG_{24}} \to 
E_\ast E^{hC_6} \to E_\ast E^{hC_6} \to E_\ast E^{hG_{24}}\to 0.
\end{equation*}
The first of these maps is induced by the map on homotopy fixed point spectra $E^{h\SS_2^1} \to E^{hG_{24}}$
induced by the subgroup inclusion $G_{24} \subseteq \SS_2^1$.
\end{cor}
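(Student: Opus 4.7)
The plan is to follow the template of Remark \ref{morava-cent}: induce the algebraic duality resolution of Theorem \ref{dual-res-alg} up from $\SS_2^1$ to $\GG_2$, then dualize into $E_\ast$ to turn the permutation modules into the desired $E_\ast E^{hF}$'s.

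First, I would apply the induction functor $\ZZ_2[[\GG_2]]\, \widehat{\otimes}_{\ZZ_2[[\SS_2^1]]}\, (-)$ to the exact sequence of Theorem \ref{dual-res-alg}. This functor is exact because $\ZZ_2[[\GG_2]]$ is pro-free as a right $\ZZ_2[[\SS_2^1]]$-module (the profinite coset space $\GG_2/\SS_2^1$ splits off topologically), and it carries $\ZZ_2[[\SS_2^1/F]]$ to $\ZZ_2[[\GG_2/F]]$ for each finite $F$. The outcome is an exact sequence of continuous $\GG_2$-modules
$$0 \to \ZZ_2[[\GG_2/G_{24}']] \to \ZZ_2[[\GG_2/C_6]] \to \ZZ_2[[\GG_2/C_6]] \to \ZZ_2[[\GG_2/G_{24}]] \to \ZZ_2[[\GG_2/\SS_2^1]] \to 0.$$
Since $G_{24}' = \pi G_{24}\pi^{-1}$ with $\pi \in \SS_2 \subseteq \GG_2$ by Remark \ref{2-g24s}, right multiplication by $\pi$ supplies an isomorphism $\ZZ_2[[\GG_2/G_{24}']] \cong \ZZ_2[[\GG_2/G_{24}]]$ of $\GG_2$-modules, allowing me to rewrite the leftmost term.

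Second, I would dualize using the functor $M \mapsto \map(M, E_\ast)$ of continuous $\ZZ_2$-linear maps, equipped with the twisted diagonal action $(g\phi)(x) = g\phi(g^{-1}x)$. On a term $\ZZ_2[[\GG_2/F]]$ this yields $\map(\GG_2/F, E_\ast)$, which is $E_\ast E^{hF}$ as a twisted $\GG_2$-module by the isomorphism (\ref{eq:EstarEFbis}). The map induced on the leftmost piece of the dual sequence, arising from the surjection $\ZZ_2[[\GG_2/G_{24}]] \twoheadrightarrow \ZZ_2[[\GG_2/\SS_2^1]]$, is then the map $E_\ast E^{h\SS_2^1} \to E_\ast E^{hG_{24}}$ coming from $E^{h\SS_2^1} \to E^{hG_{24}}$, by naturality of $F \mapsto E_\ast E^{hF}$ with respect to subgroup inclusions.

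The only technical point that deserves care is exactness after dualization, since $\Hom$ into $E_\ast$ is only a priori left exact. Here every $\ZZ_2[[\GG_2/F]]$ is pro-free over $\ZZ_2$, so the five-term sequence breaks into two short exact sequences whose middle terms (the images/kernels) are again pro-free; continuous $\ZZ_2$-linear duality into the pro-free module $E_\ast$ is then exact on each. This is precisely the mechanism used in Remark \ref{morava-cent} to pass from (\ref{cent-res-alg}) to (\ref{cent-res-morava}), so no essentially new obstacle appears; the corollary is really a repackaging of Theorem \ref{dual-res-alg} through the identification (\ref{eq:EstarEFbis}).
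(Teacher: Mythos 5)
Your proposal is correct and follows essentially the same route as the paper, whose proof of this corollary is precisely the procedure of Remark \ref{morava-cent}: induce the algebraic duality resolution up to $\GG_2$, use conjugation by $\pi$ to identify $\ZZ_2[[\GG_2/G_{24}']]$ with $\ZZ_2[[\GG_2/G_{24}]]$, and dualize termwise into $E_\ast$ via the isomorphism (\ref{eq:EstarEFbis}). Your added remarks on exactness of induction and of the dualization (via pro-freeness and splitting of the resulting short exact sequences) only make explicit what the paper leaves implicit in saying the consequence is immediate.
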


We'd now like to prove the following result, paralleling Theorem \ref{cent-res}; it also appears
in \cite{HennRes}. The main
work of the next two sections and, indeed, the main theorem of this paper is to identify $X$.

\begin{prop}\label{dual-res} The algebraic resolution of \ref{morava-dual} can
be realized by a sequence of spectra
\begin{equation*}
E^{h\SS_2^1} \mathop{\longrightarrow}^q E^{hG_{24}} \rightarrow E^{hC_6} \rightarrow
E^{hC_6} \rightarrow X
\end{equation*}
with $E_\ast X \cong E_\ast E^{hG_{24}}$ as a twisted $\GG_2$-module.
All compositions and all Toda brackets are zero modulo indeterminacy.
\end{prop}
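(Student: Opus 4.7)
The plan is to realize the exact sequence of Corollary \ref{morava-dual} topologically by iterated cofiber sequences, following the strategy for the centralizer resolution (Theorem \ref{cent-res}) and for the $p=3$ duality resolution of \cite{GHMR}.

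The first map $q:E^{h\SS_2^1}\to E^{hG_{24}}$ is induced by the subgroup inclusion $G_{24}\subseteq \SS_2^1$. Form its cofiber $C_0$; since $E_\ast E^{hF}$ is concentrated in even degrees for every finite subgroup $F$, the long exact sequence in $E_\ast$-homology degenerates into short exact sequences, and exactness of the algebraic resolution gives an identification $E_\ast C_0 \cong \mathrm{im}\bigl(E_\ast E^{hG_{24}}\to E_\ast E^{hC_6}\bigr)$ as twisted $\GG_2$-modules. The next step is to exhibit a map $\psi_1:C_0\to E^{hC_6}$ realizing this inclusion. The key tool is the Adams--Novikov spectral sequence
$$
H^s(F,E_t Z) \Longrightarrow \pi_{t-s}F(Z,E^{hF})
$$
for $F$ a finite subgroup and $Z$ a spectrum with $E_\ast Z$ pro-free and even (see Lemma \ref{reduce-to-finite} and Lemma \ref{lem:Shapiro}); the desired $\psi_1$ is detected on $E_2$ by a class in $H^0(C_6,E_0 C_0)$ which must be shown to be a permanent cycle.

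Iterating, at each stage $i=1,2,3$ construct $\psi_i:C_{i-1}\to E^{hF_i}$ realizing the next algebraic map and form $C_i = \mathrm{cofib}(\psi_i)$; the Morava module $E_\ast C_i$ is again read off the algebraic exact sequence. Define $X:=C_3$; by construction $E_\ast X$ is the cokernel of $E_\ast E^{hC_6}\to E_\ast E^{hC_6}$ from Corollary \ref{morava-dual}, which is isomorphic to $E_\ast E^{hG_{24}}$. Consecutive compositions in the resulting four-term sequence are automatically null-homotopic, since each map $E^{hF_{i-1}}\to E^{hF_i}$ factors as $E^{hF_{i-1}}\to C_{i-1}\to E^{hF_i}$; the refinement to a tower of cofiber sequences---equivalent to the vanishing of all Toda brackets modulo indeterminacy---is built into the construction.

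The main technical obstacle is verifying at each stage that the prescribed algebraic map is detected by a permanent cycle in the ANSS for $F(C_{i-1},E^{hF_i})$. This reduces to vanishing statements for higher cohomology groups $H^s(F,E_\ast C_{i-1})$ with $s>0$, which follow from the cohomology computations of Section \ref{ch:coho} (in particular the structure of $H^\ast(F,E_\ast)$ for $F\in\{G_{24},C_6\}$) fed through the long exact sequences associated with the cofiber sequences defining the $C_i$. A subsidiary point is that the lifts $\psi_i$ are not unique; one must check that the remaining indeterminacy does not affect $E_\ast X$ or the nullity of subsequent compositions. Both issues are addressed using the fact that Corollary \ref{morava-dual} arises from an $\cF$-projective resolution over $\ZZ_2[[\SS_2^1]]$ in the sense of \cite{beaudry}, giving enough algebraic rigidity to ensure the topological construction goes through.
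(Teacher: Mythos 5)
Your overall strategy---realize the algebraic maps one at a time, pass to cofibers, and define $X$ as the final cofiber so that $E_\ast X \cong E_\ast E^{hG_{24}}$ comes for free---has the same shape as the paper's argument (compare Remark \ref{start-of-the-proof}), and packaging the Toda-bracket condition as the existence of the tower is fine (modulo an indexing slip: $X$ is the cofiber of $\psi_2\colon C_1 \to E^{hC_6}$, i.e.\ $X=C_2$; there is no $\psi_3$). The gap lies in the two places where you defer to general machinery. First, the claim that realizing each $\psi_i$ ``reduces to vanishing of $H^s(F,E_\ast C_{i-1})$ for $s>0$, which follows from the cohomology computations of Section \ref{ch:coho}'' is not correct: the relevant obstruction groups are cohomology of $C_6$ with coefficients in modules built out of $E_\ast E^{h\SS_2^1}$, $E_\ast E^{hG_{24}}$ and $E_\ast E^{hC_6}$, which by the double-coset analysis of Remark \ref{ghmres} are (limits of) products of $H^\ast(K,E_\ast)$ for the various $K\subseteq C_6$; these do \emph{not} vanish in positive degrees, and the descent spectral sequences for such $K$ carry nontrivial $d_3$ and $d_7$ differentials, so no $E_2$-level statement can by itself show your $H^0$-classes are permanent cycles. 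What the paper actually uses (proof of Lemma \ref{dual-res-lem}) is the homotopy-level fact that $\pi_0E^{hK}=H^0(K,E_0)$ for every $K\subseteq C_6$ (Propositions \ref{homotopyc2} and \ref{homotopyc6}), fed through (\ref{thegencase-homotopy}), to prove that the Hurewicz map $\pi_0F(E^{hF},E^{hC_6})\to \Hom_{Mor}(E_0E^{hF},E_0E^{hC_6})$ is an isomorphism for the relevant $F$; that is the mechanism which both produces the maps and kills the compositions. (Also, the spectral sequence you quote should have coefficients the $E$-cohomology of $Z$, not $E_tZ$; for dualizable $Z$ with pro-free even $E_\ast Z$ this is repairable, but it needs to be said.)

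Second, and more seriously, the secondary obstruction does not disappear by working cofiber-by-cofiber, and it is not controlled by ``algebraic rigidity from $\cF$-projectivity.'' To construct $\psi_2$ on $C_1$ you must choose a null-homotopy of $E^{h\SS_2^1}\to E^{hG_{24}}\to E^{hC_6}$ compatible with the next map; this is exactly the Toda bracket of the paper's formulation, and $\cF$-projectivity (which the paper establishes only for the centralizer resolution, Proposition \ref{cent-is-F}, and which in any case is a purely algebraic statement) gives no hold on it. The paper disposes of it by a concrete, non-formal computation: the restriction $\pi_\ast F(E^{hG_{24}},E^{hC_6})\to \pi_\ast F(E^{h\SS_2^1},E^{hC_6})$ is surjective, so the bracket's indeterminacy is the entire group, and the surjectivity is proved by exhibiting a $C_6$-equivariant splitting of the coset projection $\GG_2/G_{24}\to \GG_2/\SS_2^1$ using the element $\pi=1+2\omega$ and the Frobenius. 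Some input of this kind is required, and your proposal does not supply it.
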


\begin{rem}\label{start-of-the-proof} A consequence of the last sentence of this result is that
this resolution can be refined to a tower of fibrations under $E^{h\SS_2^1}$
\begin{equation}\label{duality-tower}
\xymatrix{
E^{h\SS_2^1} \rto & Z_2 \rto & Z_1 \rto& E^{hG_{24}} \\
\Sigma^{-3}X \uto
& \Sigma^{-2}E^{hC_6} \uto &
\Sigma^{-1}E^{hC_6} \uto
}
\end{equation}
or to a tower over $E^{h\SS_2^1}$
\begin{equation}\label{duality-cofs}
\xymatrix@C=10pt@R=12.5pt{
E^{h\SS_2^1} \ar[dr]_q && \ar@{-->}[ll] D_1\ar[dr] && \ar@{-->}[ll] D_2\ar[dr] && \ar@{-->}[ll] D_3\ar[dr]^\simeq\\
&E^{hG_{24}}\ar[ur]&&E^{hC_6}\ar[ur]&&E^{hC_6}\ar[ur]&&X\ .
}
\end{equation}
As in Remark \ref{homology-cent-res} the dotted arrows have Adams-Novikov filtration 1.
Examining this last diagram, we see that $X$ can be {\it defined} as the cofiber of 
$D_2 \to E^{hC_6}$ and it will follow, as in Remark \ref{homology-cent-res}, that
$E_\ast X \cong E_\ast E^{hG_{24}}$. Thus Proposition \ref{dual-res} is equivalent to the following result. See also \cite{HennRes} or \cite{thesis}.
\end{rem}

\begin{lem}\label{dual-res-lem} The truncated resolution
\begin{equation*}
0 \to E_\ast E^{h\SS_2^1}\to E_\ast E^{hG_{24}} \to 
E_\ast E^{hC_6} \to E_\ast E^{hC_6}
\end{equation*}
can be realized by a sequence of spectra
$$
E^{h\SS_2^1} \longr E^{hG_{24}} \to E^{hC_6} \to E^{hC_6}
$$ 
such that all compositions are zero and the one Toda bracket is zero modulo indeterminacy. 
\end{lem}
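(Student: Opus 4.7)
The plan is to build the four-term tower from left to right, realizing each algebraic map of Corollary \ref{morava-dual} as a map of spectra, and then showing that the two consecutive compositions and the one Toda bracket vanish (the latter modulo indeterminacy). The leftmost map $q: E^{h\SS_2^1} \to E^{hG_{24}}$ is simply the natural transformation of homotopy fixed points induced by the inclusion $G_{24} \subseteq \SS_2^1$; on Morava modules it realizes the algebraic map by Remark \ref{EstarEF}.

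For the two remaining maps, the tool is the descent spectral sequence for the mapping spectrum: for a closed subgroup $F_1 \subseteq \GG_2$ and a finite subgroup $F_2 \subseteq \GG_2$, combining the equivalence (\ref{thegencase-homotopy}) of Remark \ref{ghmres} with a Shapiro-Lemma argument (Lemma \ref{lem:Shapiro}) yields a spectral sequence
$$
E_2^{s,t} \cong H^s(F_1, E_t E^{hF_2}) \Longrightarrow \pi_{t-s} F(E^{hF_1}, E^{hF_2}).
$$
Frobenius reciprocity identifies $E_2^{0,0}$ with $\Hom_{\GG_2}(E_\ast E^{hF_1}, E_\ast E^{hF_2})$, and since no differential can target this spot, every class there is a permanent cycle that lifts to $\pi_0$. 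This produces spectrum maps $f_1: E^{hG_{24}} \to E^{hC_6}$ and $f_2: E^{hC_6} \to E^{hC_6}$ realizing the relevant algebraic maps, each well-defined up to a correction of positive Adams--Novikov filtration.

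To verify that each consecutive composition is null-homotopic, observe that the induced map on Morava modules vanishes by exactness of the algebraic resolution; hence the spectrum-level composition lies in filtration $\geq 1$ of the descent spectral sequence for the appropriate mapping spectrum. The obstruction to strict nullity lives in $\bigoplus_{s\geq 1} E_\infty^{s,s}$, and one then modifies the upstream map $f_i$ by a correction of positive filtration (which does not affect the algebraic content of the map) chosen to kill the obstruction. The relevant groups $H^s(F_1, E_s E^{hC_6})$ for $F_1 \in \{\SS_2^1, G_{24}\}$ can be approached via Lemma \ref{coh-decomp-galois} and, in the $\SS_2^1$-case, via the algebraic centralizer resolution spectral sequence of Remark \ref{cent-alg-ss}, thereby reducing everything to the finite-group cohomology calculations compiled in Section \ref{ch:coho}.

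With both compositions null-homotopic, the Toda bracket $\langle q, f_1, f_2 \rangle \subseteq [E^{h\SS_2^1}, \Sigma^{-1} E^{hC_6}]$ is defined, and we must show it contains $0$ modulo the indeterminacy $q^\ast [E^{hG_{24}}, \Sigma^{-1} E^{hC_6}] + (f_2)_\ast [E^{h\SS_2^1}, \Sigma^{-1} E^{hC_6}]$. Again using the descent spectral sequence for $F(E^{h\SS_2^1}, E^{hC_6})$ in total degree $-1$, we estimate the ambient group and the indeterminacy and verify that the bracket lies inside the latter. The main obstacle throughout is the precise cohomology bookkeeping demanded by these last two steps: at $p=2$ every relevant finite subgroup contains the central $C_2$, so the cohomology rings are large and carry nontrivial extensions (cf.\ the $bo$-patterns and pure $K(2)$-classes in Section \ref{ch:coho}), and the obstructions cannot be dispatched by sparsity alone but only by exploiting the particular form of the algebraic maps in Corollary \ref{morava-dual} to pair up and cancel contributions.
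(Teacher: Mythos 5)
Your proposal has the right overall shape (realize the maps, kill the two compositions, then dispose of the bracket), and the first map is handled exactly as in the paper, but each of the three technical steps has a genuine gap. First, your realization of $f_1,f_2$ rests on the claim that every class in $E_2^{0,0}$ of your descent spectral sequence is a permanent cycle ``since no differential can target this spot''; that is the wrong direction. A class in bidegree $(0,0)$ is indeed never hit by a differential, but it can support differentials $d_r\colon E_r^{0,0}\to E_r^{r,r-1}$, so survival to $\pi_0$ is not automatic. Second, your argument that the compositions can be made null asserts that an obstruction in $\bigoplus_{s\geq 1}E_\infty^{s,s}$ can be killed by modifying the upstream map by a positive-filtration correction, but you never show such corrections exist or can reach the obstruction (changing $f_1$ by $\delta$ only changes $f_1\circ q$ by $q^\ast\delta$, so you would need $q^\ast$ to hit the obstruction class). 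The paper avoids both problems at once by putting the \emph{finite} group on the outside: $F(E^{hF},E^{hC_6})\simeq E[[\GG_2/F]]^{hC_6}$, and then (\ref{thegencase-homotopy}) together with the fact that $\pi_0E^{hK}=H^0(K,E_0)$ for every $K\subseteq C_6$ (Section \ref{ch:coho}) shows that the Hurewicz map $\pi_0F(E^{hF},E^{hC_6})\to \Hom_{Mor}(E_0E^{hF},E_0E^{hC_6})$ is an \emph{isomorphism} for $F=\SS_2^1$ and $F=G_{24}$. Surjectivity gives the realization of the maps, and injectivity gives the strict vanishing of the compositions (their Morava-module images vanish by exactness), with no obstruction bookkeeping at all. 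Your spectral sequence, with the infinite group $\SS_2^1$ acting on the outside, leaves exactly the vanishing you need unproven, and the cohomology of $\SS_2^1$ with these coefficients is not something you can simply read off from Section \ref{ch:coho}.

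Third, for the Toda bracket, ``estimate the ambient group and the indeterminacy and verify the bracket lies inside the latter'' is a plan, not a proof, and the cancellation of contributions you invoke is never exhibited. The paper's actual mechanism is structural: the indeterminacy is the \emph{entire} group, because $q^\ast\colon\pi_\ast F(E^{hG_{24}},E^{hC_6})\to\pi_\ast F(E^{h\SS_2^1},E^{hC_6})$ is surjective. This follows by rewriting both sides via (\ref{mappingf}) as $E[[\GG_2/G_{24}]]^{hC_6}\to E[[\GG_2/\SS_2^1]]^{hC_6}$ and observing that the coset projection $\GG_2/G_{24}\to\GG_2/\SS_2^1$ admits a $C_6$-equivariant splitting $\pi^i\phi^\epsilon\SS_2^1\mapsto \pi^i\phi^\epsilon G_{24}$. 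Some input of this kind is needed; the bracket computation cannot be completed from filtration estimates alone as your sketch suggests.
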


\begin{proof}
The map $E^{h\SS_2^1} \xrightarrow{q} E^{hG_{24}}$ is induced by the inclusion map on homotopy fixed point spectra 
induced by the subgroup inclusion $G_{24} \subseteq \SS_2^1$. To realize the 
other maps and to show the compositions are zero, we prove that Hurewicz map
$$
\pi_0F(E^{hF},E^{hC_6}) \longr \Hom_{Mor}(E_0E^{hF},E_0E^{hC_6})
$$
to the category of Morava modules is an isomorphism for $F=\SS_2^1$ or $F=G_{24}$. To see this, first
note there is an isomorphism
$$
\pi_0F(E^{hF},E^{hC_6}) = \pi_0E[[\GG_2/F]]^{hC_6} \cong H^0(C_6,E_0[[\GG_2/F]]).
$$
This follows from (\ref{thegencase-homotopy}) and the fact that $\pi_0E^{hK} = H^0(K,E_0$) whenever
$K \subseteq C_6$; see \S \ref{homotopyC2}. Then we can finish the argument by using (\ref{EstarEF}) and 
(\ref{mappingf}) to show
\begin{align*}
H^0(C_6,E^0[[\GG_2/F]]) &\cong
\Hom_{E_0[[\GG_2]]}(E_0[[\GG_2/C_6]],E_0[[\GG_2/F]])\\
&\cong  \Hom_{Mor}(E_0E^{hF},E_0E^{hC_6})\ .
\end{align*}

This leaves the Toda bracket. To see that it is zero modulo indeterminacy we show that the indeterminacy is the
entire group. To be specific, we show that the inclusion $E^{h\SS_2^1} \to E^{hG_{24}}$
induces a surjection
$$
\pi_\ast F(E^{hG_{24}},E^{hC_6}) \longr \pi_\ast F(E^{h\SS_2^1},E^{hC_6}).
$$
Using (\ref{mappingf}), we can rewrite this map as 
$$
\pi_\ast E[[\GG_2/G_{24}]]^{hC_6} \longr \pi_\ast E[[\GG_2/\SS_2^1]]^{hC_6}.
$$
Again using (\ref{mappingf}), the inclusion $E^{h\SS_2^1} \to E^{hG_{24}}$ induces a map of
$C_6$-spectra
$$
E[[\GG_2/G_{24}]] \simeq F(E^{hG_{24}},E) \to F(E^{h\SS_2^1},E) \simeq E[[\GG_2/\SS_2^1]].
$$
It is thus sufficient to show that the quotient map on cosets
$$
\GG_2/G_{24} \longr \GG_2/\SS_2^1
$$
has a $C_6$-splitting. Since $\GG_2 \cong \SS_2 \rtimes \Gal(\FF_4/\FF_2)$, every coset in $\GG_2/\SS_2^1$
has a representative of form $\pi^i\phi^\epsilon\SS_2^1$ where $\pi$ is as in Remark \ref{2-g24s}, $\phi
\in \Gal(\FF_4/\FF_2)$ is the Frobenius, $i \in \ZZ_2$, and $\epsilon =0$ or $1$. The splitting is then given by
$$
\pi^i\phi^\epsilon\SS_2^1 \longmapsto \pi^i\phi^\epsilon G_{24}.
$$
\end{proof} 

\subsection{Comparing the two resolutions} There is a map from the
centralizer tower to the duality tower; we will not prove that here.
In the end we will only need a small part of the data given by such a map, and what we need
is in Remark \ref{compare-ss}. 

\begin{rem}\label{F-res} The underlying algebra for the centralizer resolutions fits well
the relative homological algebra usually deployed in building an Adams-Novikov tower;
this goes back to Miller in \cite{Miller}, among other sources.

Here is more detail. Let $\cF$ be the set of conjugacy classes of finite subgroups
of $\SS_2^1$. A continuous $\SS_2^1$-module $P$ is {\it $\cF$-projective} if the natural map
$$
\bigoplus_{F \in \cF} \ZZ_2[[\SS_2^1]] \otimes_{\ZZ_2[[F]]} P \longr P
$$ 
is split surjective, where $F$ runs over representatives for the classes in $\cF$. The class of $\cF$-projectives
is the smallest class of continuous $\SS_2^1$-modules closed under finite sums, retracts, and containing all induced 
modules $\ZZ_2[[\SS_2^1]] \otimes_{\ZZ_2[[F]]} M$, where $M$ is a continuous $F$-module. 

The class of $\cF$-projectives defines a class of $\cF$-exact morphisms, there are enough $\cF$-projectives,
there are $\cF$-projective resolutions, and so on. All of this and more is discussed in \S 3.5 of \cite{HennRes}.
\end{rem}

Comparing the two towers now begins with the following result, see Remark (d) after Proposition 17 in \cite{HennRes}.

\begin{prop}\label{cent-is-F} The centralizer resolution (\ref{cent-res-alg}) is an $\cF$-projective resolution
of the trivial $\SS_2^1$-module $\ZZ_2$. 
\end{prop}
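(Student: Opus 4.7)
The plan is to verify the two defining properties of an $\cF$-projective resolution: each module appearing is $\cF$-projective, and the augmented complex is $\cF$-acyclic. Exactness of the sequence itself has already been asserted in Theorem \ref{cent-res-thm}, so it is only the stronger $\cF$-acyclicity (together with $\cF$-projectivity of the terms) that must be checked.

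For the first property, each term of the resolution has the form $\ZZ_2[[\SS_2^1/F]]$ for a finite subgroup $F\in\{G_{24},G_{24}',C_6,C_4,C_2\}$. Under the standard identification
$$\ZZ_2[[\SS_2^1/F]]\;\cong\;\ZZ_2[[\SS_2^1]]\otimes_{\ZZ_2[[F]]}\ZZ_2,$$
each such term is an induced module from a finite subgroup, hence lies in the smallest class of continuous $\SS_2^1$-modules closed under finite sums and retracts containing all induced modules. By Remark \ref{F-res} this class is exactly the class of $\cF$-projectives, so all terms are $\cF$-projective.

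For the second property, $\cF$-acyclicity of the augmented complex $P_\bullet\to\ZZ_2$ is the statement that $\Hom_{\SS_2^1}(Q,-)$ carries the augmented complex to an exact complex of abelian groups for every $\cF$-projective $Q$. By the closure properties recalled in Remark \ref{F-res}, it suffices to test with $Q=\ZZ_2[[\SS_2^1]]\otimes_{\ZZ_2[[F']]}M$ where $F'$ runs over representatives of the conjugacy classes in $\cF$ and $M$ over continuous $F'$-modules. The $(\otimes,\Hom)$ adjunction reduces this to asking that the restriction $\operatorname{Res}_{F'}(P_\bullet)\to\ZZ_2$ be a split exact complex of continuous $\ZZ_2[[F']]$-modules, equivalently that it be chain contractible, for every finite $F'\subseteq\SS_2^1$.

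The main obstacle is verifying this contractibility of restrictions, and this is the geometric heart of the matter. The cleanest route is to realize the algebraic centralizer resolution as the augmented cellular chain complex of a small $\SS_2^1$-simplicial complex $X$ built from the poset of finite subgroups featured in (\ref{cent-res-alg}); under the double coset decomposition
$$\operatorname{Res}_{F'}\ZZ_2[[\SS_2^1/F]]\;\cong\;\bigoplus_{F'xF\in F'\backslash\SS_2^1/F}\ZZ_2[[F'/(F'\cap xFx^{-1})]],$$
the restricted complex becomes the augmented chain complex of the fixed-point subcomplex $X^{F'}$. It then suffices to show, subgroup by subgroup, that $X^{F'}$ is contractible for every finite $F'\subseteq\SS_2^1$, which can be done using Smith-theoretic arguments together with the explicit description of the finite subgroup lattice of $\SS_2^1$ provided in \cite{beaudry} and \cite{HennRes}. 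Once the fixed-point subcomplexes are shown to be contractible, $\cF$-acyclicity follows at once and, combined with the first step, proves that (\ref{cent-res-alg}) is an $\cF$-projective resolution of $\ZZ_2$.
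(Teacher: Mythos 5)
Your first step ($\cF$-projectivity of the terms) and your adjunction reduction of $\cF$-acyclicity to split exactness of the restrictions $\mathrm{Res}_{F'}(P_\bullet)\to\ZZ_2$ for finite $F'\subseteq\SS_2^1$ are fine. The gap is in the step you yourself call the geometric heart of the matter: you assume that the centralizer resolution (\ref{cent-res-alg}) can be realized as the augmented cellular chain complex of an $\SS_2^1$-simplicial complex $X$ built from the poset of finite subgroups, but no such complex is constructed, and this is not how the centralizer resolution arises (unlike the duality resolution at $p=3$, which does come from an action on a building, the centralizer resolution is produced purely by relative homological algebra from centralizer/detection results). Moreover, even granting such an $X$, your identification is off: the double coset decomposition you write exhibits $\mathrm{Res}_{F'}\ZZ_2[[\SS_2^1/F]]$ as the chains of $X$ regarded as an $F'$-complex with cell stabilizers $F'\cap xFx^{-1}$, \emph{not} as the chains of the fixed-point subcomplex $X^{F'}$. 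To get a chain contraction over $\ZZ_2[[F']]$ you would need $F'$-equivariant contractibility of $X$, i.e.\ contractibility of $X^{H}$ for every subgroup $H\le F'$ together with an equivariant Whitehead argument; contractibility of $X^{F'}$ alone, or Smith-theoretic information, does not produce the required splitting. Since neither the existence of $X$ nor the fixed-point contractibility is established, the key step of your proof is missing.

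For comparison, the paper does not prove this proposition from scratch: it cites Remark (d) after Proposition 17 of \cite{HennRes}, where the resolution of Theorem \ref{cent-res-thm} is constructed inside the relative homological algebra recalled in Remark \ref{F-res}. There one shows that the augmentation from $P_0=\ZZ_2[[\SS_2^1/G_{24}]]\oplus\ZZ_2[[\SS_2^1/G'_{24}]]$ is an $\cF$-epimorphism (this is where the centralizer-detection input enters), then resolves the successive kernels by $\cF$-epimorphisms from induced modules and checks that the last kernel is itself induced; thus $\cF$-exactness holds essentially by construction. A self-contained argument would have to reproduce that work, or else verify split exactness of the restriction to each conjugacy class of finite subgroups directly using the explicit subgroup structure of $\SS_2^1$ from \cite{beaudry}, rather than appeal to a hypothetical simplicial model.
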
 

Thus if we write $P_\bullet \to \ZZ_2$ for the centralizer resolution (\ref{cent-res-alg}) and $Q_\bullet \to \ZZ_2$
for the duality resolution (\ref{duality-res}), then standard homological algebra gives
us a map of resolutions, unique up to chain homotopy
$$
\xymatrix{
Q_\bullet \rto \dto_{g_\bullet} & \ZZ_2 \dto^=\\
P_\bullet \rto &\ZZ_2\ .
}
$$
The map $g_0: Q_0 \to P_0$
can be chosen to be the inclusion onto the first factor

\begin{equation}\label{fzerois}
\xymatrix{
Q_0 = \ZZ_2[[\SS_2^1/G_{24}]] \rto^-{i_1} &  \ZZ_2[[\SS_2^1/G_{24}]] \oplus  \ZZ_2[[\SS_2^1/G'_{24}]]=P_0\ .
}
\end{equation}

\begin{rem}\label{compare-ss} 
This immediately gives a map from the centralizer resolution spectral sequence of Remark \ref{cent-alg-ss} to the 
duality resolution spectral sequence of Remark \ref{dual-alg-ss}. This map is independent of the choice of $g_\bullet$ 
at the $E_2$-page. This can be lifted to a map from the centralizer tower to the duality tower,
although we don't need that here and won't prove it. We note that (\ref{fzerois})
implies there is a commutative diagram where the horizontal maps are the
edge homomorphisms of the two spectral sequences
$$
\xymatrix{
H^\ast(\SS_2^1,E_\ast) \rto^-{p^\ast} \dto_= & H^\ast(G_{24},E_\ast) \times H^\ast(G_{24},E_\ast) \dto^{(g_0)^\ast}\\
H^\ast(\SS_2^1,E_\ast) \rto_-{q^\ast}  & H^\ast(G_{24},E_\ast)
}
$$
and $(g_0)^\ast$ is projection onto the first factor. This can be realized by a diagram of spectra, where the
map $g_0$ is again projection onto the first factor
$$
\xymatrix{
E^{h\SS_2^1}\rto^-{p} \dto_= & E^{hG_{24}}  \times E^{hG_{24}}\dto^{g_0}\\
E^{h\SS_2^1} \rto_-{q}  & E^{hG_{24}}\ .
}
$$
\end{rem}

\section{Constructing elements in $\pi_{192k+48}X$}\label{ch:homotopy}

We now turn to the analysis of the homotopy groups of $X$, where $\Sigma^{-3}X$ is the 
top fiber in the duality tower; see Proposition \ref{dual-res}. We have an isomorphism of Morava modules
$E_\ast X \cong E_\ast E^{hG_{24}}$ and hence, by Proposition \ref{e2-is-grp-cohio} and
Lemma \ref{reduce-to-finite}, a spectral sequence
$$
H^\ast (G_{24},E_\ast) \Longrightarrow \pi_\ast X.
$$
See also Remark \ref{rem:why-did-we-bother}. 
The cohomology if $G_{24}$ is discussed in Theorem \ref{coh-G48}. 
In this section we show, roughly, that $\Delta^{8k+2} \in H^0(G_{24},E_{192k + 48})$ is a permanent
cycle---which would certainly be necessary if our main result is true.
The exact result is below in Corollary \ref{describer}. In the next section, we will use this and a mapping
space argument to finish the identification of the homotopy type $X$. 

The statements and the arguments in this section have a rather fussy nature because the
spectrum $X$ has no {\it a priori} ring or module structure and, in particular, the $\WW$-algebra structure on
$H^\ast(G_{24},E_\ast)$ does not immediately extend to a $\WW$-module structure on $\pi_\ast X$.

The results of this section were among the main results in the first author's thesis \cite{thesis} and the key
ideas for the entire project can be found there.  

We begin by combining Remark \ref{coh-implies} and Lemma \ref{theuhrclassesG48} to obtain the following result. 
Note that $45 \equiv 5$ modulo $8$, so there is no contributions
from the $bo$-patterns in that degree. In all degrees the $bo$-patterns lie in Adams-Novikov filtration at most
$2$. The following is an immediate consequence of Lemma \ref{more-split} and Lemma \ref{theuhrclassesG48}.

\begin{lem}\label{theuhrclasses} There is an isomorphism
$$
\FF_4 \cong \pi_{45} E^{hG_{24}}.
$$
We can chose an $\FF_4$ generator detected by the class
$$
\Delta\kappabar\eta \in H^5(G_{24},E_{50}).
$$
The class $\Delta\kappabar^2\eta^2 \in H^{10}(G_{24},E_{76})$ is a non-zero permanent
cycle detecting an $\FF_4$ generator of the subgroup of $\pi_{66}E^{hG_{24}}$ consisting of the elements
of Adams-Novikov filtration greater than $2$.
\end{lem}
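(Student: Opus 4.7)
The plan is to deduce both statements from the analogous assertion for $G_{48}$ (Lemma \ref{theuhrclassesG48}) by means of the Galois descent provided by Lemma \ref{more-split} and the comparison of spectral sequences recorded in Remark \ref{coh-implies}.

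First I would apply Lemma \ref{more-split} with $K = G_{48}$ and $K_0 = G_{48} \cap \SS_2 = G_{24}$: since $G_{48} \cong G_{24} \rtimes \Gal(\FF_4/\FF_2)$, the hypothesis $K/K_0 \cong \Gal$ is satisfied, yielding a $\Gal$-equivariant equivalence $\Gal^+ \wedge E^{hG_{48}} \simeq E^{hG_{24}}$. By Remark \ref{coh-implies}, the Adams--Novikov spectral sequence converging to $\pi_\ast E^{hG_{24}}$ is obtained from the one for $\pi_\ast E^{hG_{48}}$ by applying the exact functor $\WW \otimes_{\ZZ_2} (-)$, with $\WW$-linearly extended differentials, and this identification is an isomorphism of filtered abelian groups at the abutment. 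In particular, a class $x \in H^s(G_{48}, E_t)$ is a $d_r$-cycle, a $d_r$-boundary, or a permanent cycle if and only if its image under $H^s(G_{48}, E_t) \to \WW \otimes_{\ZZ_2} H^s(G_{48}, E_t) \cong H^s(G_{24}, E_t)$ has the corresponding property.

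With this in hand, the first statement is immediate: Lemma \ref{theuhrclassesG48} asserts that $\Delta\kappabar\eta \in H^5(G_{48}, E_{50})$ is a permanent cycle detecting a $\ZZ/2$-generator of $\pi_{45}E^{hG_{48}}$, so its image in $H^5(G_{24}, E_{50})$ is a permanent cycle and
\[
\pi_{45}E^{hG_{24}} \;\cong\; \WW \otimes_{\ZZ_2} \pi_{45}E^{hG_{48}} \;\cong\; \WW/2\WW \;\cong\; \FF_4.
\]
For the second statement, Lemma \ref{theuhrclassesG48} provides that $\Delta\kappabar^2\eta^2 \in H^{10}(G_{48},E_{76})$ is a non-zero permanent cycle detecting a $\ZZ/2$-generator of the pure $K(2)$-subgroup of $\pi_{66}E^{hG_{48}}$; as this generator is detected in filtration $10$, it lies in the filtration $>2$ part. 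Tensoring with $\WW$, its image in $H^{10}(G_{24}, E_{76})$ remains a non-zero permanent cycle, and the subgroup it generates in $\pi_{66}E^{hG_{24}}$ becomes $\WW \otimes_{\ZZ_2} \ZZ/2 \cong \FF_4$, still sitting in the filtration $>2$ subgroup because the filtered-group structure at the abutment is transported along the Galois descent.

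There is no substantial obstacle; essentially everything is packaged into Remark \ref{coh-implies}. The only point requiring genuine attention is ensuring that the Adams--Novikov filtration on the abutment transports under $\WW \otimes_{\ZZ_2}(-)$ so that ``filtration $>2$'' on the $G_{24}$ side really does correspond to the same notion on the $G_{48}$ side---which is exactly what the isomorphism of spectral sequences in Remark \ref{coh-implies} guarantees.
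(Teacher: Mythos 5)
Your proposal is correct and follows essentially the same route as the paper: the paper deduces this lemma directly from Lemma \ref{theuhrclassesG48} together with the Galois descent of Lemma \ref{more-split} and the isomorphism of spectral sequences in Remark \ref{coh-implies} (noting, as you implicitly do, that the $bo$-patterns sit in filtration at most $2$ and vanish in degree $45 \equiv 5$ mod $8$). The identification $\WW \otimes_{\ZZ_2} \ZZ/2 \cong \FF_4$ and the transport of filtration along the $\WW$-linear isomorphism are exactly the points the paper relies on.
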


Let $p: E^{h\SS_2^1} \to E^{hG_{24}} \times E^{hG_{24}}$ be the augmentation in the
topological centralizer resolution of Theorem \ref{cent-res}. This is the same map as from
the top to  the bottom of the centralizer resolution tower (\ref{cent-tower}).

\begin{lem}\label{thefirstcalculation} Let $k \in \ZZ$. The map
$$
p_\ast:\pi_{192k+45}E^{h\SS_2^1} \longr \pi_{192k+45} (E^{hG_{24}} \times E^{hG_{24}})
$$
is surjective. If $x \in \pi_{192k+45}E^{h\SS_2^1}$ has the property that $p_\ast(x) \ne 0$,
then $x$ has Adams-Novikov filtration at most $5$, $x\kappabar\eta \ne 0$, and
$x\kappabar\eta$ is detected by a class of  Adams-Novikov filtration  at most 10.
\end{lem}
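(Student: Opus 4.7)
The plan is to exploit the centralizer resolution spectral sequence (CRSS) of Theorem \ref{cent-res}, which takes the form $E_1^{s,t} = \pi_t F_s \Rightarrow \pi_{t-s} E^{h\SS_2^1}$ with $d_r \colon E_r^{s,t} \to E_r^{s+r,\, t+r-1}$ and whose edge homomorphism is precisely $p_\ast$. Surjectivity of $p_\ast$ on $\pi_{192k+45}$ amounts to showing every differential out of $E_1^{0,\,192k+45}$ has vanishing target; the filtration statements will then fall out of Lemma \ref{theuhrclasses} and the functoriality of the Adams-Novikov filtration under $p$.

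For surjectivity I would compute each of the possible target groups in turn. When $r=1$, the target is $\pi_{192k+45}(E^{hC_6}\times E^{hC_4})$; the $48$-periodicity of $E^{hC_6}$ and the $32$-periodicity of $E^{hC_4}$ reduce this to $\pi_{45}E^{hC_6}\oplus \pi_{13}E^{hC_4}$, both of whose summands vanish by Propositions \ref{homotopyc6} and \ref{homotopyc4}, since $45, 13 \equiv 5 \pmod 8$ lies outside the $bo$-pattern range $\{0,1,2,4\}$ and neither $45$ nor $13$ appears in the enumerated lists of pure $K(2)$-generator degrees. When $r=2$, the target reduces by $16$-periodicity to $\pi_{14}E^{hC_2}$, which vanishes by Proposition \ref{homotopyc2} for the same reason. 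When $r=3$, the target $\pi_{47}E^{hC_6}$ vanishes by the same inspection. Differentials with $r\geq 4$ are automatic since the CRSS is supported on $s = 0, 1, 2, 3$. Consequently $E_\infty^{0,\,192k+45} = E_1^{0,\,192k+45}$ and $p_\ast$ is surjective.

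Next, Lemma \ref{theuhrclasses} together with $192$-periodicity via $\Delta^8$ identifies each summand of $\pi_{192k+45}F_0$ with $\FF_4$ concentrated in Adams-Novikov filtration exactly $5$, generated by the class detected by $\Delta^{8k+1}\kappabar\eta \in H^5(G_{24},E_{192k+50})$. Because $p$ is a map of spectra, it induces a filtration-preserving map of Adams-Novikov towers; hence if $p_\ast x \neq 0$, which lives in filtration $5$, then $x$ itself has Adams-Novikov filtration at most $5$.

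For the final claim, since $\kappabar\eta \in \pi_{21}S^0$ acts through the unit, $p_\ast(x\kappabar\eta) = p_\ast(x)\cdot\kappabar\eta$ in $\pi_{192k+66}F_0$, and at the $E_\infty$-level of the Adams-Novikov spectral sequence for $E^{hG_{24}}$ this product of representatives equals $\Delta^{8k+1}\kappabar\eta\cdot\kappabar\eta = \Delta^{8k+1}\kappabar^2\eta^2 \in H^{10}(G_{24}, E_{192k+76})$. By Lemma \ref{theuhrclasses} the latter is a non-zero permanent cycle, so $p_\ast(x\kappabar\eta)$ is non-zero and detected at filtration $10$; functoriality of the Adams-Novikov filtration then gives both $x\kappabar\eta \neq 0$ and Adams-Novikov filtration at most $10$. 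The main obstacle is the degree bookkeeping in the surjectivity step: one must verify that the residues $45, 13, 14, 47$ modulo the respective periodicities $48, 32, 16, 48$ all miss both the $bo$-patterns and the lists of pure $K(2)$-generator degrees from Section \ref{ch:coho}. Once this sparseness is in place, the rest is formal.
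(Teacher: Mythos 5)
Your argument is correct and is essentially the paper's proof: you run the homotopy spectral sequence of the centralizer tower, check that the targets of the differentials out of $E_1^{0,192k+45}$ vanish (the same vanishing of $\pi_{45}E^{hC_6}$, $\pi_{45}E^{hC_4}$, $\pi_{46}E^{hC_2}$, $\pi_{47}E^{hC_6}$ that the paper cites from Propositions \ref{homotopyc2}, \ref{homotopyc6}, \ref{homotopyc4}), and then deduce the filtration and $\kappabar\eta$-multiplication statements from Lemma \ref{theuhrclasses} together with multiplicativity of the Adams--Novikov spectral sequence. The only difference is cosmetic: you spell out the detection-of-products step that the paper leaves implicit.
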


\begin{proof} For the first statement we examine the homotopy spectral sequence of the centralizer
tower (\ref{cent-tower-ss}). In this case this spectral sequence reads
$$
\pi_tF_s \Longrightarrow \pi_{t-s}E^{h\SS_2^1}.
$$
The fibers $F_s$ are spelled out in (\ref{cent-fibers}). We are asking that the edge homomorphism
$$
p_\ast:\pi_{192k+45}E^{h\SS_2^1} \longr \pi_{192k+45}F_0
$$
be surjective. The crucial input is that
$$
\pi_kE^{hC_6} \subseteq \pi_kE^{hC_2} = 0
$$
for $k=45$, $46$, and $47$ and that $\pi_{45}E^{hC_4}=0$. See Proposition \ref{homotopyc6}
and Figure 2.

The final statement follows from Lemma \ref{theuhrclasses}.
\end{proof}

Now let $q: E^{h\SS_2^1} \to E^{hG_{24}}$ be the augmentation in the
topological duality resolution of Proposition \ref{dual-res}. This is also the projection
from the top to the bottom of the duality resolution tower (\ref{duality-tower}).
Let $i:\Sigma^{-3}X \to E^{h\mathbb{S}_2^1}$ be the map from the top fiber of the duality tower.
Consider the commutative diagram
\begin{equation}\label{diagramr}
\xymatrix{
\pi_{192k+ 45}\Sigma^{-3}X \rto^{i_\ast}\dto_{r} & \pi_{192k+ 45} E^{h\SS_2^1} \rto^-{=}\dto^{p_\ast} &
\pi_{192k+ 45} E^{h\SS_2^1} \dto^{q_\ast} \\
\FF_4 \rto & \pi_{192k+ 45}(E^{hG_{24}} \times E^{hG_{24}}) \rto_-{(g_0)_\ast} & \pi_{192k+ 45}E^{hG_{24}}.
}
\end{equation}
The bottom row is short exact and induced by the map between the resolutions. See Remark \ref{compare-ss}.
The map $r$  is defined by this diagram and the fact that the composition 
$$
\pi_\ast \Sigma^{-3}X \longr \pi_\ast E^{h\SS_2^1} \mathop{\longr}^{q_\ast} \pi_\ast E^{hG_{24}}
$$
is zero.

\begin{prop}\label{thecrucialelement} The map
$$
r:\pi_{192k+45}\Sigma^{-3}X \to \FF_4
$$
is surjective. If $y \in \pi_{192k+45}\Sigma^{-3}X$ is any class so that
$r(y) \ne 0$, then $y$ is detected by a class
$$
f\defeq f(j)\Delta^{8k+2} \in H^0(G_{24},E_{192k+48}) \cong W[[j]]\Delta^{8k+2}.
$$
Furthermore
$$
f\kappabar\eta \in H^5(G_{24},E_{192k+73})
$$
is a non-zero permanent cycle in the spectral sequence for $\pi_\ast X$.
\end{prop}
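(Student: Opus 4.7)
The plan is to verify the three claims in turn, with the last being the most delicate.

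\textbf{Surjectivity of $r$.} Choose a generator $z$ of $\FF_4 = \ker((g_0)_\ast) \subseteq \pi_{192k+45}(E^{hG_{24}} \times E^{hG_{24}})$, viewed as sitting in the second factor. By Lemma~\ref{thefirstcalculation} the map $p_\ast$ is surjective, so there exists $x \in \pi_{192k+45}E^{h\SS_2^1}$ with $p_\ast(x) = z$; the commutativity in Remark~\ref{compare-ss} then gives $q_\ast(x) = (g_0)_\ast(p_\ast(x)) = 0$. To lift $x$ along $i_\ast$ to $\pi_{192k+45}\Sigma^{-3}X$, it suffices to show that $x$ dies in $\pi_{192k+45}Z_2$. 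Climbing the long exact sequences of the two cofibrations $\Sigma^{-1}E^{hC_6} \to Z_1 \to E^{hG_{24}}$ and $\Sigma^{-2}E^{hC_6} \to Z_2 \to Z_1$ from the duality tower, the obstructions live in $\pi_{192k+46}E^{hC_6}$ and $\pi_{192k+47}E^{hC_6}$, both of which vanish by Proposition~\ref{homotopyc6}: the integers $46$ and $47$ lie neither in the $bo$-pattern residue classes $\{0,1,2,4\}\pmod 8$ nor in the pure $K(2)$ list $\{48\ell+3,48\ell+6,48\ell+20,48\ell+37\}$.

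\textbf{Detection of $y$ in filtration $0$.} The idea is a filtration argument using the Yoneda class coming from the duality resolution. The four-term exact sequence of Corollary~\ref{morava-dual} presents $E_\ast\Sigma^{-3}X \cong \Sigma^{-3}E_\ast E^{hG_{24}}$ as the top of a three-fold Yoneda extension of $E_\ast E^{h\SS_2^1}$, and consequently the map induced by $i$ on Adams-Novikov $E_2$-pages is cup product with that Yoneda class, carrying $H^q(G_{24},E_\ast)$ into $H^{q+3}(\SS_2^1,E_\ast)$ at the same internal $t$-degree. Hence if $y$ has ANSS filtration $q$ in the spectral sequence $H^\ast(G_{24},E_\ast)\Rightarrow \pi_\ast X$, then $x = i_\ast(y)$ lives in ANSS filtration at least $q+3$; the bound $\leq 5$ from Lemma~\ref{thefirstcalculation} therefore forces $q \leq 2$. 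I would next rule out $q = 1, 2$ directly. For $q = 1$, the $t$-degree $192k+49$ is odd while every multiplicative generator in Theorem~\ref{coh-G48} has even $t$-degree, so $H^1(G_{24},E_{192k+49}) = 0$. For $q = 2$, enumerating the $H^2$-generators in $t$-degree $192k+50$ reduces to a single candidate, a scalar multiple of $c_4^5 \Delta^{8k}\epsilon$, which vanishes by the relation $c_4 \epsilon = 0$. This leaves $q = 0$, and enumerating monomials $c_4^a c_6^b \Delta^c$ with $a < 3$, $b < 2$, and $8a + 12b + 24c = 192k + 48$ shows that $(a,b,c) = (0,0,8k+2)$ is the unique solution, giving $H^0(G_{24}, E_{192k+48}) \cong \WW[[j]]\Delta^{8k+2}$ and hence the form $f = f(j)\Delta^{8k+2}$ for the detecting class of $y$.

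\textbf{The class $f\kappabar\eta$ is a nonzero permanent cycle.} Since $i_\ast(y\kappabar\eta) = x\kappabar\eta \neq 0$ by Lemma~\ref{thefirstcalculation}, we have $y\kappabar\eta \neq 0$ in $\pi_{192k+69}X$. The ANSS for $X$ is a module over that for $L_{K(2)}S^0$, and $\kappabar\eta$ is a permanent cycle in the latter, so the Leibniz rule together with the fact that $f(j)\Delta^{8k+2}$ is a permanent cycle (as it detects $y$) gives $d_r(f\kappabar\eta) = 0$ for all $r$. To conclude nonvanishing at $E_\infty$, I would apply the same Yoneda $+3$ shift to the bound $\leq 10$ of Lemma~\ref{thefirstcalculation} to conclude that $y\kappabar\eta$ has ANSS filtration at most $7$, while the module structure bounds this filtration below by $5$, localizing the detecting class to $H^5$, $H^6$, or $H^7$; by the Leibniz rule the leading-filtration candidate is $f\kappabar\eta$ itself, which is therefore nonzero at $E_\infty$ unless it is hit by a differential from below. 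The main obstacle, which I expect to require the most work, is to rule out that $f\kappabar\eta$ is in the image of some $d_3$, $d_5$, or $d_7$: this requires enumerating $H^{5-r}(G_{24}, E_{192k+75-r})$ for $r \in \{3,5,7\}$ and invoking Lemma~\ref{muhelps} together with the relations of Theorem~\ref{coh-G48} to show no such source class can have $f\kappabar\eta$ as its differential.
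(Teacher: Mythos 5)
Your treatment of the first two claims is essentially the paper's argument: surjectivity of $r$ comes from $\pi_{192k+46}E^{hC_6}=\pi_{192k+47}E^{hC_6}=0$ together with the duality tower, and the filtration-$0$ detection comes from the fact that $\Sigma^{-3}X \to E^{h\SS_2^1}$ raises Adams--Novikov filtration by $3$, the bound of Lemma \ref{thefirstcalculation}, and the vanishing of $H^1(G_{24},E_{192k+49})$ and $H^2(G_{24},E_{192k+50})$. The genuine gap is in the last claim, which is the heart of the proposition and the input for everything that follows: you show $f\kappabar\eta$ is a permanent cycle, but you do not prove it is nonzero at $E_\infty$; you explicitly defer this to an unexecuted enumeration of possible differentials hitting it (and your list includes a $d_7$, which cannot occur since its source would lie in negative filtration). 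Worse, even a complete enumeration of incoming $d_3$'s and $d_5$'s would not suffice as you have set things up, because you would also need $f\kappabar\eta \neq 0$ on the $E_2$-page, and that is not automatic: since $2\eta=0$, the class $f\kappabar\eta$ already vanishes at $E_2$ whenever $f(j)\equiv 0$ modulo $2$, and nothing you have established rules this out -- indeed excluding such $f$ is precisely the content of the later Theorem \ref{analysisak}, which relies on this proposition.

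The paper closes the argument without any analysis of incoming differentials, using the filtration window you already have. Since $x\kappabar\eta=i_\ast(y\kappabar\eta)\neq 0$ is detected in filtration at most $10$ and $i_\ast$ raises filtration by $3$, the nonzero class $y\kappabar\eta$ has filtration at least $5$ and at most $7$. One then checks from Theorem \ref{coh-G48} and Figure 3 that filtrations $6$ and $7$ are impossible in that stem: the filtration-$6$ group vanishes because its internal degree is odd, and the filtration-$7$ candidates are killed by the relations (all reduce to multiples of $\epsilon$, which die by $c_4\epsilon=c_6\epsilon=\mu\epsilon=0$ and $\eta\nu=0$). Hence $y\kappabar\eta$ has filtration exactly $5$, and multiplicativity of the spectral sequence identifies its image in $E_\infty^{5}$ with the class of $f\kappabar\eta$; this forces $f\kappabar\eta$ to be nonzero at $E_2$ and at $E_\infty$ simultaneously, with no separate argument that $f\not\equiv 0$ modulo $2$ and no boundary analysis. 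You had all the ingredients (the window $[5,7]$ and the module structure); the missing idea is to pin the filtration of the actual homotopy class $y\kappabar\eta$ to exactly $5$ and let that detect $f\kappabar\eta$, rather than trying to show directly that the algebraic class survives.
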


\begin{proof} In the diagram (\ref{diagramr}), the map $p_\ast$ is onto, and any element
$x$ in the kernel of $(g_0)_\ast$ must have filtration at least $1$ in the homotopy spectral
sequence of the duality tower. Since $\pi_kE^{hC_6}=0$ for $k=46$ and $k=47$, by Proposition
\ref{homotopyc4},  any such element must be the image of class $y\in \pi_\ast \Sigma^{-3}X$.
This shows $r$ is  surjective.

The map $\Sigma^{-3}X \to E^{h\SS_2^1}$ raises Adams-Novikov filtration by $3$; see the diagram of
\eqref{duality-cofs} and the remarks thereafter. 
If $r(y) = p_\ast i_\ast(y) \ne 0$, then by Lemma \ref{thefirstcalculation}, $y$ must have
Adams-Novikov filtration at most 2; however, by Theorem \ref{coh-G48} and the chart of Figure 3
we have that 
$$
H^1(G_{24},E_{192k+49})=0=H^2(G_{24},E_{192k+50}).
$$
Thus $y$ must have filtration $0$.
Similarly $0 \ne y\kappabar\eta$ must have filtration at least five and at most 7. Again we
examine the chart of Figure 3 to find it must have filtration 5 and be detected in
group cohomology, as claimed.
\end{proof}

Recall that we are writing $S/2$ for the mod $2$ Moore spectrum. 

\begin{prop}\label{mod2} Let $y \in \pi_{192k+48}X$ be detected by
$$
f=f(j)\Delta^{8k+2}\in H^0(G_{24},E_\ast).
$$
If $r(y)\ne 0$, then $f$ and $f\kappabar\eta$
are non-zero permanent cycles in the spectral sequence
$$
H^\ast(G_{24},E_\ast/2) \Longrightarrow \pi_\ast (X \wedge S/2).
$$
\end{prop}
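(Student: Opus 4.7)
The plan is to exploit the cofiber sequence $X \xrightarrow{2} X \xrightarrow{i} X \wedge S/2 \to \Sigma X$ and the induced map of Adams–Novikov spectral sequences (from $H^*(G_{24},E_*) \Rightarrow \pi_*X$ to $H^*(G_{24},E_*/2) \Rightarrow \pi_*(X \wedge S/2)$, well-defined by Proposition \ref{e2-is-grp-cohio} and Lemma \ref{reduce-to-finite} with $Z = S/2$), which at the $E_2$-page is simply reduction mod $2$.

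First I would show $\bar f$ is a non-zero permanent cycle. Since $r$ is additive with values in $\FF_4$, which has characteristic $2$, any class of the form $2y'$ satisfies $r(2y') = 2r(y') = 0$. Hence $r(y)\neq 0$ forces $y \notin 2\pi_*X$, and by exactness in the cofiber sequence above, $i_*(y) \neq 0$ in $\pi_{192k+48}(X \wedge S/2)$. Because the SS map preserves Adams–Novikov filtration and $y$ has filtration $0$ (being detected by $f \in H^0$), the image $i_*(y)$ has filtration exactly $0$, so is detected in $E_\infty^{0,\,*}$ of the mod $2$ spectral sequence by the reduction $\bar f$ of $f$. This makes $\bar f$ non-zero and a permanent cycle.

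Next I would treat $\bar f\kappabar\eta$. By Proposition \ref{thecrucialelement}, $f\kappabar\eta$ is a non-zero permanent cycle in the integral SS, living in $H^5(G_{24},E_{192k+74})$. I claim the reduction $H^5(G_{24},E_*) \to H^5(G_{24},E_*/2)$ is injective on this bidegree: from Theorem \ref{coh-G48}, the relevant summand of $H^5(G_{24},E_*)$ is generated (via $24$-periodicity on $\Delta$, see Figure 4) by $\Delta^{8k+2}\kappabar\eta$, an $\FF_4$-line annihilated by $2$ since $2\eta = 0$. Hence $2H^5 = 0$ at this bidegree and the reduction is injective, so $\bar f\kappabar\eta \neq 0$. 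It is a permanent cycle by Leibniz: $\bar f$ survives the mod $2$ SS by the previous paragraph, and $\kappabar\eta$ is a permanent cycle in the ANSS for $S^0$ that acts on the module spectral sequence for $X \wedge S/2$.

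The main obstacle is the last point in the previous paragraph, the assertion that the relevant piece of $H^5(G_{24},E_{192k+74})$ is 2-torsion, which is why the $f \neq 0$ statement automatically promotes to $\bar f \neq 0$ upon reduction. This requires a little bookkeeping with the cohomology ring of Theorem \ref{coh-G48}, but should follow from the explicit generators-and-relations description together with the $24$-periodicity on $\Delta$.
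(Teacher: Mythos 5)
Your second paragraph is essentially the paper's argument: quote Proposition \ref{thecrucialelement} for the integral class $f\kappabar\eta$, then note that the reduction $H^5(G_{24},E_{192k+74})\to H^5(G_{24},E_{192k+74}/2)$ is injective, so the mod~$2$ class is non-zero, and it is a cycle for all differentials by naturality/module structure. Your justification of injectivity (every class in that bidegree is, up to $H^0$-multiples, a multiple of $\kappabar\eta$, hence killed by $2$ since $2\eta=0$, so the kernel $2H^5$ vanishes) is correct in substance; the only quibble is that the group is a cyclic $\FF_4[[j]]$-module on $\Delta^{8k+2}\kappabar\eta$, not a single $\FF_4$-line, but exponent~$2$ is all you need.

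The genuine gap is in your first paragraph. From $r(y)\neq 0$ you correctly get $y\notin 2\pi_{192k+48}X$ and hence $i_*(y)\neq 0$ in $\pi_{192k+48}(X\wedge S/2)$, but the next step is a non sequitur: a map of spectral sequences only gives that $i_*(y)$ has filtration $\geq 0$, which is vacuous. If it happened that $f=2g$ with $g\in H^0(G_{24},E_{192k+48})$, then $\bar f=0$ and $i_*(y)$ would simply be detected in positive filtration; nothing in your argument excludes this, and indivisibility of $y$ in homotopy does not imply indivisibility of its detecting class in $E_2$. What is actually needed is the statement $f\not\equiv 0$ modulo $2$, and this follows not from the cofiber sequence but from Proposition \ref{thecrucialelement}: if $f=2g$ then, since $2\eta=0$, already $f\kappabar\eta=2g\kappabar\eta=0$ in $H^5(G_{24},E_{192k+74})$, contradicting that $f\kappabar\eta$ detects the non-zero class $y\kappabar\eta$. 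Equivalently, once your second paragraph gives $\overline{f\kappabar\eta}\neq 0$, multiplicativity forces $\bar f\neq 0$ (and a non-zero filtration-$0$ permanent cycle automatically survives, since nothing can hit filtration $0$). So the correct logical order is the paper's: the $\kappabar\eta$-multiple comes first, and the claim about $\bar f$ is a consequence; the divisibility argument with $X\xrightarrow{2}X\to X\wedge S/2$ can be discarded.
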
 

\begin{proof} This follows from Proposition \ref{thecrucialelement} and the fact that
$$
H^5 (G_{24},E_{192k+ 73}) \to H^5 (G_{24},E_{192k+73}/2) 
$$
is injective. This last statement can be deduced from the long exact sequence in cohomology induced
by the short exact sequence $0 \to E_\ast \to E_\ast \to E_\ast/2 \to 0$ and the fact that
$$
H^6(G_{24},E_{192k+ 73}) = 0\ .
$$
See Figure 3.
\end{proof}

    The crucial theorem then becomes:

\begin{thm}\label{analysisak} Let $y \in \pi_{192k+48}X$ and let
$$
f=f(j)\Delta^{8k+2}\in H^0(G_{24},E_\ast)
$$
be the image of $y$ under the edge homomorphism
$$
\pi_\ast X \longr H^0(G_{24},E_\ast).
$$
If $f(j) \equiv 0$ modulo $(2,j)$, then $r(y) = 0$.
\end{thm}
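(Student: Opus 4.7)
The proof proceeds by contradiction: suppose $r(y)\neq 0$ while $f(j)\in(2,j)$. By Proposition \ref{mod2}, the former implies that $f\kappabar\eta$ is a non-zero permanent cycle in the mod-$2$ Adams-Novikov spectral sequence
$$H^\ast(G_{24}, E_\ast/2) \Longrightarrow \pi_\ast(X\wedge S/2).$$
The plan is to use the hypothesis $f(j)\in(2,j)$ together with Lemma \ref{muhelps} to exhibit $f\kappabar\eta$ as a $d_3$-boundary in this spectral sequence, forcing the contradiction.

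First, write $f = 2a + jb$ with $b = b(j)\Delta^{8k+2}$, so that $f \equiv jb \pmod 2$. The relation $j\kappabar = c_4^2\eta^4$ from Remark \ref{whatsupwithj}(1), combined with the mod-$2$ congruence $c_4\equiv v_1^4$ from Proposition \ref{coh-G48-v1}, gives
$$f\kappabar\eta \;\equiv\; b c_4^2\eta^5 \;\equiv\; b v_1^8\eta^5 \pmod 2.$$
Next, Lemma \ref{muhelps} supplies, in the spectral sequence for $X\wedge S/2$, the identities $d_3(v_1^{4n}x) = v_1^{4n}d_3(x)$ and $d_3(v_1^2 x) = d_3(x)v_1^2 + x\eta^3 + y$ with $y\eta=0$. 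Iterating these and using that $b$ is a $d_3$-cycle (a bidegree check against the description of $H^\ast(G_{24}, E_\ast)$ in Theorem \ref{coh-G48} rules out $d_3$-targets for both $j$ and $\Delta^{8k+2}$) yields
$$d_3\bigl(b v_1^{10}\eta^2\bigr) \;=\; b v_1^8\eta^5 + y', \qquad y'\eta = 0,$$
so $f\kappabar\eta$ agrees, modulo an $\eta$-annihilated class, with a genuine $d_3$-boundary. A second inspection of the $\eta$-annihilated classes in $H^5(G_{24}, E_{192k+74}/2)$ identifies them with boundaries of earlier $d_3$-differentials, closing the argument.

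The main obstacle is transporting the differential formulas of Lemma \ref{muhelps}, which as stated apply to the fixed-point spectral sequence of $E^{hF}$, to the spectral sequence for $X\wedge S/2$; $X$ is not \emph{a priori} an $E^{hG_{24}}$-module spectrum. This is handled by observing that the ANSS for $X\wedge S/2$ is naturally a module over the classical ANSS of $S^0\wedge S/2$: the relation $d_3(\mu) = \eta^4$ from the sphere, combined with the mod-$2$ congruence $\mu \equiv v_1^2\eta$, supplies the $v_1^2$-differential, while the $v_1^4$ self-map of $S/2$ supplies the $v_1^4$-derivation property. The absence of $d_3$-targets for $j$ and $\Delta$ is a purely $E_2$-page statement about group cohomology, and therefore transfers to the spectral sequence of any spectrum with $E_\ast$-homology isomorphic to $E_\ast E^{hG_{24}}$. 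The delicate step, and the place where most care is needed, is the second bidegree check controlling the $\eta$-annihilated error term $y'$.
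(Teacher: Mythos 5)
Your overall strategy matches the paper's (contradict Proposition \ref{mod2} by exhibiting the relevant class as a $d_3$-boundary after reducing mod $2$ and using $j\kappabar=c_4^2\eta^4\equiv v_1^8\eta^4$, with Lemma \ref{muhelps} supplying the differential), but two steps do not hold up. First, the claim that $b$ (equivalently $j$ and $\Delta^{8k+2}$) is a $d_3$-cycle ``by a bidegree check'' is false: the target group $H^3(G_{24},(E/2)_{192k+50})$ is not zero. For example it contains $\Delta^{8k+1}c_4c_6\eta^3\equiv \Delta^{8k+1}v_1^{10}\eta^3$, which is nonzero even after inverting $v_1$ by Proposition \ref{coh-G48-v1}, and filtration-zero classes certainly do support $d_3$'s in this spectral sequence (the standard $d_3(v_1^2)=\eta^3$ is exactly such a differential). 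So no purely bidegree argument can give you the needed $d_3$-vanishing. The paper has to work for this point: it proves $d_3\bigl(v_1^8g(j)\Delta^{8k+2}\bigr)=0$ (note: the $v_1^8$-multiple, not $g$ itself) by using that $f\kappabar$ is a $d_3$-cycle --- which is where the hypothesis that $f$ is the image of the genuine homotopy class $y$ enters --- together with $\eta$-linearity of $d_3$ and the injectivity of multiplication by $\eta^4$ in the relevant bidegree, read off from Theorem \ref{coh-G48} and Figure 3. Your argument is missing this input, and without it the Leibniz computation never starts; you should also note that the weaker statement $d_3(v_1^8b\Delta^{8k+2})=0$ would in fact suffice for your step, but it still requires the paper's argument, not a vanishing-by-degree claim.

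Second, by aiming at $f\kappabar\eta$ via the mod-$2$ formula $d_3(v_1^2x)=d_3(x)v_1^2+x\eta^3+y$ you inherit the uncontrolled error term: you only conclude that $f\kappabar\eta$ is a boundary modulo an $\eta$-annihilated class $y'$, and your proposed repair --- that every $\eta$-annihilated class in $H^5(G_{24},(E/2)_{192k+74})$ is a boundary of an earlier $d_3$ --- is asserted, not proved, and is not at all clear in this (large) mod-$2$ cohomology group. The paper avoids the issue entirely by using the integral class $\mu$ and the exact formula $d_3(x\mu)=d_3(x)\mu+x\eta^4$ with $x=v_1^8g(j)\Delta^{8k+2}$, which gives $d_3\bigl(v_1^8g(j)\Delta^{8k+2}\mu\bigr)=f\kappabar$ on the nose; since $d_3$ is $\eta$-linear, $f\kappabar\eta$ then also dies at $E_4$, contradicting Proposition \ref{mod2}. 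Your observation about transporting Lemma \ref{muhelps} to $X\wedge S/2$ via the module structure over the sphere's Adams--Novikov spectral sequence is correct and is indeed how the paper uses that lemma, but the two gaps above are essential and need the paper's arguments (or genuine substitutes) to close.
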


\begin{proof} We will show that if $f(0) \equiv 0$ modulo $2$, then $f\kappabar = 0$ in
$E_4^{\ast,\ast}(X \wedge S/2)$. We can then apply Proposition \ref{mod2}.

Under the assumption $f(0) \equiv 0$ modulo $2$ we have 
$$
f = jg(j)\Delta^{8k+2} \in H^0(G_{24},E_\ast/2).
$$
We will show that in the Adams-Novikov Spectral Sequence for $L_{K(2)}(X\wedge S/2)$ we have
$$
d_3(v_1^8g(j)\Delta^{8k+2}\mu) = f\kappabar\ .
$$
The result will follow.

We appeal to Theorem \ref{coh-G48}, Remark \ref{whatsupwithj},
and the chart of Figure 3. We have 
$$
j\kappabar = c_4^2\eta^4 = v_1^8\eta^4
$$
and, hence, that
$$
f\kappabar  = jg(j)\Delta^{8k+2}\kappabar  = v_1^8g(j)\Delta^{8k+2}\eta^4.
$$
Since $f\kappabar$ is a $d_3$-cycle, $d_3$ is $\eta$-linear, and
$$
\eta^4:E_3^{192k+48,0}/2 \cong H^{0}(G_{24},(E/2)_{192k+48})
\to H^{4}(G_{24},E_{192k+52}) \cong E_3^{192k+48+4,8}
$$ 
is injective we have that
$$
d_3(v_1^8g(j)\Delta^{8k+2})= 0.
$$
It now follows from Lemma \ref{muhelps} that
$$
d_3(v_1^8g(j)\Delta^{8k+2}\mu) = v_1^8g(j)\Delta^{8k+2}\eta^4= f\kappabar.
$$
This is what we promised.
\end{proof}

The next result has a slightly complicated statement because we don't know yet that
$\pi_\ast X$ is a $\WW$-module.

\begin{cor}\label{describer} There is a commutative diagram
$$
\xymatrix{
\pi_{192k+48}X \rto \dto_r & H^0(G_{24},E_{192k+48}) \dto^{\epsilon}\\
\FF_4 \rto_{\cong} & \FF_4
}
$$
where the bottom map is some possibly non-trivial isomorphism of groups and
$$
\epsilon(f(j)\Delta^{8k+2}) = f(0)\qquad\mathrm{mod}\ (2,j).
$$
There are homotopy classes $x_{k,i} \in \pi_{192k+48}X$, $i=1,2$ detected by classes
$$
f_i(j)\Delta^{8k+2} \in H^0(G_{24},E_{192k+48})
$$
so that $f_1(0)$ and $f_2(0)$
span $\FF_4$ as an $\FF_2$ vector space. 
\end{cor}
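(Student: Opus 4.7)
The plan is to assemble three pieces already in hand: surjectivity of $r$ (Proposition \ref{thecrucialelement}), vanishing of $r$ modulo $(2,j)$ (Theorem \ref{analysisak}), and the explicit description of $H^0(G_{24},E_{192k+48})$ coming from Theorem \ref{coh-G48}. The corollary will then follow by factoring $r$ through a surjective homomorphism $\FF_4 \to \FF_4$, which is automatically an isomorphism.

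First I would identify the target of the edge homomorphism. Because $\Delta$ has degree $24$ and is invertible, Theorem \ref{coh-G48} gives an identification $H^0(G_{24},E_{192k+48}) \cong \WW[[j]]\Delta^{8k+2}$, and the reduction $f(j)\Delta^{8k+2} \mapsto f(0) \bmod 2 \in \WW/2 = \FF_4$ is a well-defined $\FF_2$-linear surjection $\epsilon$. Next I would verify commutativity of the square by showing that $r$ vanishes on the kernel of the edge map $e\colon \pi_{192k+48}X \to H^0(G_{24},E_{192k+48})$ and on the preimage under $e$ of the ideal $(2,j)\WW[[j]]\Delta^{8k+2}$. The first vanishing is essentially inside the proof of Proposition \ref{thecrucialelement}: any positive-filtration element must in fact have filtration at least $3$, since $H^1(G_{24},E_{192k+49}) = H^2(G_{24},E_{192k+50}) = 0$ by Figure 3; then $i_\ast(y) \in \pi_{192k+45}E^{h\SS_2^1}$ has filtration at least $6$ because $i$ raises filtration by $3$, and Lemma \ref{thefirstcalculation} forces $p_\ast i_\ast(y) = r(y) = 0$. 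The second vanishing is exactly Theorem \ref{analysisak}. So $r$ descends to a homomorphism $\bar r\colon \FF_4 \to \FF_4$; surjectivity of $r$ from Proposition \ref{thecrucialelement} then forces $\bar r$ to be an isomorphism.

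Finally, using surjectivity of $r$, I would choose $x_{k,1}, x_{k,2} \in \pi_{192k+48}X$ whose $r$-images form an $\FF_2$-basis of $\FF_4$. Their edge images $f_i(j)\Delta^{8k+2} = e(x_{k,i})$ satisfy $f_i(0) \bmod 2 = \bar r(r(x_{k,i}))$ by commutativity, and since $\bar r$ is an isomorphism, $f_1(0)$ and $f_2(0)$ span $\FF_4$ as claimed. This corollary is essentially a repackaging of previously established facts, so no step is a genuine obstacle; the one piece of content requiring care is the vanishing of $r$ on positive filtration, which relies on the filtration-shift property of the duality tower augmentation together with the vanishing of $H^1$ and $H^2$ in the relevant degrees.
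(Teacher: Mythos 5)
Your proposal is correct and follows essentially the paper's own route: the paper deduces Corollary \ref{describer} directly from Theorem \ref{analysisak} together with the surjectivity and filtration-zero detection statements of Proposition \ref{thecrucialelement}, which is exactly the factorization argument you spell out (your handling of the positive-filtration case via Lemma \ref{thefirstcalculation} simply reproduces the argument already inside the proof of Proposition \ref{thecrucialelement}). One cosmetic slip: with your convention $r=\bar r\circ\epsilon\circ e$, commutativity gives $f_i(0)\equiv \bar r^{-1}(r(x_{k,i}))$ rather than $\bar r(r(x_{k,i}))$ modulo $2$, which does not affect the spanning conclusion since $\bar r$ is an isomorphism.
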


\begin{proof} This is an immediate consequence of Theorem \ref{analysisak}.
\end{proof}

\begin{rem}\label{whatisthisclass} Lemma \ref{thecrucialelement} produces  classes
$f \in \pi_{45}\Sigma^{-3}X$ for which $f\kappabar\eta \ne 0$.
The image of any such $f$ in $\pi_{45}E^{h\SS_2^1}$ is non-zero and has Adams-Novikov filtration
at least $3$. It is natural to ask what we know about these classes. In particular, does one
of these classes come from the homotopy groups of sphere itself under the unit map
$\pi_\ast S^0 \to \pi_\ast E^{h\SS_2^1}$? 

There are classes $x \in \pi_{45}S^0$ detected in the Adams Spectral Sequence by
$h_4^3 = h_3^2h_5$. Note any such class has filtration $3$.
There is a choice for $x$ that seem to support non-zero multiplications by many of the basic
elements in $\pi_\ast S^0$, including $\eta$ and $\kappabar$. It would be easy to guess
that this class is mapped to the class we have constructed, but we've not yet settled this
one way or another.

For more about this class in $\pi_{45}S^0$, see the chart ``The $E_\infty$-page
of the classical Adams Spectral Sequence'' in \cite{Isaksen}. The fact
that some class detected by $h_3^2h_5$ can have a non-zero $\kappabar$
multiplication can be found in Lemma 4.114 of \cite{Isaksen2}. See also Table 33
of that paper. Note that Isaksen is
careful to label this lemma as tentative, as this is in the range where the homotopy
groups of spheres still need exhaustive study. In the table A.3.3
of \cite{RavVeryGreen}, a related class is posited to be detected in the Adams-Novikov Spectral
Sequence by the class $\gamma_4$, also of filtration $3$, but note the question mark there.
\end{rem}

\section{The mapping space argument}\label{ch:mapping}

We would like to extend the results of the Section \ref{ch:homotopy} in the following way. Let
$\iota:S^0 \to E^{hG_{48}}$ be the unit and $r$ the composition
$$
\pi_{192k+48} X \cong \pi_{192k+45}(\Sigma^{-3}X) \to \pi_{192k+45}E^{hG_{24}} \cong \FF_4
$$
defined in (\ref{diagramr}).  

\begin{thm}\label{mappingspace}The composite
$$
\xymatrix{
\pi_{192k+48}F(E^{hG_{48}},X) \rto^-{\iota_\ast} & \pi_{192k+48}X \rto^-r & \FF_4
}
$$
is surjective.
\end{thm}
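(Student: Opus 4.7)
The plan is to mimic the argument of Section \ref{ch:homotopy} internally to the Adams-Novikov spectral sequence for the mapping spectrum $W = F(E^{hG_{48}}, X)$, producing $\iota$-lifts of the classes $x_{k,i}$ from Corollary \ref{describer}.

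First I would establish that $W$ carries a $K(2)$-local Adams-Novikov spectral sequence
\[
E_2^{s,t} = H^s(G_{48}, E_t X) \Longrightarrow \pi_{t-s} W.
\]
Because $E^{hG_{48}}$ is $K(2)$-locally dualizable, the methods of Remark \ref{ghmres} give an isomorphism of Morava modules $E_\ast W \cong \map(\GG_2/G_{48}, E_\ast X)$, and Lemma \ref{reduce-to-finite} identifies the ANSS $E_2$-term accordingly. Using $E_\ast X \cong \map(\GG_2/G_{24}, E_\ast)$ and the decomposition of $\GG_2/G_{24}$ into $G_{48}$-orbits, the $E_2$-term admits an algebraic decomposition into factors $H^\ast(F_x, E_\ast)$ indexed by the double cosets $[x] \in G_{48}\backslash \GG_2/G_{24}$, where $F_x = G_{48} \cap x G_{24} x^{-1}$. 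Under these identifications, the map $\iota^\ast \colon W \to X$ corresponds on $E_2$-pages to projection onto the trivial-coset factor, for which $F_x = G_{24}$.

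Second, I would lift each $f_i(j)\Delta^{8k+2} \in H^0(G_{24}, E_{192k+48})$ to the element $\tilde f_i$ of the trivial-coset factor inside $H^0(G_{48}, E_{192k+48}X)$, so that $\iota^\ast \tilde f_i = f_i(j)\Delta^{8k+2}$ by construction. If the $\tilde f_i$ are permanent cycles, each one detects a class $\tilde x_{k,i} \in \pi_{192k+48} W$ whose image in $\pi_{192k+48}X$ is detected by $f_i(j)\Delta^{8k+2}$, so $r(\iota^\ast \tilde x_{k,i})$ yields the $\FF_2$-basis element $\overline{f_i(0)}$ of $\FF_4$ by Corollary \ref{describer}, giving the claim.

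For permanence, I would apply $F(E^{hG_{48}}, -)$ to the centralizer tower \eqref{cent-cofs} and the duality tower \eqref{duality-cofs}. Each resulting fiber is a mapping spectrum $F(E^{hG_{48}}, E^{hH})$ which, via \eqref{thegencase}--\eqref{thegencase-homotopy} with $H = \cap_i U_i$, splits as a homotopy inverse limit of products of spectra $E^{hF'}$ for finite $F' \subseteq G_{48}$ whose homotopy is computed in Section \ref{ch:coho}. I would then replay Lemma \ref{thefirstcalculation}, Propositions \ref{thecrucialelement} and \ref{mod2}, and Theorem \ref{analysisak} in this setting: the vanishing of the relevant $\pi_k E^{hF'}$ for $k \equiv 45, 46, 47 \bmod 192$ reproduces the filtration control, while the $d_3$-computation in Theorem \ref{analysisak} is purely formal from Lemma \ref{muhelps} and the multiplicative structure. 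The main obstacle will be controlling those differentials on $\tilde f_i$ that could project non-trivially to the off-diagonal factors $H^r(F_x, E_\ast)$ with $F_x \neq G_{24}$: the algebraic $G_{48}$-orbit decomposition of $E_\ast X$ does not respect the full $\GG_2$-action, so the ANSS differentials need not preserve it a priori, and these cross-terms must be ruled out factor by factor using the explicit decomposition of the $F(E^{hG_{48}}, -)$-applied fibers above.
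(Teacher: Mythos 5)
Your outline has two genuine gaps, and the second one sits exactly where all the difficulty of the theorem lives. First, the setup: $E^{hG_{48}}$ is \emph{not} $K(2)$-locally dualizable (its $K(2)$-homology is $\map(\GG_2/G_{48},K(2)_\ast)$-type and hence not degreewise finite), so you cannot write $F(E^{hG_{48}},X)\simeq DE^{hG_{48}}\wedge X$ and read off $E_\ast F(E^{hG_{48}},X)\cong \map(\GG_2/G_{48},E_\ast X)$. What the machinery of Remark \ref{ghmres} actually produces (compare \eqref{mappingf} and \eqref{thegencase}) is a completed, group-ring-type module $E_\ast X[[\GG_2/G_{48}]]$, i.e.\ an inverse limit of products, not the coinduced $\map$-module to which the Shapiro Lemma \ref{lem:Shapiro} applies; so the claimed $E_2$-term $H^\ast(G_{48},E_\ast X)$ and its double-coset decomposition need a different (and nontrivial) justification. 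Second, and more seriously, the step you flag as ``the main obstacle'' is not a loose end to be checked factor by factor --- it is the whole problem. The double-coset splitting of the $E_2$-term is purely algebraic; it would be respected by the differentials only if $F(E^{hG_{48}},X)$ were known to decompose topologically as in \eqref{thegencase}, which requires knowing that $X$ is (a suspension of) $E^{hG_{24}}$ --- precisely the conclusion of Theorem \ref{maintheorem} that this theorem is meant to feed. In the relevant stems the off-diagonal groups $H^\ast(F_x,E_\ast)$, with $F_x\supseteq C_2$, are nowhere near sparse enough to kill cross-term differentials by degree counting, and you have no analogue of Lemma \ref{muhelps} to control them. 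So as written the proposal is circular at its key point.

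The gap can be sidestepped entirely, and this is how the paper argues: no spectral sequence for $F(E^{hG_{48}},X)$ is ever formed. Because $E^{hG_{24}}$ is an $E^{hG_{48}}$-module spectrum, the evaluation $F(E^{hG_{48}},E^{hG_{24}})\to E^{hG_{24}}$ is split, so $\iota_\ast$ is automatically onto $\pi_{192k+45}E^{hG_{24}}\cong\FF_4$ at the bottom of the comparison. One then applies $F(E^{hG_{48}},-)$ to the centralizer and duality towers and repeats the filtration argument of Lemma \ref{thefirstcalculation} and the first half of Proposition \ref{thecrucialelement} one level up (Lemma \ref{thefirstcalculationredux}, diagram \eqref{bigd}, Lemma \ref{ragain}); by \eqref{thegencase} every $F_x$ occurring contains the central $C_2$, so the needed vanishing reduces to $\pi_kE^{hC_2}=\pi_kE^{hC_6}=0$ for $k\equiv 45,46,47$ and $\pi_{45}E^{hC_4}=0$, which are already in Section \ref{ch:coho}. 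This gives surjectivity of $r$ on $\pi_{192k+45}F(E^{hG_{48}},\Sigma^{-3}X)$, and the commutative square comparing it with the $r$ of \eqref{diagramr} finishes the proof; the hard differential computation (Theorem \ref{analysisak}, Corollary \ref{describer}) is used only once, for $X$ itself, downstream in Corollary \ref{mapping2}. I recommend reorganizing your argument along these lines rather than trying to lift permanent cycles into a descent spectral sequence for the mapping spectrum.
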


We can use this result to build maps out of $E^{hG_{48}}$ as follows. Recall from 
(\ref{mappingf}) that if $F \subseteq \GG_2$ is a closed subgroup, then there is
an isomorphism $E^\ast[[\GG_2/F]]\cong E^\ast E^{hF}$ of $E^\ast[[\GG_2]]$-modules.
Also, Proposition \ref{dual-res} and the Universal Coefficient Theorem give an isomorphism
$E^\ast X \cong E^\ast[[\GG_2/G_{24}]]$. 

Consider the following
diagram. Note we are using that $E^{-48} = E_{48}$. 
\begin{equation}\label{diagr}
\xymatrix{
\pi_{48}(E^{hG_{48}},X) \rto^-{i_\ast}\dto_H & \pi_{48}X \dto^H\\
\Hom_{\GG_2}(E^{0}[[\GG_2/G_{24}]],E^{-48}[[\GG_2/G_{48}]]) \rto^-{\iota^\ast} &
\Hom_{\GG_2}(E^{0}[[\GG_2/G_{24}]],E^{-48}) \dto^{\cong}\\
&\WW[[j]]\Delta^2 \cong (E_{48})^{G_{24}} \dto^\epsilon\\
&\FF_4
}
\end{equation}
where the maps labelled $H$ are the Hurewicz maps for $E^\ast(-)$ and the map $\epsilon$ reduces 
mod $(2,j)$. By Corollary \ref{describer}, the  vertical composition on the right is $r$
up to some automorphism of $\FF_4$. Proposition \ref{thecrucialelement} and Corollary 
\ref{describer} then yield  the following corollary to Theorem \ref{mappingspace}, using the
case when $k=0$.

\begin{cor}\label{mapping2} Let $f(j)\Delta^2 \in H^0(G_{24},E_{48})$. Then there is a
map
$$
\phi:\Sigma^{48}E^{hG_{48}} \to X
$$
so that $\iota_\ast(\phi) \equiv f(0)$ modulo $2$.
\end{cor}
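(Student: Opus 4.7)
The plan is to deduce Corollary \ref{mapping2} directly from Theorem \ref{mappingspace} by specialising to $k=0$ and then unwinding the right-hand column of diagram (\ref{diagr}). Essentially all the real content is packed into Theorem \ref{mappingspace}; the corollary is a formal wrap-up.

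First I would apply Theorem \ref{mappingspace} with $k=0$ to obtain the surjectivity of the composite
$$
r\circ \iota_\ast\colon \pi_{48}F(E^{hG_{48}},X) \longrightarrow \pi_{48}X \longrightarrow \FF_4.
$$
By the standard smash--hom adjunction, $\pi_{48}F(E^{hG_{48}},X) \cong [\Sigma^{48}E^{hG_{48}},X]$, so any class in the source is genuinely represented by a map $\phi\colon \Sigma^{48}E^{hG_{48}} \to X$.

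Next I would use the commutative diagram (\ref{diagr}) to interpret $r(\iota_\ast(\phi))$. Chasing the right-hand column, the Hurewicz map $H$ sends $\iota_\ast(\phi)\in\pi_{48}X$ into $\Hom_{\GG_2}(E^0[[\GG_2/G_{24}]],E^{-48}) \cong \WW[[j]]\Delta^2 = H^0(G_{24},E_{48})$, and then $\epsilon$ reduces modulo $(2,j)$ to an element of $\FF_4$. By Corollary \ref{describer}, this composite $\epsilon\circ H\colon \pi_{48}X \to \FF_4$ agrees with $r$ up to a (possibly non-trivial) automorphism $\alpha$ of $\FF_4$. Since the intended meaning of $\iota_\ast(\phi)\equiv f(0)\pmod 2$ is precisely that $\epsilon(H(\iota_\ast(\phi))) = f(0)$, the problem reduces to producing $\phi$ with $r(\iota_\ast(\phi)) = \alpha(f(0))$.

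Given the prescribed $f(j)\Delta^2\in H^0(G_{24},E_{48})$, I would simply invoke the surjectivity above to pick such a $\phi$; reading $\epsilon\circ H = \alpha^{-1}\circ r$ backwards then yields $\epsilon(H(\iota_\ast(\phi))) = f(0)$, which is the congruence claimed. The only genuine obstacle is Theorem \ref{mappingspace} itself, whose proof requires the mapping-space analysis of Section \ref{ch:mapping}; once that is in hand, the corollary amounts to nothing more than an adjunction identification together with the diagram chase above.
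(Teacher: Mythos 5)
Your proposal is correct and follows the paper's own route: the paper likewise deduces the corollary from Theorem \ref{mappingspace} at $k=0$, using the adjunction identification of $\pi_{48}F(E^{hG_{48}},X)$ with maps $\Sigma^{48}E^{hG_{48}}\to X$ and the diagram \eqref{diagr} together with Corollary \ref{describer} to identify $\epsilon\circ H$ with $r$ up to a bijection of $\FF_4$, then choosing a preimage of the prescribed value. The only cosmetic difference is that Corollary \ref{describer} gives a group isomorphism of $\FF_4$ rather than a field automorphism, but only bijectivity is used, so nothing changes.
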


We will use this result to show that there is an equivalence $\Sigma^{48}E^{hG_{24}} \to X$. See
Theorem \ref{maintheorem} below.

We now begin the proof of Theorem \ref{mappingspace}. 
Let $p: E^{h\SS_2^1} \to E^{hG_{24}} \times E^{hG_{24}}$ be the projection
from the top to the bottom of the centralizer resolution tower.

\begin{lem}\label{thefirstcalculationredux} Let $k \in \ZZ$. The map
$$
p_\ast:\pi_{192k+45}F(E^{hG_{48}}, E^{h\SS_2^1}) \to \pi_{192k+45} (F(E^{hG_{48}},E^{hG_{24}}) 
\times F(E^{hG_{48}},E^{hG_{24}}))
$$
is surjective.
\end{lem}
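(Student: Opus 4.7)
The plan is to mimic the proof of Lemma \ref{thefirstcalculation} by applying the function spectrum functor $F(E^{hG_{48}},-)$ to the centralizer tower (\ref{cent-tower}). This yields the spectral sequence
$$
E_1^{s,t} = \pi_t F(E^{hG_{48}}, F_s) \Longrightarrow \pi_{t-s} F(E^{hG_{48}}, E^{h\SS_2^1}),
$$
an instance of (\ref{cent-tower-ss}) with $Y = E^{hG_{48}}$, in which $p_\ast$ is the edge homomorphism at $s=0$. As in Lemma \ref{thefirstcalculation}, surjectivity in degree $192k+45$ follows once one knows that the potential targets of outgoing differentials vanish, i.e.
$$
\pi_{192k+45}F(E^{hG_{48}},F_1) = \pi_{192k+46}F(E^{hG_{48}},F_2) = \pi_{192k+47}F(E^{hG_{48}},F_3) = 0,
$$
with the $F_s$ as in (\ref{cent-fibers}).

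To compute these homotopy groups I would invoke the formula (\ref{thegencase-homotopy}): writing $G_{48} = \bigcap_i U_i$ with each $U_i\subseteq \GG_2$ open, we obtain for any finite $F \subseteq \GG_2$
$$
\pi_\ast F(E^{hG_{48}}, E^{hF}) \cong \lim_i \prod_{F \backslash \GG_2/U_i} \pi_\ast E^{hF \cap xU_i x^{-1}}.
$$
The structural input that makes this tractable is that $C_2 = \{\pm 1\}$ is central in $\GG_2$ and lies in $G_{48}$; since $G_{48}\subseteq U_i$, we have $C_2 = xC_2 x^{-1} \subseteq xU_i x^{-1}$ for every $x$ and every $i$. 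Combined with $C_2 \subseteq F$ in all three cases $F = C_2, C_4, C_6$, this forces $F \cap xU_i x^{-1}$ always to contain $C_2$, so the only intersections that can appear are $C_2$, $C_4$, and $C_6$ (the subgroup $C_3 \subseteq C_6$ is excluded because it does not contain $C_2$). Consequently every factor in the product is one of $E^{hC_2}$, $E^{hC_4}$, or $E^{hC_6}$.

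Finally, I would read off the required vanishings directly from Section \ref{ch:coho}: using the 16-, 32-, and 48-periodicity of $E^{hC_2}$, $E^{hC_4}$, $E^{hC_6}$ respectively, together with Propositions \ref{homotopyc2}, \ref{homotopyc4} and \ref{homotopyc6}, each of the residues of $45$, $46$, $47$ modulo the appropriate period falls outside both the bo-pattern residues $\{0,1,2,4\}\pmod 8$ and the finite list of pure $K(2)$-class degrees for that group, so $\pi_t E^{hH} = 0$ for each relevant $(t,H)$. Hence every factor of the product is zero at each finite stage, and both the product and its inverse limit (with no $\lim^1$) vanish, giving the required surjectivity. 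The delicate part of the argument is really the verification that the trivial subgroup never appears as an intersection $F \cap xU_i x^{-1}$ (which would otherwise contribute copies of $\pi_\ast E$, nonzero in even degrees); once centrality of $C_2 \subseteq G_{48}$ takes care of that, everything reduces cleanly to the periodic calculations already organized in Section \ref{ch:coho}.
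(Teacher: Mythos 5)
Your proposal is correct and follows essentially the same route as the paper: apply $F(E^{hG_{48}},-)$ to the centralizer tower, reduce surjectivity of the edge map to the vanishing of $\pi_{192k+45}F(E^{hG_{48}},F_1)$, $\pi_{192k+46}F(E^{hG_{48}},F_2)$, $\pi_{192k+47}F(E^{hG_{48}},F_3)$, and use (\ref{thegencase})--(\ref{thegencase-homotopy}) together with the centrality of $C_2$ (and $C_2\subseteq G_{48}\subseteq U_i$) to ensure every factor is one of $E^{hC_2}$, $E^{hC_4}$, $E^{hC_6}$, whose homotopy vanishes in the relevant degrees by Propositions \ref{homotopyc2}, \ref{homotopyc4}, and \ref{homotopyc6}. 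The only cosmetic difference is that the paper handles $E^{hC_6}$ via the inclusion $\pi_kE^{hC_6}\subseteq \pi_kE^{hC_2}$ rather than by a separate periodicity check.
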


\begin{proof} We apply $F(E^{hG_{48}},-)$ to the centralizer tower and examine the 
resulting spectral sequence in homotopy. See (\ref{cent-tower-ss}). The spectral
sequence reads
$$
\pi_tF(E^{hG_{48}},F_s) \Longrightarrow \pi_{t-s}F(E^{hG_{48}},E^{h\SS_2^1})
$$
and the fibers $F_s$ are described in (\ref{cent-fibers}). Thus we need to know 
\begin{align*}
0&=\pi_{192k+45}F(E^{hG_{48}},E^{hC_6}\vee E^{hC_4})\\
&= \pi_{192k+46}F(E^{hG_{48}},E^{hC_2})\\
&= \pi_{192k+47}F(E^{hG_{48}},E^{hC_6})\ .
\end{align*}
We can use (\ref{thegencase}). Note that $C_2$ is central, so all of the
subgroups $F_x$ contain $C_2$. Therefore, the crucial input is as before:
$$
\pi_kE^{hC_6} \subseteq \pi_kE^{hC_2} = 0
$$
for $k=45$, $46$, and $47$ and that $\pi_{45}E^{hC_4}=0$. See Propositions 
\ref{homotopyc2}, \ref{homotopyc6}, and \ref{homotopyc4}. See also Figure 2.
\end{proof}

Let $q: E^{h\SS_2^1} \to E^{hG_{24}}$ be the projection
from the top to the bottom of the duality resolution tower.
Using Remark \ref{compare-ss} we now can produce a commutative diagram, where we have abbreviated
$F(E^{hG_{48}},Y)$ as $F(Y)$ and we're writing $n=192k+45$.
\begin{equation}\label{bigd}
\xymatrix{
\pi_nF(\Sigma^{-3}X) \rto\dto_r &\pi_nF(E^{h\SS_2^1}) \rto^{=} \dto_{p_\ast}&
\pi_nF(E^{h\SS_2^1}) \dto^{q_\ast}\\
\pi_nF(E^{hG_{24}}) \rto \dto_{\iota_\ast} & \pi_n(F(E^{hG_{24}}) \times F(E^{hG_{24}}))
\rto^-{(g_0)_\ast} \dto_{\iota_\ast} &
\pi_nF(E^{hG_{24}}) \dto_{\iota_\ast}\\
\pi_n E^{hG_{24}} \rto & \pi_n(E^{hG_{24}}\times E^{hG_{24}}) \rto_-{(g_0)_\ast}  &\pi_n E^{hG_{24}}.
}
\end{equation}
The maps labelled $(g_0)_\ast$ are induced from the map $g_0:E^{hG_{24}} \times E^{hG_{24}}
\to E^{hG_{24}}$ discussed in Remark \ref{compare-ss}. It is projection onto the first factor. 
The lower two rows are split short exact, the maps labelled $\iota_\ast$ are all onto,
and the maps $p_\ast$ and $q_\ast$ are onto.  The map $r$ is then defined by the requirement that the upper right 
square commute. It is the analog of the map $r$ of (\ref{diagramr}), and in fact we have a commutative
diagram
$$
\xymatrix{\pi_{192k+45}F(E^{hG_{48}},\Sigma^{-3}X) \rto^-{\iota_\ast} \dto_r & \pi_{192k+45}\Sigma^{-3}X \dto^r\\
\pi_{192k+45}F(E^{hG_{48}},E^{hG_{24}}) \rto^-{\iota_\ast} & \pi_{192k+45}E^{hG_{24}} \cong \FF_4\ .
}
$$
Since $E^{hG_{24}}$ is an $E^{hG_{48}}$-module spectrum, the evaluation map
$$
\iota:F(E^{hG_{48}},E^{hG_{24}}) \to E^{hG_{24}}
$$
is split. Hence, 
Theorem \ref{mappingspace} follows from the next result. 

\begin{lem}\label{ragain} The map
$$
r:\pi_{192k+45}F(E^{hG_{48}},\Sigma^{-3}X) \to \pi_{192k+45}F(E^{hG_{48}},E^{hG_{24}})
$$
is onto.
\end{lem}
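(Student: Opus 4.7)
The plan is to mimic the diagram chase of Proposition~\ref{thecrucialelement} with $F(E^{hG_{48}}, -)$ applied throughout, combining Lemma~\ref{thefirstcalculationredux} (mapping-space surjectivity of $p_\ast$) with an obstruction-theoretic lift through the duality tower (\ref{duality-tower}). Write $n = 192k+45$ and $F(Y) = F(E^{hG_{48}}, Y)$. Given $x \in \pi_n F(E^{hG_{24}})$, I would first use Lemma~\ref{thefirstcalculationredux} to pick $\tilde{x} \in \pi_n F(E^{h\SS_2^1})$ with $p_\ast(\tilde{x}) = (0, x) \in \pi_n(F(E^{hG_{24}}) \times F(E^{hG_{24}}))$. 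Since $g_0$ is projection onto the first factor and $q = g_0 \circ p$ (Remark~\ref{compare-ss}), this forces $q_\ast(\tilde{x}) = 0$. It then remains to lift $\tilde{x}$ along the fiber sequence $\Sigma^{-3}X \to E^{h\SS_2^1} \to Z_2$ to a class $y \in \pi_n F(\Sigma^{-3}X)$; by the defining diagram of $r$ any such lift will automatically satisfy $r(y) = x$, giving the desired surjectivity.

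The lift exists iff the image of $\tilde{x}$ in $\pi_n F(Z_2)$ vanishes. Walking down through the remaining fiber sequences $\Sigma^{-2}E^{hC_6} \to Z_2 \to Z_1$ and $\Sigma^{-1}E^{hC_6} \to Z_1 \to E^{hG_{24}}$ and exploiting $q_\ast(\tilde{x}) = 0$, the obstructions to this vanishing live in $\pi_{n+1} F(E^{hC_6})$ and $\pi_{n+2} F(E^{hC_6})$, so the task reduces to showing both of these groups vanish.

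This is the computational heart of the argument and where I expect the main obstacle to lie. Via (\ref{thegencase-homotopy}) with $F_1 = G_{48}$ and $F_2 = C_6$, the group $\pi_t F(E^{hG_{48}}, E^{hC_6})$ is assembled from $\pi_t E^{hF_x}$ for subgroups $F_x \subseteq C_6$, each of which contains the central $C_2 = \{\pm 1\}$; hence $F_x \in \{C_2, C_6\}$. The plan is to invoke Propositions~\ref{homotopyc2} and~\ref{homotopyc6} and check that $\pi_{46}$ and $\pi_{47}$ of both $E^{hC_2}$ and $E^{hC_6}$ vanish: the degrees $46 \equiv 6$ and $47 \equiv 7 \pmod 8$ lie outside the $bo$-pattern degrees $\{0,1,2,4\} \pmod 8$, and a direct inspection of the generators of the pure $K(2)$-classes modulo the $16$-periodicity for $E^{hC_2}$ and the $48$-periodicity for $E^{hC_6}$ rules out all remaining contributions. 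Once this degree count is in hand, the proof closes with the formal diagram chase described above, in complete analogy with Section~\ref{ch:homotopy}.
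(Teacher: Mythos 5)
Your proposal is correct and follows essentially the same route as the paper: the paper's proof is exactly this diagram chase in \eqref{bigd} through the duality tower applied to $F(E^{hG_{48}},-)$, using the surjectivity of $p_\ast$ from Lemma \ref{thefirstcalculationredux}, the resulting vanishing of $q_\ast(\tilde x)$, and the vanishing of $\pi_{192k+46}$ and $\pi_{192k+47}$ of $F(E^{hG_{48}},E^{hC_6})$ via \eqref{thegencase} and Propositions \ref{homotopyc2} and \ref{homotopyc6}. Your degree checks (including that every $F_x$ contains the central $C_2$, so only $E^{hC_2}$ and $E^{hC_6}$ contribute) match the paper's argument.
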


\begin{proof} The proof is an exact copy of the first part of the argument for Proposition 
\ref{thecrucialelement}, generalized to mappings spaces; that is, we examine the homotopy
spectral sequence built from the duality tower for $F(E^{hG_{48}},E^{h\SS_2^1})$.
In the diagram (\ref{bigd}), the map $q_\ast$ is onto, and any element
$x$ in the kernel of $(g_0)_\ast$ must have filtration at least $1$ in the homotopy spectral
sequence of the duality tower. Since $\pi_kF(E^{hG_{48}},E^{hC_6})=0$ for $k=46$ and $k=47$,
using (\ref{thegencase}) and Propositions 
\ref{homotopyc2} and \ref{homotopyc6},
any such element must be the image of class from $\pi_\ast F(E^{hG_{48}},\Sigma^{-3}X)$.
\end{proof}

\begin{rem}\label{instead} Note that all of these arguments would work with replacing
$E^{hG_{48}}$ with $E^{hG_{24}}$.
\end{rem}

The next result is our main theorem.

\begin{thm}\label{maintheorem} There is an equivalence 
$$
\Sigma^{48}E^{hG_{24}} \mathop{\longr}^{\simeq} X
$$
realizing the given isomorphism of Morava modules
$$
E_\ast E^{hG_{24}} \mathop{\longr}^{\cong} E_\ast X.
$$
\end{thm}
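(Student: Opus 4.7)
The plan is to realize $\Phi\colon \Sigma^{48}E^{hG_{24}} \to X$ by assembling two maps out of $\Sigma^{48}E^{hG_{48}}$ through Galois descent, and then to verify that $\Phi$ is an $E_\ast$-iso using pro-free Nakayama. First, Corollary \ref{describer} (for $k=0$) gives homotopy classes $x_{0,1},x_{0,2}\in\pi_{48}X$ detected in $H^0(G_{24},E_{48})\cong \WW[[j]]\Delta^2$ by classes $f_i(j)\Delta^2$ with $f_1(0),f_2(0)$ an $\FF_2$-basis of $\FF_4$. Applying Corollary \ref{mapping2} to each $f_i$ produces maps $\phi_i\colon \Sigma^{48}E^{hG_{48}}\to X$ with $\iota_\ast(\phi_i)\equiv f_i(0)$ modulo $(2,j)$. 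Then by Lemma \ref{more-split}, applied to $K=G_{48}$ and $K_0=G_{24}$, there is an equivalence $\Gal^+\wedge E^{hG_{48}}\simeq E^{hG_{24}}$, so
\[
[\Sigma^{48}E^{hG_{24}},X]\cong [\Sigma^{48}E^{hG_{48}},X]^{\oplus 2},
\]
and the pair $(\phi_1,\phi_2)$ assembles into a map $\Phi\colon\Sigma^{48}E^{hG_{24}}\to X$.

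To show $\Phi$ is an equivalence, it suffices by Remark \ref{vanishing-lines-1} to check that $E_\ast\Phi$ is an isomorphism of Morava modules. Both $E_\ast\Sigma^{48}E^{hG_{24}}$ and $E_\ast X$ are isomorphic as Morava modules to $\map(\GG_2/G_{24},E_\ast)$ (up to the trivial degree-48 periodicity shift on $E_\ast$), and both are pro-free. By the pro-free Nakayama criterion recalled before Remark \ref{vanishing-lines} (Proposition A.13 of \cite{HvStr}), a continuous map between pro-free $L$-complete modules is an isomorphism iff it is so modulo $\mm$. So the task is reduced to checking that $\Phi_\ast$ modulo $\mm$ is an isomorphism. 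Here the decomposition provided by Lemma \ref{more-split} together with Lemma \ref{coh-decomp-galois} shows that the $\FF_4$-vector space of $\GG_2$-equivariant Morava module maps in the relevant bidegree (which receives $\Phi_\ast$) is precisely the $2$-dimensional $\FF_4$-vector space identified by Corollary \ref{describer} via the edge homomorphism, and $\Phi_\ast$ modulo $\mm$ sends the two distinguished generators coming from the $\Gal$-splitting to $f_1(0)$ and $f_2(0)$. Linear independence of $f_1(0),f_2(0)$ over $\FF_2$ (equivalently, spanning $\FF_4$) then forces $\Phi_\ast$ to be an isomorphism, and the comparison via the edge homomorphism ensures that it realizes the isomorphism of Morava modules given by the duality resolution.

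The main obstacle is the last comparison in Step 3: one must verify that the mod-$\mm$ reduction of $\Phi_\ast$ is genuinely controlled by the pair $(f_1(0),f_2(0))$, rather than, for example, only detecting a single $\FF_2$-rank of information after Galois folding. This will require careful bookkeeping of the double-coset decomposition of $G_{48}\backslash\GG_2/G_{24}$ used to compute $\pi_{48}F(E^{hG_{48}},X)$ via (\ref{thegencase-homotopy}), matching the two summands in $[\Sigma^{48}E^{hG_{24}},X]\cong [\Sigma^{48}E^{hG_{48}},X]^{\oplus 2}$ with the two $\Gal$-conjugate basis vectors that appear in $\WW/2\cong\FF_4$. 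Once this matching is made, the sought Morava module isomorphism follows from the faithfully flat descent recorded in Lemma \ref{coh-decomp-galois}, and hence $\Phi$ is an equivalence realizing the isomorphism $E_\ast E^{hG_{24}}\cong E_\ast X$ of Proposition \ref{dual-res}.
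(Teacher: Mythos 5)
Your overall strategy is the same as the paper's: produce two maps $\phi_1,\phi_2\colon \Sigma^{48}E^{hG_{48}}\to X$ from Corollary \ref{mapping2}, assemble them through the splitting $E^{hG_{24}}\simeq \Gal^+\wedge E^{hG_{48}}\simeq E^{hG_{48}}\vee E^{hG_{48}}$ of Lemma \ref{more-split}, and then verify an $E$-(co)homology isomorphism by a Nakayama argument. But there is a genuine gap at the decisive step, and you flag it yourself without closing it: the claim that the mod-$\mm$ reduction of $\Phi_\ast$ is ``precisely the $2$-dimensional $\FF_4$-vector space identified by Corollary \ref{describer}'' is not justified as stated. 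Over $E_0$, the reduction $E_\ast X/\mm E_\ast X \cong \map(\GG_2/G_{24},\FF_4)\otimes_{\FF_4} (\FF_4[u^{\pm 1}])$ is infinite dimensional, so Proposition A.13 of \cite{HvStr} alone does not convert the problem into a rank-two linear algebra check; and the edge-homomorphism data of Corollary \ref{describer} only records where $\iota_\ast\phi_i$ goes, not the full Morava module map. The paper closes exactly this hole by switching to cohomology and to a smaller group ring: using $\SS_2\cong K\rtimes G_{24}$ (Remark \ref{all-the-splittings}) one has an isomorphism of $K$-sets $\GG_2/G_{24}\cong K\sqcup K\phi$, so $E^\ast X\cong E^\ast[[\GG_2/G_{24}]]$ and $E^\ast\bigl(\Sigma^{48}(E^{hG_{48}}\vee E^{hG_{48}})\bigr)$ are \emph{free of rank two} over the complete local ring $E^0[[K]]$, and Nakayama over $E^0[[K]]$ (Lemma 4.3 of \cite{GHMR}) reduces everything to the invertibility of a $2\times 2$ matrix over $\FF_4=E^0[[K]]/\mm_K$.

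Moreover, identifying that matrix is where the Galois bookkeeping you defer actually matters: tracing $E^0(\phi_i)$ through the restriction to $K$-modules, the second coordinate of $\GG_2/G_{24}\cong K\sqcup K\phi$ contributes the Frobenius conjugate, so the matrix is
$$
\begin{pmatrix} f_1(0) & \phi(f_1(0)) \\ f_2(0) & \phi(f_2(0)) \end{pmatrix}
=\begin{pmatrix} f_1(0) & f_1(0)^2 \\ f_2(0) & f_2(0)^2 \end{pmatrix}.
$$
The paper avoids any further discussion by \emph{choosing} the maps (via the surjectivity in Theorem \ref{mappingspace}) so that $f_i(0)=\omega^i$, giving determinant $\omega+\omega^2=1$. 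Your weaker input, that $f_1(0),f_2(0)$ is only an $\FF_2$-basis of $\FF_4$, does in fact suffice, since the determinant is $f_1(0)f_2(0)\bigl(f_1(0)+f_2(0)\bigr)\neq 0$; but this computation, together with the identification of the Frobenius twist in the second column, is precisely what rules out the ``Galois folding to a single $\FF_2$-rank'' scenario you raise, and it is absent from your argument. As written, your proposal reproduces the setup of the paper's proof but leaves its essential content -- the free rank-two $E^0[[K]]$-module structure and the explicit Frobenius-twisted matrix -- as an unproved assertion.
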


\begin{proof} We actually use the given isomorphism of Morava modules to produce
a (non-equivariant) equivalence
$$
C_2^+ \wedge \Sigma^{48}E^{hG_{48}}
 \simeq \Sigma^{48}(E^{hG_{48}} \vee E^{hG_{48}}) \to X.
$$
Here $C_2 = \Gal = \Gal(\FF_4/\FF_2)$. Then we will apply Lemma \ref{more-split}. 

We begin with some algebra. Recall from Remark \ref{all-the-splittings} that
the $2$-Sylow subgroup $S_2 \subseteq \SS_2$
can be decomposed as $K \rtimes Q_8$. Since $G_{24} = Q_8 \rtimes \FF_4^\times$ and
$G_{48} = G_{24} \rtimes \Gal$ we have that $E^\ast[[\GG_2/G_{24}]]$
and $E^\ast[[\GG_2/G_{48}]]$ are free $E^\ast[[K]]$ modules of rank $2$ and 
rank $1$ respectively. Since $K$ is a finitely generated pro-$2$-group, the
ring $E^0[[K]]$ is a complete local ring with maximal ideal
$\mm_K$ given by the kernel of the reduced augmentation
$$
E^0[[K]] \longr E^0 \longr \FF_4.
$$
We will produce a map
$$
f :   \Sigma^{48}(E^{hG_{48}} \vee E^{hG_{48}})  \to X
$$
so that the map of $E^0[[K]]$-modules
$$
E^{\ast}f: E^{48}X \longr E^{48}(\Sigma^{48}(E^{hG_{48}} \vee E^{hG_{48}}))
$$
is an isomorphism
modulo $\mm_K$. Then, by the appropriate variant of Nakayama's Lemma (see Lemma 4.3 of \cite{GHMR}),
it will be an isomorphism of $E^0[[K]]$-modules and, hence, of $E^0[[\GG_2]]$-modules, as required.

Since $G_{24}\cong Q_8\rtimes \mathbb{F}_4^{\times}$ and
$\mathbb{G}_2\cong((K\rtimes Q_8)\rtimes \mathbb{F}_4^{\times})\rtimes \Gal,$ we have an isomorphism
of  $K$-sets $\mathbb{G}_2/{G_{24}} \cong K\sqcup  K\phi$ where $\phi$ is the Frobenius in the Galois group. 
Then we have the following commutative diagram. It is an expansion of the diagram \eqref{diagr}.

$$
\xymatrix@C=15pt{
\Hom_{\GG_2}(E^0[[\GG_2/G_{24}]],E^{-48}[[\GG_2/G_{48}]]) \rto
 \dto_{\iota^\ast} & \Hom_K(E^0[[K]]\oplus E^0[[K \phi]],E^{-48}[[K]]) \dto^{\iota^\ast}\\
\Hom_{\GG_2}(E^0[[\GG_2/G_{24}]],E^{-48}) \rto \dto_\cong & \Hom_K(E^0[[K]]\oplus E^0[[K \phi]],E^{-48}) \dto^{\subseteq}\\
(E_{48})^{G_{24}} \cong \WW[[j]] \rto \dto & E^2_{48} \cong (\WW[[u_1]])^2 \dto\\
\FF_4 \rto &\FF_4^2\ .
}$$
The top two horizontal maps are forgetful maps, remembering only the $K$ action. The third horizontal map 
is the composition
$$
\xymatrix{
\WW[[j]] \rto^-\subseteq &\WW[[u_1]] \rto^-{1 \times \phi} & (\WW[[u_1]])^2.
}
$$
and the bottom map is $x \mapsto (x,\phi(x))$. 
The top vertical maps are induced by the
map $E^\ast(E^{hG_{48}}) \to E^\ast S^0$ given by the unit, the middle vertical map evaluates a
homomorphism at $1 \in E^0[[\GG_2/G_{24}]]$; note we are again
using that $E^{-48} \cong E_{48}$. The final map is reduction modulo the maximal
ideals in both cases. 

By Corollary \ref{mapping2} we can produce two maps
$$
f_i:\Sigma^{48}E^{hG_{48}} \to X,\quad i=1,2
$$
so that $(f_i)_\ast (\iota) \equiv \omega^i\Delta^2$ modulo $(2,j),$ where $\omega \in \mathbb{F}_4$ is the primitive cube root of unity. We now examine the fate
of
$$
E^0(f_i):E^0[[\GG_2/G_{24}]] \longr E^{-48}[[\GG_2/G_{48}]]
$$
as we work from the upper left to the bottom right of this diagram. Using the formulas of the previous paragraph we have
$$
E^0(f_1) \mapsto (\omega,\omega^2)\quad\mathrm{and}\quad E^0(f_2) \mapsto (\omega^2,\omega).
$$

Finally, let
$$
f = f_1 \vee f_2:  \Sigma^{48}(E^{hG_{48}} \vee E^{hG_{48}})  \to X.
$$
If we apply $E^\ast(-)$ to this map we get a map
$$
E^\ast f: E^0[[\GG_2/G_{24}]] \to E^{-48}[[\GG_2/G_{48}]] \times E^{-48}[[\GG_2/G_{48}]]
$$
which yields a map of $K$-modules
$$
E^\ast f: E^0[[K]]^2 \longr E^{-48}[[K]]^2
$$
which, modulo the maximal ideal $\mm_K$, gives the map
$$
\FF_4^2 \longr \FF_4^2
$$
given by the matrix
$$
\left( \begin{array}{ll}
\omega&\omega^2\\
\omega^2&\omega
\end{array}
\right)
$$
with determinant $\omega^2 + \omega = 1$. Thus $E^\ast f$ is an isomorphism, as needed. 
\end{proof}

We can now complete the proof of Theorem \ref{main} and construct the topological duality resolution.
Proposition  \ref{dual-res} and  Theorem  \ref{maintheorem} imply the following. Recall from
Theorem \ref{homotopyc6} that $E^{hC_6}$ is $48$-periodic. 

\begin{cor}
 There exists a resolution of $E^{h\SS_2^1}$ in the $K(2)$-local category at the prime 2
$$
E^{h\mathbb{S}_2^1} \to E^{hG_{24}} \to E^{hC_6} \to  \Sigma^{48}E^{hC_6} \to \Sigma^{48}E^{hG_{24}}\ .
$$
\end{cor}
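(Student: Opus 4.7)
The plan is to assemble the corollary directly from Proposition \ref{dual-res} together with Theorem \ref{maintheorem}; at this stage essentially all the work has been done, and only a bookkeeping argument remains.

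First I would invoke Proposition \ref{dual-res}, which produces a four-term resolution
$$
E^{h\SS_2^1} \longrightarrow E^{hG_{24}} \longrightarrow E^{hC_6} \longrightarrow E^{hC_6} \longrightarrow X
$$
in which all successive compositions are null and all the relevant Toda brackets vanish modulo indeterminacy, together with an isomorphism of Morava modules $E_\ast X \cong E_\ast E^{hG_{24}}$. The discussion in Remark \ref{start-of-the-proof} then refines this sequence to a tower of fibrations realizing the resolution; in particular, the fact that all Toda brackets are trivial modulo indeterminacy is exactly what is needed for a genuine tower, which is the content of the word \emph{resolution} as used in the statement of Theorem \ref{main}.

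Next I would apply Theorem \ref{maintheorem}, which provides a weak equivalence $\Sigma^{48}E^{hG_{24}}\xrightarrow{\simeq} X$ realizing the chosen isomorphism $E_\ast E^{hG_{24}}\cong E_\ast X$. Substituting this equivalence into the final term of the resolution gives a sequence
$$
E^{h\SS_2^1} \longrightarrow E^{hG_{24}} \longrightarrow E^{hC_6} \longrightarrow E^{hC_6} \longrightarrow \Sigma^{48}E^{hG_{24}},
$$
and since $\Sigma^{48}E^{hG_{24}} \simeq X$ the properties of compositions and Toda brackets pass through unchanged.

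Finally, to obtain the symmetric form stated in the corollary, I would use that $E^{hC_6}$ is $48$-periodic (Proposition \ref{homotopyc6}), so that a choice of periodicity generator gives an equivalence $E^{hC_6}\simeq \Sigma^{48}E^{hC_6}$. Applying this equivalence to the third term rewrites the resolution as
$$
E^{h\SS_2^1} \longrightarrow E^{hG_{24}} \longrightarrow E^{hC_6} \longrightarrow \Sigma^{48}E^{hC_6} \longrightarrow \Sigma^{48}E^{hG_{24}},
$$
as desired. There is no real obstacle here; the only minor care needed is to verify that the suspension reindexing does not disturb the vanishing of Toda brackets, which is immediate because suspension is an auto-equivalence of the $K(2)$-local category. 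All the genuine difficulty lay in proving Proposition \ref{dual-res} and Theorem \ref{maintheorem}; the corollary itself is just a repackaging.
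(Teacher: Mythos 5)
Your proposal is correct and matches the paper's own argument: the corollary is deduced immediately by combining Proposition \ref{dual-res} with the equivalence $\Sigma^{48}E^{hG_{24}}\simeq X$ of Theorem \ref{maintheorem}, and the $\Sigma^{48}$ on the third term is just the $48$-periodicity of $E^{hC_6}$ from Proposition \ref{homotopyc6}, inserted for symmetry. Nothing further is needed.
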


\begin{rem}\label{notsphere} In \cite{GHMR}, working at the prime $3$, the authors were able to
produce a topological resolution for $L_{K(2)}S^0 = E^{h\GG_2}$ itself by the same methods
that produced the resolution for $E^{h\GG_2^1}$. The resolution for the sphere was essentially 
a double of that for $E^{h\GG_2^1}$. Not only do the methods of \cite{GHMR} not apply to the
case $p=2$, it's very unlikely that there is a topological resolution of the sphere, or 
for $E^{h\SS_2}$, which could be obtained by doubling the resolution of $E^{h\SS_2^1}$. There
are any number of difficulties, but the first obstacle is that we have only a semi-direct decomposition
$\SS_2^1 \rtimes \ZZ_2 \cong \SS_2$ at $p=2$, rather than the product decomposition $\GG_2^1 \times \ZZ_3 
\cong \GG_2$ at the prime $3$. This makes the algebra much harder, and it only gets worse from there.
Other short topological resolutions are possible, of course, and could be very instructive. This is
the subject of current research by Agn\`es Beaudry and Hans-Werner Henn.
\end{rem}

\bibliographystyle{alpha}

\bibliography{bib2}

\vspace{-0.2cm}
\end{document}